\title[Exotic $t$-structures for two-block Springer fibres]{Exotic $t$-structures for two-block Springer fibres}
\author{Rina Anno}
\email{ranno@mit.edu}
\address{Massachusetts Institute of Technology, Department of Mathematics, 77 Massachusetts Avenue, Cambridge, MA 02139-4307}
\author{Vinoth Nandakumar}
\email{vinoth@math.utah.edu}
\address{University of Utah, Department of Mathematics, 155 S 1400 E, Salt Lake City, Utah, 84102}
\newtheorem*{theorem*}{Theorem}
\newtheorem{theorem}{Theorem}[section]
\newtheorem{lemma}[theorem]{Lemma}
\newtheorem{corollary}[theorem]{Corollary}
\newtheorem{proposition}[theorem]{Proposition}
\theoremstyle{definition}
\newtheorem{definition}[theorem]{Definition}
\newtheorem{remark}[theorem]{Remark}
\newtheorem{example}[theorem]{Example}
\begin{document}

\dedicatory{Dedicated to our teacher, Roman Bezrukavnikov}
\begin{abstract} We study the exotic t-structure on $\mathcal{D}_n$, the derived category of coherent sheaves on two-block Springer fibre (i.e. for a nilpotent matrix of type $(m+n,n)$ in type $A$). The exotic t-structure has been defined by Bezrukavnikov and Mirkovic for Springer theoretic varieties in order to study representations of Lie algebras in positive characteristic. Using work of Cautis and Kamnitzer, we construct functors indexed by affine tangles, between categories of coherent sheaves on different two-block Springer fibres (i.e. for different values of $n$). After checking some exactness properties of these functors, we describe the irreducible objects in the heart of the exotic t-structure on $\mathcal{D}_n$ and enumerate them by crossingless $(m,m+2n)$ matchings. We compute the $\text{Ext}$'s between the irreducible objects, and show that the resulting algebras are an annular variant of Khovanov's arc algebras. In subsequent work we will make a link with annular Khovanov homology, and use these results to give a characteristic $p$ analogue of some categorification results using two-block parabolic category $\mathcal{O}$ (by Bernstein-Frenkel-Khovanov, Brundan, Stroppel, et al).  
\end{abstract}

\maketitle

\let\thefootnote\relax\footnote{{\it 2010 Mathematics Subject Classification} 14F05 (17B10)\\
{\it Keywords}: affine braid group, categorical representation, Khovanov homology, spherical functors, Springer fibers,
tangles, modular representation theory}

\section{Introduction}

Let $G$ be a semi-simple Lie group, with Lie algebra $\mathfrak{g}$, flag variety $\mathcal{B}$ and nilpotent cone $\mathcal{N}$. It is well-known that there is a natural map $\pi: T^* \mathcal{B} \rightarrow \mathcal{N}$ which is a resolution of singularities (known as the Springer resolution). Given $e \in \mathcal{N}$, let $\mathcal{B}_e = \pi^{-1}(e)$; these varieties are known as Springer fibers, and are of special interest in representation theory. For instance, in type $A$, Springer showed that the top cohomology of a Springer fiber can be equipped with a representation of the Weyl group, and further realizes an irreducible representation. 

This special case when $G=SL(m+2n)$, and the nilpotent $e$ has Jordan type $(m+n, n)$, is easier to understand, and has been studied extensively. In \cite{sw}, Stroppel and Webster study the geometry and combinatorics of these ``two-block Springer fibers" and investigate connections with Khovanov's arc algebras. In \cite {hr}, Russell studies the topology of these varieties, and describes a certain basis in the Springer representation.

In \cite{bm}, Bezrukavnikov and Mirkovic introduce ``exotic t-structures" on derived categories of coherent sheaves on Springer theoretic varieties, in order to study the modular representation theory of $\mathfrak{g}$. These exotic $t$-structures are defined using a certain action of the affine braid group $\mathbb{B}_{aff}$ on these categories, which was defined by Bezrukavnikov and Riche (see \cite{br}). 

More precisely, let $\textbf{k}$ be an algebraically closed field of characteristic $p$ with $p > h$ (here $h$ is the Coxeter number), and let $\mathfrak{g}$ be an arbitrary reductive group defined over $\textbf{k}$. Let $\lambda \in \mathfrak{h}_{\textbf{k}}$ be integral and regular; and let $e \in \mathcal{N}(\textbf{k})$ be a nilpotent. Let $\text{Mod}^{fg, \lambda}_{e}(\text{U}\mathfrak{g})$ be the category of modules with generalized central character $(\lambda, e)$. Theorem $5.3.1$ from \cite{bmr} (see also Section $1.6.2$ from \cite{bm}) states that there is an equivalence: \begin{equation} \label{bmrequiv} D^b(\text{Coh}_{\mathcal{B}_{e, \textbf{k}}}(\widetilde{\mathfrak{g}}_{\textbf{k}})) \simeq D^b(\text{Mod}^{fg, \lambda}_{e}(\text{U} \mathfrak{g}_{\textbf{k}})) \end{equation}

Further, it is proven that the tautological $t$-structure on the derived category of modules, corresponds to the exotic $t$-structure on the derived category of coherent sheaves.

Here we will study exotic $t$-structures for the case of two-block Springer fibers in type $A$ (ie. for a nilpotent of Jordan type $(m+n, n)$), give a description of the irreducible objects in the heart of the t-structure, and the Ext spaces between these irreducibles. 

In \cite{bs}, Brundan and Stroppel show that the principal block of parabolic category $\mathcal{O}^{\mathfrak{p}}$, for the parabolic $\mathfrak{p}$ with Levi $\mathfrak{gl}_m \oplus \mathfrak{gl}_n$ inside $\mathfrak{gl}_{m+n}$, is governed by a diagram algebra that is closely related to Khovanov's arc algebra. Further, work by Bernstein-Frenkel-Khovanov (see \cite{bfk}) and Stroppel (see \cite{s}) shows that Reshetikhin-Turaev invariants for $\mathfrak{sl}_2$, indexed by linear tangles, may be categorified by certain functors between these categories. Using the machinery developed in this paper, in future work we will give a characteristic $p$ analogue of this story; the category $\text{Mod}^{fg, \lambda}_{e}(\text{U}\mathfrak{g})$ can be thought of as a characteristic $p$ analogue of $\mathcal{O}^{\mathfrak{p}}$. While the former construction (in \cite{bfk}) naturally gives rise to Khovanov homology, in the characteristic $p$ setting one will obtain annular Khovanov homology (which was developed by Grigsby, Licata and Wehrli in \cite{glw}). See section $6.2$ and $6.3$ for more details.

Now let us describe the contents of this paper in more detail. 

\subsection{Two-block Springer fibers} In Section 1, we recall the definition and some properties of two-block Springer fibers, and define the categories that we will be studying. Let $m \geq 0$ be fixed; and let $n \in \mathbb{Z}_{\geq 0}$ vary. Consider the Lie algebra $\mathfrak{g} = \mathfrak{sl}_{m+2n}$, and denote the nilpotent cone of $\mathfrak{sl}_{m+2n}$ (the variety consisting of nilpotent matrices of size $m+2n$) by $\mathcal{N}_n$. Denote by $z_n$ the standard nilpotent of type $(m+n, n)$: \begin{align*} z_n &= \left( \begin{array}{cccccccc}
� & 1 & � & � & � & � & � & � \\
� & � & \cdots & � & � & � & � & � \\
� & � & � & 1 & � & � & � & � \\
0 & 0 & \cdots & 0 & 0 & 0 & \cdots  & 0 \\
� & � & � & � & � & 1 & � & � \\
� & � & � & � & � & � & \cdots & � \\
� & � & � & � & � & � & � & 1 \\
0 & 0 & \cdots & 0 & 0 & 0 & \cdots & 0
\end{array} \right) \end{align*}
Let $\mathcal{B}_n$ be the flag variety for $GL_{m+2n}$. The Springer resolution is $T^* \mathcal{B}_n$: 
\begin{align*} \mathcal{B}_n &= \{ (0 \subset V_1 \subset \cdots \subset V_{m+2n}=\mathbb{C}^{m+2n})\ | \text{ dim }V_i = i  \} \\ T^* \mathcal{B}_n &= \{ (0 \subset V_1 \subset \cdots \subset V_{m+2n}=\mathbb{C}^{m+2n}, x)\ |\ x \in \mathfrak{sl}_{m+2n}, xV_i \subset V_{i-1} \} \\ \end{align*} 

The natural projection $\pi_n: T^* \mathcal{B}_n \rightarrow \mathcal{N}_n$ is a resolution of singularities. The two-block Springer fiber is the variety $$\mathcal{B}_{z_n} = \pi_n^{-1}(z_n) = \{ (0 \subset V_1 \subset \cdots \subset V_{m+2n}) \in \mathcal{B}_n \  | \  z_n V_{i} \subseteq V_{i-1} \}$$
The Mirkovic-Vybornov transverse slices $S_n \subset \mathfrak{g}$ is a variant of the Slodowy slice. The following variety is of interest, since it is a resolution of $S_n \cap \mathcal{N}$. 
\begin{align*} U_n &= \pi_n^{-1}(S_n) \subset T^* \mathcal{B}_n \\ &= \{ (0 \subset V_1 \subset \cdots \subset V_{m+2n}=\mathbb{C}^{m+2n}, x)\ |\ x \in S_n, xV_i \subset V_{i-1} \} \end{align*}
Let $\mathcal{D}_n = D^b(\text{Coh}_{\mathcal{B}_{z_n}}(U_n))$ be the bounded derived category of coherent sheaves on $U_n$, which are supported on $\mathcal{B}_{z_n}$. These are the categories that we will be studying.

\subsection{Affine tangles} In Section $2$, we recall the definition, and some properties, of affine tangles. 

\textbf{Definition.} Let $p, q$ be positive integers of the same parity. A $(p,q)$ affine tangle is an embedding of $\frac{p+q}{2}$ arcs and a finite number of circles into the region $\{ (x,y) \in \mathbb{C} \times \mathbb{R} | 1 \leq |x| \leq 2 \}$, such that the end-points of the arcs are $(1,0),(\zeta_p, 0), \cdots , (\zeta_p^{p-1}, 0), (2, 0), (2\zeta_q,0), \cdots, (2 \zeta_q^{q-1},0)$ in some order (where $\zeta_k = e^{\frac{2\pi i}{k}}$).

\textbf{Definition.} Let \textbf{ATan} be the category with objects $\{ k \}$ for $k \in \mathbb{Z}_{\geq 0}$, and the morphisms between $p$ and $q$ consist of all affine $(p, q)$ tangles (up to isotopy). 

The above definition is consistent, since a $(p,q)$ affine tangle $\alpha$, and a $(q,r)$ affine tangle $\beta$, then $\beta \circ \alpha$ is a $(p,r)$ affine tangle. 

We recall the well-known presentation of this category using generators and relations. The generators consist of ``cups", $g_n^i$, which are $(n-2, n)$ tangles; ``caps", $f_n^i$, which are $(n, n-2)$ tangles, ``crossings", $t_n^i(1), t_n^i(2)$ and rotations $r_n$, $r_n'$, which are $(n,n)$ tangles. The relations are listed in Definition \ref{DefinitionATan}. In this paper we work with the category $\textbf{AFTan}$ of affine framed tangles that has additional generators $w_n^i(1)$ and $w_n^i(2)$ that twist the framing of the $i$th strand.

\subsection{Functors associated to affine tangles} \quad

\textbf{Definition.} Let $\textbf{AFTan}_m$ be the full subcategory of \textbf{AFTan}, containing the objects $\{ m+2n \}$ for $n \in \mathbb{Z}_{\geq 0}$. A ``weak representation" of the category $\textbf{AFTan}_m$ is an assignment of a triangulated category $\mathcal{C}_n$ for each $n \in \mathbb{Z}_{\geq 0}$, and a functor $\Psi(\alpha): \mathcal{D}_p \rightarrow \mathcal{D}_q$ for each affine framed $(m+2p, m+2q)$ tangle $\alpha$, such that the relations between tangles hold for these functors: i.e. if $\beta$ is an $(m+2q, m+2r)$-tangle, then there is an isomorphism $\Psi(\beta) \circ \Psi(\alpha) \simeq \Psi(\beta \circ \alpha)$. 

The main result of this section is a construction of a weak representation of $\textbf{AFTan}_m$ using the categories $\mathcal{D}_n$ above. To do this, we mimic the strategy used by Cautis and Kamnitzer in \cite{ck}, where they construct a weak representation of the category $\textbf{OTan}$ of oriented (non-affine) tangles, using slightly larger categories. 

\subsection{The exotic $t$-structure on $\mathcal{D}_n$} In Section $5$, we recall the definition of exotic $t$-structures (introduced by Bezrukavnikov and Mirkovic in \cite{bm}), and describe how they are related to the action of affine tangles constructed above. 

Let $\mathbb{B}_{aff}$ be the affine braid group. As a special case of the construction in Section $1$ of \cite{bm} (see also Bezrukavnikov-Riche, \cite{br}), we have an action of $\mathbb{B}_{aff}$ on $\mathcal{D}_n$ (ie. for every $b \in \mathbb{B}_{aff}$, there exists a functor $\Psi(b): \mathcal{D}_n \rightarrow \mathcal{D}_n$, and an isomorphism $\Psi(b_1 b_2) \simeq \Psi(b_1) \circ \Psi(b_2)$ for $b_1, b_2 \in \mathbb{B}_{aff}$). It turns out that $\mathbb{B}_{aff}$ can be identified as a subgroup of the monoid of $(m+2n, m+2n)$-tangles; and under this identification, the action of $\mathbb{B}_{aff}$ coincides with the action constructed above. 

Let $\mathbb{B}_{aff}^+ \subset \mathbb{B}_{aff}$ be the semigroup generated by the lifts of the simple reflections $\tilde{s}_{\alpha}$ in the Coxeter group $W_{aff}^{Cox}$. Bezrukavnikov-Mirkovic's construction in \cite{bm} specializes to give an exotic $t$-structure on $\mathcal{D}_n$, which is defined as follows: 
\begin{align*} \mathcal{D}_n^{\geq 0} &= \{ \mathcal{F} \  | \  R \Gamma(\Psi(b^{-1})\mathcal{F}) \in D^{\geq 0}(\text{Vect} ) \  \forall \  b \in \mathbb{B}_{aff}^+ \} \\ \mathcal{D}_n^{\leq 0} &= \{ \mathcal{F} \  | \  R \Gamma(\Psi(b) \mathcal{F}) \in D^{\leq 0}(\text{Vect} ) \  \forall \  b \in \mathbb{B}_{aff}^+ \} \\ \end{align*} We also prove that the ``cup" functors $\Psi(g_{n}^i)$ are exact with the exotic $t$-structures, and send irreducible objects to irreducible objects (Theorem \ref{irred}).

\subsection{Irreducible objects in the heart of the exotic t-structure on $\mathcal{D}_n$} In Section $6$, we give a description of the irreducible objects in the exotic $t$-structure on $\mathcal{D}_n$, and compute the \text{Ext} spaces between them. 

Let $\text{Cross}(m,n)$ be the set of affine $(m, m+2n)$ tangles, where the $m$ inner points are not labelled, the $m+2n$ outer points are labelled, and whose vertical projections to $\mathbb{C}$ do not have crossings. For every $\alpha\in Cross(n)$ we have a functor $\Psi(\alpha): \mathcal{D}_0 \rightarrow \mathcal{D}_n$; let $\Psi_{\alpha} = \Psi(\alpha) (\underline{\mathbb{C}})$ (here $ \underline{\mathbb{C}} \in D^b(\text{Vect}) \simeq \mathcal{D}_0$). We show that  that $\{ \Psi_{\alpha} \, | \, \alpha \in \text{Cross}(m,n) \}$ constitute the irreducible objects in $\mathcal{D}_n^0$ (Proposition \ref{PropIrredObjects}). 

We also prove that for $\beta \in \text{Cross}(m,n)$, $\text{Ext}^{\bullet}(\Psi_{\alpha}, \Psi_{\beta})$ is given by the below formula. Here $\Lambda$ denotes a complex in $D^b(\text{Vect})$ concentrated in degrees $1$ and $-1$; and $\check{\alpha}$ is the $(m+2n,m)$ affine tangle obtained by ``inverting" $\alpha$. An a $(m,m)$ affine tangle $\gamma$ with no crossings is said to be ``good" if it has no cups or caps, and $\omega(\gamma)$ denote the number of circles present. In Theorem \ref{final}, we prove that

\begin{align*} \text{Ext}^{\bullet}(\Psi_{\alpha}, \Psi_{\beta}) = \begin{cases} \Lambda^{\otimes \omega(\check{\alpha} \circ \beta)}[-n] \; &\mbox{if   }\check{\alpha} \circ \beta \text{ is good}  \\ 0 \; &\mbox{otherwise  } \end{cases} \end{align*}

We also give a conjectural description of the multiplication in the algebra $$\text{Ext}^{\bullet}(\bigoplus_{\alpha \in \text{Cross}(m,n)} \Psi_{\alpha})$$ 

\subsection{Further directions} In the equivalence (\ref{bmrequiv}), the heart of the exotic t-structure is identified with an abelian category of modules over $U \mathfrak{g}$ having a fixed central character. Thus the simple objects that we have classified in the heart of the exotic $t$-structure will correspond to irreducible representations with that fixed central character. In future work, we plan to study these modules (e.g. compute dimensions, and give character formulaes) by using our description of these exotic sheaves. 

Using techniques developed by Cautis and Kamnitzer, we can show the Grothendieck group of the category $\mathcal{D}_n$ can be naturally identified with $V^{\otimes m+2n}_{[m]}$, the $m$-weight space in $V^{\otimes m+2n}$ (here $V = \mathbb{C}^2$, considered as an $\mathfrak{sl}_2$ representation). By looking at the images of the functors $\Psi(\alpha)$ in the Grothendieck group, we obtain a map $$ \hat{\psi}: \{ (m+2k, m+2l) \text{-affine tangles} \} \rightarrow \text{Hom}(V^{\otimes m+2k}_{[m]}, V^{\otimes m+2l}_{[m]})$$ We expect that this map will coincide with a well-known invariant for affine tangles, and that the images of the irreducible objects $\Psi_{\alpha}$ in the Grothendieck group will be the canonical basis (or perhaps the dual canonical basis). Inspired by Khovanov's construction in \cite{khov1} and \cite{khov2}, we also expect that it will be possible to give an alternate categorification of $\hat{\psi}$, using categories of modules over the Ext algebras controlling $\mathcal{D}_n$ (which closely resemble Khovanov's arc algebras). 

\subsection{Acknowledgements} We would like to thank Roman Bezrukavnikov, for suggesting this project to us, and for numerous helpful discussions and insights. We would also like to thank Paul Seidel for suggesting the study of $m=0$ case to the first author a while ago. We are also grateful to Joel Kamnitzer, Mikhail Khovanov, Catherina Stroppel, Ben Webster and David Yang for many helpful discussions; and to Anthony Henderson for help with the proof of Lemma \ref{lemma-transverse}. The first author would like to thank the University of Pittsburgh and the second author would like to thank the University of Sydney, where part of this work was completed. 

\section{Two-block Springer fibres}\label{section-varieties}

\subsection{Transverse slices for two-block nilpotents}


Fix $m \geq 0$. For $n \in \mathbb{Z}_{\geq 0}$, let $z_n$ be the standard nilpotent of Jordan type $(m+n,n)$. Let $S_n\subset \mathfrak{sl}_{m+2n}$ denote the Mirkovic-Vybornov transverse slice to the nilpotent $z_n$ (see section $3.3.1$ in \cite{mv}): 
\begin{align*} S_n = \{ z_n + \sum_{1 \leq i \leq m+2n} a_{i} e_{m+n,i} + \sum_{i \in \{1, \cdots, n, m+n+1, \cdots, m+2n \}} b_i e_{m+2n, i} \} \end{align*}
\begin{definition} Denote by $\mathcal{N}_n$ the nilpotent cone for $\mathfrak{sl}_{m+2n}$. Let $\mathcal{B}_n$ denote the complete flag variety for $GL_{m+2n}(\mathbb{C})$, and for $0 < k < m+2n$ define the varieties $\mathcal{P}_{k,n}$ as follows:
\begin{equation*} \mathcal{P}_{k,n}=\{ (0 \subset V_1 \subset \cdots \subset \widehat{V_k} \subset \cdots \subset V_{m+2n}=\mathbb{C}^{m+2n}) \}. 
\end{equation*}
Then the varieties $T^* \mathcal{B}_n$, $T^* \mathcal{P}_{k,n}$ can be described as follows:
\begin{align*}
T^* \mathcal{B}_n &= \{ (0 \subset V_1 \subset \cdots \subset V_{m+2n}=\mathbb{C}^{m+2n}, x)\ |\ x \in \mathfrak{sl}_{m+2n},\ xV_i \subset V_{i-1} \}; \\ 
T^* \mathcal{P}_{k,n}& = \{ (0 \subset V_1 \subset \cdots \subset \widehat{V_k} \subset \cdots \subset V_{m+2n}=\mathbb{C}^{m+2n}),x\ |\ \\ &x \in \mathfrak{sl}_{m+2n},\ xV_{k+1} \subset V_{k-1},\ xV_i \subset V_{i-1} \text{ for } i \neq k,k+1 \}. \end{align*} \end{definition}

Pick a basis $e_1, \ldots, e_{m+n+1}, f_1, \ldots, f_{n+1}$ of $\mathbb{C}^{m+2n+2}$ so that $z_{n+1} e_{i}= e_{i-1}$, $z_{n+1} f_j = f_{j-1}$ (where we set $e_0 = f_0 = 0$).

\begin{lemma} \label{1} For any $x \in S_{n+1}$ such that $\text{dim}(\text{Ker } x)=2$, we have $\text{Ker } x = \mathbb{C}e_1 \oplus \mathbb{C}f_1$, and there is a natural isomorphism $\phi_x: x V_{m+2n+2} \simeq \mathbb{C}^{m+2n}$. \end{lemma}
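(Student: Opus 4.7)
My approach is to split the argument into a direct computation of $\text{Ker}(x)$ and a construction of the natural isomorphism via the first isomorphism theorem. For the first part, the key observation is that any $x \in S_{n+1}$ agrees with $z_{n+1}$ outside rows $m+n+1$ and $m+2n+2$. In the other rows, the equation $(xv)_i = 0$ amounts to the Jordan-block shift conditions $v_{i+1}=0$ for $i \in \{1, \ldots, m+n\} \cup \{m+n+2, \ldots, m+2n+1\}$, which force $\text{Ker}(x) \subseteq \mathbb{C}e_1 \oplus \mathbb{C}f_1$ regardless of which $x \in S_{n+1}$ we pick.

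Next, I substitute $v = c e_1 + d f_1$ into the two remaining rows of $xv = 0$. This yields a $2 \times 2$ homogeneous linear system in $(c,d)$ whose four coefficients are $a_1, a_{m+n+2}, b_1, b_{m+n+2}$. The hypothesis $\dim \text{Ker}(x) = 2$ forces this system to vanish identically, so these four slice parameters are zero and $\text{Ker}(x) = \mathbb{C}e_1 \oplus \mathbb{C}f_1$ on the nose, proving the first claim.

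For the second assertion, rank-nullity immediately gives $\dim(xV_{m+2n+2}) = m+2n$. The natural isomorphism is constructed as follows: the map $x$ factors through a canonical isomorphism $\bar{x}: V_{m+2n+2}/\text{Ker}(x) \xrightarrow{\sim} xV_{m+2n+2}$, and since $\text{Ker}(x) = \mathbb{C}e_1 \oplus \mathbb{C}f_1$ is a fixed subspace not depending on $x$, the quotient $V_{m+2n+2}/(\mathbb{C}e_1 \oplus \mathbb{C}f_1)$ is canonically $\mathbb{C}^{m+2n}$, with basis given by the images of $e_2, \ldots, e_{m+n+1}, f_2, \ldots, f_{n+1}$. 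I would define $\phi_x$ as $\bar{x}^{-1}$ composed with this fixed identification; naturality in $x$ is then automatic, since only $\bar{x}$ varies with $x$ and it does so algebraically.

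The real subtlety---not an obstacle in proving the lemma as stated, but the reason to pick this specific identification---is the compatibility that the sequel will presumably need: one expects $\phi_x \circ x \circ \phi_x^{-1}$ to land in the smaller Mirkovic-Vybornov slice $S_n \subset \mathfrak{sl}_{m+2n}$, so that the construction can be iterated in later lemmas and the induction step on $n$ goes through. Checking that compatibility (rather than merely the existence of an abstract linear isomorphism) is where the concrete choice of basis above will earn its keep.
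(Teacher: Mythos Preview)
Your proof is correct, and the kernel computation is essentially identical to the paper's. The one genuine difference is in how $\phi_x$ is built: the paper gives an explicit description of $xV_{m+2n+2}$ and then defines $\phi_x$ as the restriction to $xV_{m+2n+2}$ of the projection $\gamma_{m,n}\colon \mathbb{C}^{m+2n+2}\to \bigoplus_{i\le m+n}\mathbb{C}e_i \oplus \bigoplus_{j\le n}\mathbb{C}f_j$ that kills $e_{m+n+1}$ and $f_{n+1}$, whereas you go through the quotient $V_{m+2n+2}/(\mathbb{C}e_1\oplus\mathbb{C}f_1)$ via $\bar{x}^{-1}$. These two constructions in fact coincide: on the basis $x(e_{i+1})=e_i+(\text{terms in }e_{m+n+1},f_{n+1})$ of $xV_{m+2n+2}$, the projection yields $e_i$, while your map sends $x(e_{i+1})\mapsto[e_{i+1}]\mapsto e_i$ under the index shift, and likewise for the $f$'s. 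What the paper's route buys is that the explicit form of $xV_{m+2n+2}$ lets one read off directly in the next proposition that the induced endomorphism $\Phi(x)$ lies in $S_n$; your final paragraph correctly identifies this as the point of the specific choice, and your quotient description would yield the same formulas after unwinding.
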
 \begin{proof} By the construction in \cite[section 3.3.1]{mv} we can assume that $xe_i = e_{i-1} + a_i e_{m+n+1} + c_i f_{m+1}$ if $i \leq m+1$, $xe_i = e_{i-1} + a_i e_{m+n+1}$ if $i > m+1$, and $x f_{j} = f_{j-1} + b_j e_{m+n+1} + d_j f_{m+1}$. Then we have:
\begin{align*} x\left(\sum_{1 \leq i \leq m+n+1} \lambda_i e_i + \sum_{1 \leq j \leq m+1} \nu_j f_j\right) &= \sum_{1 \leq i \leq m+n} \lambda_{i+1} e_i + \left(\sum_{1 \leq i \leq m+n+1} a_i \lambda_i + \sum_{1 \leq j \leq m+1} b_j \nu_j\right)e_{m+n+1} \\+&\sum_{1 \leq j \leq m} \nu_{j+1} f_j + \left(\sum_{1 \leq i \leq m+1} a_i \lambda_i+ \sum_{1 \leq j \leq m+1} d_j \nu_j\right)f_{m+1} \label{eqn1} \\ \end{align*} So $xv=x\bigl(\sum_{1 \leq i \leq m+n+1} \lambda_i e_i + \sum_{1 \leq j \leq m+1} \nu_j f_j\bigr)=0$ implies that $\lambda_i = \nu_j = 0$ for $i,j>1$, i.e. that $v \in \mathbb{C}e_1 \oplus \mathbb{C}f_1$. If $xv=0$ it follows that $a_1=b_1=c_1=d_1=0$. So:
\begin{multline*} 
x V_{m+2n+2} = \biggl\{
\sum_{1 \leq i \leq m+n} \lambda_i e_i + 
\left(\sum_{1 \leq i \leq m+n} a_{i+1} \lambda_i + \sum_{1 \leq j \leq n} b_{j+1} \nu_j\right)e_{m+n+1} + \\ 
\sum_{1 \leq j \leq n} \mu_j f_j + \left(\sum_{1 \leq i \leq m} c_{i+1} \lambda_i + \sum_{1 \leq j \leq n} d_{j+1} \nu_j\right) f_{n+1} 
\biggr\}
\end{multline*}
Let $\gamma_{m,n}: \mathbb{C}^{m+2n+2}=(\bigoplus_{1 \leq i \leq m+n} \mathbb{C}e_i \oplus \bigoplus_{1 \leq j \leq n} \mathbb{C} f_j) \oplus (\mathbb{C} e_{m+n+1} \oplus \mathbb{C} f_{n+1}) \rightarrow (\bigoplus_{1 \leq i \leq m+n} \mathbb{C}e_i \oplus \bigoplus_{1 \leq j \leq n} \mathbb{C} f_j)$ denote the natural projection map. Now $\phi_x := \gamma_{m,n}|_{x V_{m+2n+2}}: xV_{m+2n+2} \rightarrow \bigoplus_{1 \leq i \leq m+n} \mathbb{C}e_i \oplus \bigoplus_{1 \leq j \leq n} \mathbb{C} f_j$ is an isomorphism.
\end{proof}

\begin{proposition}\label{prop-isomorphism-Un-partial_Un+1}
 For every $0<k<m+2n+2$ we have an isomorphism of varieties $S_{n+1} \times_{\mathfrak{sl}_{m+2n+2}} T^*\mathcal{P}_{k,n+1} \simeq S_n \times_{\mathfrak{sl}_{m+2n}} T^*\mathcal{B}_{n}$. 
\end{proposition}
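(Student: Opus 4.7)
The plan is to construct mutually inverse morphisms between the two fibre products by transferring data through the natural projection $\phi_x : xV_{m+2n+2} \xrightarrow{\sim} \mathbb{C}^{m+2n}$ of Lemma~\ref{1} fibrewise in $x$. The first thing to check is that the hypothesis $\dim\ker x = 2$ of Lemma~\ref{1} holds at every point of the left-hand side: the partial-flag condition $xV_{k+1} \subset V_{k-1}$ forces $\dim\ker(x|_{V_{k+1}}) \geq 2$, hence $\dim\ker x \geq 2$; on the other hand, the Jordan type of any nilpotent $x\in S_{n+1}$ must dominate $(m+n+1,n+1)$ in closure order, and all partitions dominating $(m+n+1,n+1)$ have at most two parts, so $\dim\ker x\leq 2$. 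Lemma~\ref{1} then supplies $\phi_x$ and $\ker x = \mathbb{C}e_1\oplus\mathbb{C}f_1$; a short dimension count also yields $\ker x\subset V_{k+1}$.

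The forward map $\Phi : (x,V_\bullet)\mapsto (y,W_\bullet)$ is then defined by $y := \phi_x \circ x|_{xV_{m+2n+2}} \circ \phi_x^{-1} \in \mathfrak{sl}_{m+2n}$ together with $W_j := \phi_x(xV_{j+2})$ for $0 \leq j \leq k-1$ and $W_j := \phi_x(V_{j+1}\cap xV_{m+2n+2})$ for $k-1 \leq j \leq m+2n$; the two prescriptions agree at $j=k-1$ because $\ker x\subset V_{k+1}$. Using the identity $\phi_x = \gamma_{m,n}|_{xV_{m+2n+2}}$ from the proof of Lemma~\ref{1}, the MV-slice coordinates of $y$ are obtained from those of $x$ by discarding the entries $a_{m+n+1}, b_{n+1}, c_i, d_j$ that live at level $n+1$, so $y\in S_n$. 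The dimension count for each $W_j$ and the compatibility $yW_j\subset W_{j-1}$ then follow directly from $xV_i\subset V_{i-1}$ together with the inclusion $\ker x\subset V_{k+1}$.

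For the inverse, given $(y,W_\bullet)\in\mathrm{RHS}$, one reconstructs $x\in S_{n+1}$ by reinstating the dropped MV-slice coordinates (treating them as free parameters that parametrise the extra geometric data on the left, namely the two flag steps $V_{k-1}, V_{k+1}$ adjacent to the missing position together with the subspace $xV_{m+2n+2}\subset\mathbb{C}^{m+2n+2}$), and builds the partial flag in $\mathbb{C}^{m+2n+2}$ by pulling $W_\bullet$ back along $\phi_x^{-1}$ and inserting $\mathbb{C}e_1$ and (past the critical step) $\mathbb{C}f_1$ into the appropriate indices, leaving position $k$ empty. The main obstacle I expect is the combinatorial bookkeeping around the index $k$: the recipe for which steps of $V_\bullet$ are pulled back from $W_\bullet$ and which are kernel augmentations varies with $k$, and one must verify uniformly in $0<k<m+2n+2$ that this assembles into an algebraic isomorphism of schemes rather than merely a bijection on closed points. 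Once the indexing is pinned down, mutual inversion reduces to the relation $\phi_x\circ x = \gamma_{m,n}$ noted in Lemma~\ref{1}.
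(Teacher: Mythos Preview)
Your overall strategy coincides with the paper's: transport the partial flag through $\phi_x$ and check that the induced nilpotent lands in $S_n$. Two points, however, need correction.

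\textbf{The flag recipe is inverted.} Your formula $W_j=\phi_x(xV_{j+2})$ for $j\le k-1$ does not give a $j$-dimensional space: for $j+2<k+1$ you have no control over $\dim(\ker x\cap V_{j+2})$, so $xV_{j+2}$ can have dimension $j$, $j+1$, or $j+2$. Dually, your second branch invokes $V_{k}$ at $j=k-1$, which is not part of the partial flag. The paper's (correct) assignment is the transpose of yours: $W_j=\phi_x(V_j)$ for $j\le k-1$ (legitimate because $V_j\subset V_{k-1}=xV_{k+1}\subset\operatorname{im}x$), and $W_j=\phi_x(xV_{j+2})$ for $j\ge k-1$ (where $\ker x\subset V_{k+1}\subset V_{j+2}$ guarantees the dimension drop is exactly $2$).

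\textbf{The inverse has no free parameters.} The more serious issue is your description of the inverse. You propose to ``reinstate dropped MV-slice coordinates as free parameters'' encoding extra flag data. In fact the induced map on nilpotents $\Phi:\{x\in S_{n+1}\cap\mathcal N_{n+1}:\dim\ker x=2\}\to S_n\cap\mathcal N_n$ is already a \emph{bijection}: the condition $\dim\ker x=2$ forces (in the notation of Lemma~\ref{1}) $a_1=b_1=c_1=d_1=0$, and the remaining slice coordinates of $x$ are exactly those of $y=\Phi(x)$ shifted by one index. So $x$ is uniquely determined by $y$, and the partial flag is then recovered from $W_\bullet$ by $V_j=\phi_x^{-1}(W_j)$ for $j\le k-1$ and $V_{j}=x^{-1}(\phi_x^{-1}(W_{j-2}))$ for $j\ge k+1$. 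There is no residual moduli to absorb; the ``extra geometric data'' you list ($V_{k-1}$, $V_{k+1}$, $xV_{m+2n+2}$) is entirely determined by $(y,W_\bullet)$. Once you fix these two points, the remaining verification is the straightforward check the paper indicates.
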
 
\begin{proof} 
By definition: 
\begin{align*} 
S_{n+1} \times_{\mathfrak{sl}_{m+2n+2}} T^*\mathcal{P}_{k,n+1} = &
\{ (0 \subset V_1 \subset \cdots \subset \widehat{V_k} \subset \cdots \subset V_{m+2n+2},\ x) \  | \\
 & \  x \in S_{n+1},\ xV_{k+1} \subset V_{k-1},\ xV_i \subset V_{i-1} \text{ for } i \neq k,k+1 \}; 
\end{align*} 
\begin{align*} S_n \times_{\mathfrak{sl}_{m+2n}} T^*\mathcal{B}_{n} = \{ (0 \subset W_1 \subset \cdots \subset W_{m+2n} = \mathbb{C}^{m+2n}, \ y)\  |\  y \in S_n, \ y W_{i} \subset W_{i-1} \}. 
\end{align*} 
Since $x \in S_{n+1}$, the Jordan type of $x$ is a two-block partition, and $\text{dim}(\text{Ker}(x)) \leq 2$; but $x V_{k+1} \subset V_{k-1}$ so we must have $x V_{k+1} = V_{k-1}$. Consider the flag $(0 \subset V_1 \subset \cdots \subset V_{k-1} = x V_{k+1} \subset x V_{k+2} \subset \cdots \subset x V_{m+2n+2})$. 
Recall the isomorphism $\phi_x: x V_{m+2n+2} \xrightarrow{\sim}\mathbb{C}^{m+2n}$ from Lemma \ref{1} and denote by $\Phi(x) \in \text{End}(\mathbb{C}^{m+2n})$ the endomorphism induced on $\mathbb{C}^{m+2n}$ by the action of $x$ on $x V_{m+2n+2}$. Construct a map $\alpha: S_{n+1} \times_{\mathfrak{sl}_{m+2n+2}} T^*\mathcal{P}_{k,n+1} \rightarrow T^*\mathcal{B}_{n}$ as follows: 
\begin{multline*} \alpha(0 \subset V_1 \subset \cdots \subset V_{m+2n}, x)= \\
=((0 \subset \phi_x(V_1) \subset \cdots \subset \phi_x(V_{k-1}) = \phi_x(x V_{k+1}) \subset \phi_x(x V_{k+2}) \subset \cdots \subset \mathbb{C}^{m+2n}) ,\Phi(x)) 
\end{multline*}

We claim that $\alpha$ gives the required isomorphism $S_{n+1} \times_{\mathfrak{sl}_{m+2n+2}} T^*\mathcal{P}_{k,n+1} \simeq S_n \times_{\mathfrak{sl}_{m+2n}} T^*\mathcal{B}_{n}$. First we check that $\Phi(x) \in S_n$. From the argument in Lemma \ref{1}, $\Phi(x) e_{i} = e_{i-1} + a_{i+1} e_{m+n} + c_{i+1} f_n$ if $i \leq n$, $\Phi(x) e_{i} = e_{i-1} + a_{i+1} e_{m+n}$ if $i > n$, and $\Phi(x) f_{j} = f_{j-1} + c_{j+1} e_{m+n} + d_{j+1} f_n$. Thus $\Phi$ gives a bijection between $\{ x \in S_{n+1} \cap \mathcal{N}_{n+1} | \text{ dim}(\text{Ker }x) = 2 \}$ and $S_{n} \cap \mathcal{N}_n$. It follows that $\alpha$ has image $S_n \times_{\mathfrak{sl}_{m+2n}} T^*\mathcal{B}_{n}$ and that $\alpha$ is an isomorphism onto its image, as required. \end{proof}

Let us define the varieties and categories that we are going to use throughout the paper.

\begin{definition}\label{definition-Un-Xni}
 Under the Springer resolution map $\pi_n: T^* \mathcal{B}_n \rightarrow \mathcal{N}_{n}$, let $\mathcal{B}_{z_n} = \pi_n^{-1}(z_n)$. 
Let 
\begin{align*}
U_n = S_n \times_{\mathfrak{sl}_{m+2n}} T^*\mathcal{B}_{n} =
& \{ (0 \subset V_1 \subset \cdots \subset V_{m+2n},\ x) \ | \ x \in S_{n},\ xV_j \subset V_{j-1}\  \forall j \}.
\end{align*}
\begin{align*} 
X_{n,i} = S_n \times_{\mathfrak{sl}_{m+2n}} T^*\mathcal{P}_{i,n} \times_{\mathcal{P}_{i,n}} \mathcal{B}_n =
& \{ (0 \subset V_1 \subset \cdots \subset V_{m+2n},\ x) \ | \\ 
&\ x \in S_{n},\ xV_{i+1} \subset V_{i-1},\ xV_j \subset V_{j-1}\  \forall j \}.
\end{align*}
Define $\mathcal{D}_n =D^b(\text{Coh}_{\mathcal{B}_{z_n}}(U_n))$ to be the bounded derived category of coherent sheaves on $U_n$ supported on $\mathcal{B}_{z_n}$. 
\end{definition}

Note that $X_{n,i}$ is a closed subvariety of $U_n$ of codimension $1$. On the other hand, the projection of $X_{n,i}$ onto
$S_n \times_{\mathfrak{sl}_{m+2n}} T^*\mathcal{P}_{i,n}$, which is by Proposition \ref{prop-isomorphism-Un-partial_Un+1} 
isomorphic to $U_{n-1}$, is a $\mathbb{P}^1$-bundle. Indeed, the fiber over each point
$ (0 \subset V_1 \subset \cdots \subset \widehat{V_k} \subset \cdots \subset V_{m+2n+2}, \ x)$ is isomorphic to 
$\mathbb{P}(V_{i+1}/V_{i-1})$.

\subsection{Description of the general setup}\label{subsection-varieties-general-setup}


Our geometric setup is going to be be similar to that of \cite{ck}, so we will describe both alongside and point out the dependencies and the differences.
Consider a $2(m+2n)$-dimensional vector space $V_{m,n}$ with basis $e_1, \ldots , e_{m+2n}, f_1, \ldots ,f_{m+2n}$ and a nilpotent $z$ such that $z e_{i} = e_{i-1}$, $z f_{i} = f_{i-1}$. Let $W_{m,n} \subset V_{m,n}$ denote the vector subspace with basis $e_1, \ldots, e_{m+n}, f_1, \ldots, f_n$, so that $z|_{W_{m,n}}$ has Jordan type $(m+n,n)$; we will identify $W_{m,n}$ with $V_{m+2n}$. Let $P:V_{m,n} \rightarrow W_{m,n}$ denote the projection defined by $Pe_{i} = e_{i}$ if $i \leq m+n$, $Pe_{i}=0$ if $i > m+n$; $Pf_i = f_i$ if $i \leq n$, $Pf_i = f_i$ if $i > n$.
In Section $2$ of \cite{ck}, the following four series of varieties are defined (for $m=0$):
\begin{align*}
Y_{m+2n} & = \{ (L_1 \subset \cdots \subset L_{m+2n} \subset V_{m,n}) | \text{ dim}\  L_i = i,\ zL_{i} \subset L_{i-1} \};\\
Q_{m+2n} & =\{ (L_1 \subset \cdots \subset L_{m+2n}) \in Y_{m+2n} | P(L_{m+2n}) = W_{m,n} \};\\
X_{m+2n}^i & = \{ (L_1 \subset L_2 \subset \cdots \subset L_{m+2n}) | \  L_{i+1}=z^{-1}(L_{i-1}) \};\\
Z_{m+2n}^i & = \{ (L,L') \in Y_{m+2n} \times Y_{m+2n} | \  L_j = L'_{j} \  \forall \  j \neq i \}.
\end{align*}
In the notation of \cite{ck}, the variety $Q_{m+2n}$ should be denoted by $U_{m+2n}$, but we chose to call it $Q_{m+2n}$ here to
avoid the confusion with our $U_n$.
The relationships between these varieties are as follows: $Q_{m+2n}\subset Y_{m+2n}$ is an open subset, and  $X_{m+2n}^i\subset Y_{m+2n}$ is a closed
subset. Moreover, $X_{m+2n}^i$ is fibered over $Y_{m+2n-2}$ with fiber $\mathbb{P}^1$, and thus can be considered a closed subset in
$Y_{m+2n}\times Y_{m+2n-2}$. 

Cautis and Kamnitzer use the categories $D(Y_{m+2n})$ for their categorification, and utilize the varieties
$X_{m+2n}^i\subset Y_{m+2n}\times Y_{m+2n-2}$ and $Z_{m+2n}^i \subset Y_{m+2n} \times Y_{m+2n}$ to construct Fourier-Mukai functors
that generate the tangle category action. We are going to use the varieties $U_n$ and $X_{n,i}\subset U_n\times U_{n-1}$ from Definition
\ref{definition-Un-Xni}
that have similar properties, namely $X_{n,i}\to U_n$ is a closed embedding, and $X_{n,i}\to U_{n-1}$ is a $\mathbb{P}^1$ bundle.
We are going to use the categories $\mathcal{D}_n=D^b(\text{Coh}_{\mathcal{B}_{z_n}}(U_n))$ for the categorification,
and the varieties $X_{n,i}$ will provide the cup and cap tangle generators (see Section \ref{section-tangles} for the description of the tangle category).
While it is true that there is an embedding $U_n\hookrightarrow Y_{m+2n}$ (see section \ref{section-varieties-Un} below) 
and under this embedding we can identify $X_{n,i}\simeq U_n\cap X_{m+2n}^i$, certain geometric facts such as Lemma \ref{transverse} below
do not follow directly from their counterparts in \cite{ck}.
 We will not need the analogue of $Z_{m+2n}^i$ to describe the crossing generators since our Theorem \ref{spherical} allows us 
to define the crossing generators and
prove most tangle relations without direct computations with Fourier-Mukai kernels. 

\subsection{The varieties $U_n$}\label{section-varieties-Un}

We are going to show that $U_n$ is a isomorphic to a closed subvariety of $Q_{m+2n}$ and thus a locally closed subvariety of $Y_{m+2n}$.
To do this, we recall that $U_n \simeq S_n \times_{\mathfrak{sl}_{m+2n}} T^* \mathcal{B}_n$ and present $Q_{m+2n}$ in a similar form.
\begin{definition} \begin{align*} S_n' &= \{ \left( \begin{array}{cccccccc}
� & 1 & � & � & � & � & � & � \\
� & � & \ddots & � & � & � & � & � \\
� & � & � & 1 & � & � & � & � \\
a_1 & a_2 & \cdots & a_{m+n} & b_1 & b_2 & \cdots  & b_{n} \\
� & � & � & � & � & 1 & � & � \\
� & � & � & � & � & � & \ddots & � \\
� & � & � & � & � & � & � & 1 \\
c_1 & c_2 & \cdots & c_{m+n} & d_1 & d_2 & \cdots & d_n
\end{array} \right) \}. \end{align*} \end{definition}
Note that we have $S_n \subset S_n'$, and $U_n \subset S_n' \times_{\mathfrak{sl}_{m+2n}} T^* \mathcal{B}_n$. 
 Now we can prove the following lemma:
\begin{lemma} \label{lin} Given $x \in S_n' \cap \mathcal{N}_n$, there exists a unique subspace $L_{m+2n} \subset V_{m,n}$, with $P L_{m+2n} = W_{m,n}$, such that $z L_{m+2n} \subset L_{m+2n}$ and $PzP^{-1}=x$. \end{lemma}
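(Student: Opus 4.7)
The plan is to parameterize $L_{m+2n}$ as the image of a linear section of $P$ and reduce the problem to an explicit recursion. Since $PL_{m+2n}=W_{m,n}$ with matching dimensions, $P|_{L_{m+2n}}$ is an isomorphism, and specifying $L_{m+2n}$ amounts to giving a section $\iota:W_{m,n}\to V_{m,n}$ of $P$ with $z\iota=\iota x$. Write $\iota=\mathrm{id}_W+\phi$ where $\phi$ takes values in $\ker P$. Note $z$ preserves $W_{m,n}$ and restricts there to the standard nilpotent $z_0$ of Jordan type $(m+n,n)$; the form of $S_n'$ then forces $x-z_0$ to have image in $\mathrm{span}(e_{m+n},f_n)$, so we can write $(x-z_0)v=\alpha(v)e_{m+n}+\beta(v)f_n$ for linear functionals $\alpha,\beta\in W_{m,n}^*$ read off the non-trivial rows of $x$.

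Projecting the identity $z(v+\phi(v))=xv+\phi(xv)$ onto $W_{m,n}$ and $\ker P$ via $P$ and $Q:=I-P$ produces the two equations $Pz\phi(v)=\alpha(v)e_{m+n}+\beta(v)f_n$ and $Qz\phi(v)=\phi(xv)$. Expanding $\phi(v)=\sum_{k=1}^n\gamma_k(v)e_{m+n+k}+\sum_{l=1}^{m+n}\delta_l(v)f_{n+l}$ in the natural basis of $\ker P$, the first equation pins down $\gamma_1=\alpha$ and $\delta_1=\beta$, while the second yields the recursions $\gamma_{k+1}(v)=\gamma_k(xv)$ and $\delta_{l+1}(v)=\delta_l(xv)$. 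Iterating gives the closed forms $\gamma_k(v)=\alpha(x^{k-1}v)$ and $\delta_l(v)=\beta(x^{l-1}v)$, which uniquely determine $\phi$, and hence $L_{m+2n}$, proving uniqueness.

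The main obstacle is existence: the recursion must terminate compatibly with $\phi$ being supported on the finite basis of $\ker P$, which amounts to the identities $\alpha\circ x^n=0$ and $\beta\circ x^{m+n}=0$, equivalently $e_{m+n}^*x^{n+1}=0$ and $f_n^*x^{m+n+1}=0$ as functionals on $W_{m,n}$. I would prove these using the nilpotency of $x\in\mathcal{N}_n$: a direct expansion via $x=z_0+M$ with $\mathrm{rk}\,M\leq 2$ shows that the entries of these two rows are polynomial expressions in the matrix entries of $x$ that can be identified with (linear combinations of) the subleading coefficients of $\det(\lambda I-x)$, all of which vanish by Cayley--Hamilton since that characteristic polynomial equals $\lambda^{m+2n}$ for $x\in\mathcal{N}_n$. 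The pattern is already visible in small cases---for instance when $m=n=1$, the three non-trivial components of $\alpha\cdot x$ match exactly the coefficients $p_0,p_1,p_2$ of $\det(\lambda I-x)$, and together with the trace-free condition this reproduces nilpotency. Once these identities are established, the explicit formula for $\phi$ is well-defined, and $L_{m+2n}=(\mathrm{id}+\phi)(W_{m,n})$ is the required subspace.
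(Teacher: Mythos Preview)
Your parametrisation of $L_{m+2n}$ as the graph of a section $\iota=\mathrm{id}+\phi$ and the ensuing recursion $\gamma_{k+1}=\gamma_k\circ x$, $\delta_{l+1}=\delta_l\circ x$ is exactly the paper's argument recast in coordinate-free language: the paper writes $\tilde e_i=P^{-1}e_i$, $\tilde f_j=P^{-1}f_j$ in coordinates, derives the same recursion, and identifies $\gamma_k(e_i)=(x^k)_{m+n,i}$, $\delta_l(e_i)=(x^l)_{m+2n,i}$, etc. So uniqueness and the reduction of existence to the two vanishing statements $e_{m+n}^*x^{n+1}=0$ and $f_n^*x^{m+n+1}=0$ are handled the same way.

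The gap is in your existence step. You assert that the entries of these two rows ``can be identified with (linear combinations of) the subleading coefficients of $\det(\lambda I-x)$'', but this is only checked in the case $m=n=1$ and not proved in general; it is not clear how to carry that identification through without further work. The paper bypasses this entirely with a one-line shift argument that exploits the companion-like shape of $S_n'$: for $u<m+n$ the $u$-th row of $x$ is $e_{u+1}^*$, so $(x^{r+1})_{u,p}=(x^r)_{u+1,p}$, whence
\[
(x^{n+1})_{m+n,p}=(x^{n+2})_{m+n-1,p}=\cdots=(x^{m+2n})_{1,p}=0,
\]
the last equality because $x$ is nilpotent of size $m+2n$. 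The same trick along rows $m+n+1,\dots,m+2n-1$ gives $(x^{m+n+1})_{m+2n,p}=(x^{m+2n})_{m+n+1,p}=0$. This uses nilpotency only through $x^{m+2n}=0$ (which is Cayley--Hamilton for a nilpotent), so the spirit of your argument is right, but the actual mechanism is the shift identity rather than any explicit match with characteristic-polynomial coefficients. Replacing your final paragraph with this shift argument completes the proof.
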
 \begin{proof} Since $P L_{m+2n} = W_{m,n}$, to specify the subspace $L_{m+2n}$ it suffices to specify \begin{align*} \tilde{e}_i:=P^{-1}(e_i) = e_{i} + \sum_{1 \leq k \leq n} a_i^{(k)} e_{m+n+k} + \sum_{1 \leq l \leq m+n} c_i^{(l)}f_{n+l} \\ \tilde{f}_j:=P^{-1}(f_j) = f_j + \sum_{1 \leq k \leq n} b_j^{(k)} e_{m+n+k} + \sum_{1 \leq l \leq m+n} d_j^{(l)} f_{n+l} \end{align*} Suppose for $1 \leq i \leq m+n, 1 \leq j \leq n$, $x e_{i} = e_{i-1} + a_i e_{m+n} + c_i f_{n}, xf_j = f_{j-1} + b_j e_{m+n} + d_j f_{n}$; then the identity $PzP^{-1}=x$ is equivalent to $a_i^{(1)}=a_i, c_i^{(1)} = c_i, b_j^{(1)}=b_j$ and $d_j^{(1)}=d_j$. The statement $z L_{m+2n} \subset L_{m+2n}$, i.e. $z \tilde{e}_i, z \tilde{f}_j \in L_{m+2n}$, is equivalent to saying that: \begin{align*} z \tilde{e}_i = \tilde{e}_{i-1} + a_i \tilde{e}_{m+n} + c_i \tilde{f}_n \\ z \tilde{f}_j = \tilde{f}_{j-1} + b_j \tilde{e}_{m+n} + d_j \tilde{f}_n \end{align*} 
Expanding the above two equations: 
\footnotesize
\begin{multline*}
 e_{i-1} + \sum_{1 \leq k \leq n} a_i^{(k)} e_{m+n+k-1} + \sum_{1 \leq l \leq m+n} c_i^{(l)}f_{n+l-1} = e_{i-1} + \sum_{1 \leq k \leq n} a_{i-1}^{(k)} e_{m+n+k} + \sum_{1 \leq l \leq m+n} c_{i-1}^{(l)}f_{n+l} +\\
 + a_i\left(e_{m+n} + \sum_{1 \leq k \leq n} a_{m+n}^{(k)} e_{m+n+k} + \sum_{1 \leq l \leq m+n} c_{m+n}^{(l)}f_{n+l}\right)
 + c_i\left(f_n + \sum_{1 \leq k \leq n} b_n^{(k)} e_{m+n+k} + \sum_{1 \leq l \leq m+n} d_n^{(l)} f_{n+l}\right);
\end{multline*}
\begin{multline*}
f_{j-1} + \sum_{1 \leq k \leq n} b_j^{(k)} e_{m+n+k-1} + \sum_{1 \leq l \leq m+n} d_j^{(l)} f_{n+l-1} = f_{j-1} + \sum_{1 \leq k \leq n} b_{j-1}^{(k)} e_{m+n+k} + \sum_{1  \leq l \leq m+n} d_{j-1}^{(l)} f_{n+l}+ \\
 + b_j\left(e_{m+n} + \sum_{1 \leq k \leq n} a_{m+n}^{(k)} e_{m+n+k} + \sum_{1 \leq l \leq m+n} c_{m+n}^{(l)}f_{n+l}\right)
 + d_j\left(f_n + \sum_{1 \leq k \leq n} b_n^{(k)} e_{m+n+k} + \sum_{1 \leq l \leq m+n} d_n^{(l)} f_{n+l}\right).
\end{multline*} 
\normalsize
Extracting coefficients of $e_{m+n+k}$ and $f_{n+l}$ in the above two equations gives: \begin{align*} a_i^{(k+1)} &= a_{i-1}^{(k)} + a_i a_{m+n}^{(k)} + c_i b_n^{(k)}, \qquad b_j^{(k+1)} = b_{j-1}^{(k)} + b_j a_{m+n}^{(k)} + d_j b_n^{(k)} \\ c_i^{(l+1)} &= c_i^{(l)} + a_i c_{m+n}^{(l)} + c_i d_n^{(l)}, \qquad d_j^{(l+1)} = d_{j-1}^{(l)} + b_j c_{m+n}^{(l)} + d_j d_n^{(l)} \end{align*} Consider the matrix coefficients $(x^k)_{p,q}$ for $1 \leq p,q \leq m+2n$. It follows by induction that we have $a_i^{(k)}= (x^k)_{m+n,i}, b_j^{(k)}= (x^k)_{m+n,m+n+j}, c_i^{(l)}= (x^l)_{m+2n,i}, d_j^{(l)}=(x^l)_{m+2n,m+n+j}$. Indeed, the case where $k=l=1$ is clear; and the induction step follows from expanding the equation $(x^{r+1})_{uv} = \sum_{1 \leq w \leq m+2n} (x^r)_{uw} (x)_{wv}$ for $u=m+n$ and $u=m+2n$.

Using the above recursive definition of $a_{i}^{(k)}$, $b_{j}^{(k)}$, $c_{i}^{(l)}$, and $d_{j}^{(l)}$, it remains to prove that $a_{i}^{(n+1)}=b_{j}^{(n+1)}=0$ and $c_{i}^{(m+n+1)}=d_{j}^{(m+n+1)}=0$. Thus we must show that $(x^{n+1})_{m+n,p}=(x^{m+n+1})_{m+2n,p}=0$ given $1 \leq p \leq m+2n$. Using the equation $(x^{r+1})_{uv} = \sum_{1 \leq w \leq m+2n} (x)_{uw} (x^r)_{wv}$, we compute that: \begin{align*} (x^{n+1})_{m+n,p}&=(x^{n+2})_{m+n-1,p}= \cdots = (x^{m+2n})_{1,p}=0 \\ (x^{m+n+1})_{m+2n,p}&=(x^{m+n+2})_{m+2n-1,p}= \cdots = (x^{m+2n})_{m+n+1,p}=0 \end{align*} This completes the proof of the existence and uniqueness of a $z$-stable subspace $L_{m+2n} \subset V_{m,n}$ with $P L_{m+2n} = W_{m,n}$ and $PzP^{-1} = x$. \end{proof}

Now we can prove the following generalization of Proposition $2.4$ in \cite{ck}:
\begin{lemma} \label{sub} 
There is an isomorphism $Q_{m+2n} \simeq S_n' \times_{\mathfrak{sl}_{m+2n}} T^* \mathcal{B}_n$. 
\end{lemma}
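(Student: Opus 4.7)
The plan is to construct explicit mutually inverse morphisms between the two varieties, with Lemma \ref{lin} providing the bridge.

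\textbf{Forward map} $\Phi\colon Q_{m+2n}\to S_n'\times_{\mathfrak{sl}_{m+2n}} T^*\mathcal{B}_n$. Given $(L_1\subset\cdots\subset L_{m+2n})\in Q_{m+2n}$, the condition $P(L_{m+2n})=W_{m,n}$ together with $\dim L_{m+2n}=m+2n=\dim W_{m,n}$ forces $P|_{L_{m+2n}}$ to be an isomorphism. Set $W_i:=P(L_i)$ and $x:=P\circ z\circ (P|_{L_{m+2n}})^{-1}\in\mathrm{End}(W_{m,n})$. The containment $zL_i\subset L_{i-1}$ immediately yields $xW_i\subset W_{i-1}$. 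The only nontrivial point is that $x$ actually lies in $S_n'$, i.e.\ agrees with $z_n$ outside rows $m+n$ and $m+2n$. This is a direct computation: writing $P^{-1}(e_i)=e_i+\sum_k a_i^{(k)}e_{m+n+k}+\sum_l c_i^{(l)}f_{n+l}$, applying $z$, and then $P$, kills every term whose $e$- or $f$-index lands outside $W_{m,n}$, so $xe_i=e_{i-1}+a_i^{(1)}e_{m+n}+c_i^{(1)}f_n$, and analogously for $xf_j$. This is precisely the defining form for $S_n'$.

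\textbf{Inverse map} $\Psi\colon S_n'\times_{\mathfrak{sl}_{m+2n}} T^*\mathcal{B}_n\to Q_{m+2n}$. Given $(W_1\subset\cdots\subset W_{m+2n},x)$ with $x\in S_n'$, note that $xW_i\subset W_{i-1}$ forces $x$ to be nilpotent, so $x\in S_n'\cap\mathcal{N}_n$. Lemma \ref{lin} then supplies a unique $z$-stable $L_{m+2n}\subset V_{m,n}$ with $P(L_{m+2n})=W_{m,n}$ and $PzP^{-1}=x$. Define $L_i:=(P|_{L_{m+2n}})^{-1}(W_i)$; the condition $xW_i\subset W_{i-1}$ transfers under $P^{-1}$ to $zL_i\subset L_{i-1}$, so the resulting flag lies in $Q_{m+2n}$.

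\textbf{Why the two maps are mutually inverse.} The composition $\Phi\circ\Psi$ is the identity by construction, since $\Psi$ is defined precisely so that $P$ carries $(L_i,z)$ back to $(W_i,x)$. For $\Psi\circ\Phi$, the key observation is that the $z$-stable lift $L_{m+2n}$ produced by Lemma \ref{lin} is characterized \emph{uniquely} by the conditions $PL_{m+2n}=W_{m,n}$ and $PzP^{-1}=x$; since the $L_{m+2n}$ we started with satisfies both, it must coincide with the one reconstructed by Lemma \ref{lin}, and then the smaller subspaces $L_i=(P|_{L_{m+2n}})^{-1}(W_i)$ are forced.

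\textbf{Expected main obstacle.} The conceptual work — identifying $L_{m+2n}$ with its image $W_{m,n}=P(L_{m+2n})$ so as to transfer $z$ to a matrix in $S_n'$ — has already been absorbed into Lemma \ref{lin}. The remaining subtlety is checking that $\Phi$ truly lands in $S_n'$ (not merely in the ambient $\mathfrak{sl}_{m+2n}$), which is the explicit coefficient computation sketched above. Algebraicity of both maps is automatic: $P^{-1}$ restricted to $L_{m+2n}$ varies algebraically with $L_{m+2n}\in Q_{m+2n}$, and the formulas expressing the $a_i^{(k)},b_j^{(k)},c_i^{(l)},d_j^{(l)}$ in terms of matrix entries of $x$ (as in Lemma \ref{lin}) are polynomial.
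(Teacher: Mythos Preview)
Your proposal is correct and follows essentially the same approach as the paper: construct the forward map via $x=PzP^{-1}$ and $W_i=P(L_i)$, verify $x\in S_n'$ by the explicit coordinate computation, and use the uniqueness clause of Lemma~\ref{lin} to build the inverse. If anything, your write-up is slightly more careful than the paper's in explicitly noting that $x$ is nilpotent before invoking Lemma~\ref{lin} and in spelling out why uniqueness forces $\Psi\circ\Phi=\mathrm{id}$.
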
 
\begin{proof} 
Given $(L_1 \subset \cdots \subset L_{m+2n}) \in Q_{m+2n}$, since $P: L_{m+2n} \rightarrow W_{m,n}$ is an isomorphism, we have a nilpotent endomorphism $x = PzP^{-1} \in \text{End}(V_{m+2n})$ (here we identify $W_{m,n}$ and $V_{m+2n}$). If $P^{-1}e_i = e_{i} + v'$, where $v'$ lies in the span of $e_{m+n+1}, \cdots, e_{m+2n}, f_{n+1}, \cdots, f_{m+2n}$, then $zP^{-1}e_i=e_{i-1}+v''$ where $v''$ is in the span of $e_{m+n}, \cdots, e_{m+2n-1}, f_n, \cdots, f_{m+2n-1}$. Hence $PzP^{-1}e_i = x e_{i} \in \text{span}(e_{i-1}, e_{m+n}, f_n)$, and similarly $x f_{i} \in \text{span}(f_{i-1}, e_{m+n}, f_n)$; so $x \in S_n'$. Thus we have a map $\alpha: Q_{m+2n} \rightarrow S_n' \times_{\mathfrak{sl}_{m+2n}} T^* \mathcal{B}_n$ given by $\alpha(L_1, \cdots, L_{m+2n}) = (PzP^{-1}, (P(L_1), P(L_2), \cdots, P(L_{m+2n})))$.

For the converse direction, from the below Lemma \ref{lin} we know that given $x \in S_n' \cap \mathcal{N}_n$ there exists a unique $z$-stable subspace $L_{m+2n} \subset V_{m,n}$ such that $P L_{m+2n} = W_{m,n}$ and $PzP^{-1} = x$; call this subspace $L_{m+2n} = \Theta(x)$. We have an isomorphism $P: \Theta(x) \simeq W_{m,n}$. Thus given an element $((0 \subset V_1 \subset \cdots \subset V_{m+2n}), x) \in S_n' \times_{\mathfrak{sl}_{m+2n}} T^* \mathcal{B}_n$, let $\beta(x)=(0 \subset P^{-1}V_1 \subset P^{-1}V_2 \subset \cdots \subset \Theta_x)$. It is clear that $\alpha$ and $\beta$ are inverse to one another. 
\end{proof}


\subsection{The varieties $X_{n,i}$}

We have a $\mathbb{P}^1$-bundle 
\begin{align*}
\pi_{n,i}: X_{n,i} \rightarrow S_n \times_{\mathfrak{sl}_{m+2n}} T^*\mathcal{P}_{i,n} \simeq S_{n-1} \times_{\mathfrak{sl}_{m+2n-2}} T^*\mathcal{B}_{n-1} = U_{n-1},
\end{align*}
 and the embedding of the divisor $j_{n,i}: X_{n,i} \rightarrow S_n \times_{\mathfrak{sl}_{m+2n}} T^*\mathcal{B}_{n} = U_n$. Thus we can view $X_{n,i}$ as a subvariety of $U_{n-1} \times U_n$.

\begin{lemma} \label{lemma-transverse} 
 For $i \neq j$, the varieties $X_{n,i}$ and $X_{n,j}$ intersect transversely inside $U_n$. 
\end{lemma}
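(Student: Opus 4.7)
The plan is to realise each $X_{n,i}$ as the zero locus of an explicit section of a line bundle on $U_n$, and then to check, by exhibiting tangent vectors, that the two section differentials are linearly independent at every point of $X_{n,i}\cap X_{n,j}$.

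First, on $U_n$ the tautological flag gives line bundles $\mathcal{L}_k := V_k/V_{k-1}$, and the fact that $xV_{k+1}\subset V_k$ for every point of $U_n$ produces natural bundle maps $\bar x_k:\mathcal{L}_{k+1}\to \mathcal{L}_k$, equivalently sections $\sigma_k \in H^0(U_n,\mathcal{L}_{k+1}^{\vee}\otimes \mathcal{L}_k)$ whose zero loci are exactly the $X_{n,k}$. Since $U_n$ is smooth, transversality at $p\in X_{n,i}\cap X_{n,j}$ is equivalent to surjectivity of
\begin{equation*}
(d_p\sigma_i,\, d_p\sigma_j): T_p U_n \longrightarrow \bigl(\mathcal{L}_{i+1}^{\vee}\otimes \mathcal{L}_i\bigr)\big|_p \oplus \bigl(\mathcal{L}_{j+1}^{\vee}\otimes \mathcal{L}_j\bigr)\big|_p.
\end{equation*}
I will produce in each case two tangent vectors at $p$ which pair against $d_p\sigma_i$ and $d_p\sigma_j$ as $(1,0)$ and $(0,1)$.

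Assume without loss of generality $i<j$. If $|i-j|\geq 2$, then the defining equations of $X_{n,i}$ and $X_{n,j}$ involve the disjoint index triples $\{i-1,i,i+1\}$ and $\{j-1,j,j+1\}$. A deformation of $V_i$ inside $V_{i+1}$, with all other $V_k$ and $x$ fixed, lies in $T_pU_n$: at $p\in X_{n,i}$ one has $xV_{i+1}\subset V_{i-1}$, so every flag condition is preserved to first order. Such a deformation changes $\sigma_i$ but leaves $\sigma_j$ unchanged, and symmetrically for $V_j$, giving the two required vectors.

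The consecutive case $j=i+1$ is the main obstacle, since both $\sigma_i$ and $\sigma_{i+1}$ involve the subspace $V_{i+1}$, so deformations of $V_{i+1}$ affect both sections simultaneously. For this case I plan to exploit the extra freedom in the slice $S_n$ using the explicit coordinates from the proof of Lemma \ref{lin}: the parameters $a_k,b_k,c_k,d_k$ control the two distinguished rows of $x$, and I will show that one can deform $x$ along directions coupling $V_{i+1}$ to $V_i/V_{i-1}$ and, independently, $V_{i+2}$ to $V_{i+1}/V_i$, while correcting the flag so the pair $(V_\bullet,x)$ remains in $U_n$. The resulting tangent vectors then separate the two differentials. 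The bookkeeping in this case --- verifying that the slice directions are realised as tangent vectors to $U_n$, and that the cross terms vanish --- reduces to a finite linear-algebra calculation in the basis $e_1,\ldots,e_{m+n},f_1,\ldots,f_n$, and is the step which has no immediate analogue in \cite{ck}.
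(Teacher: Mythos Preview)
Your argument in the non-consecutive case $|i-j|\geq 2$ does not work. You propose to deform $V_i$ inside $V_{i+1}$ while keeping $x$ and all other $V_k$ fixed, and claim that ``such a deformation changes $\sigma_i$''. But it does not: the section $\sigma_i$ vanishes precisely when $xV_{i+1}\subset V_{i-1}$, and this condition makes no reference to $V_i$. If $xV_{i+1}\subset V_{i-1}$ at $p$, then for \emph{any} choice of $V_i'$ with $V_{i-1}\subset V_i'\subset V_{i+1}$ the induced map $V_{i+1}/V_i'\to V_i'/V_{i-1}$ is still zero. So the $V_i$-deformation you describe is tangent to $X_{n,i}$, not normal to it, and $d_p\sigma_i$ kills it. (This is also why $X_{n,i}\to U_{n-1}$ is a $\mathbb{P}^1$-bundle: the fibre is exactly the freedom in choosing $V_i$.)

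The actual normal direction to $X_{n,i}$ requires perturbing $x$ so that $xV_{i+1}\not\subset V_{i-1}$, and the whole content of the lemma is that such a perturbation can be made \emph{compatible with the slice condition} $x\in S_n$. This is not specific to the consecutive case; it is the obstruction in every case, and it is why the lemma does not follow from the Cautis--Kamnitzer analogue. The paper's proof handles this uniformly: it identifies
\[
T_{(g,x)}(U_n)\simeq\frac{\{(X,Y)\in\mathfrak{g}\oplus C_n : [X,\tilde{x}]+Y\in g\cdot\mathfrak{n}\}}{g\cdot\mathfrak{b}\oplus 0},
\]
and similarly with $\mathfrak{n}$ replaced by $\mathfrak{n}^i$, $\mathfrak{n}^j$ for the two divisors. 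The crucial input is that the map $(X,Y)\mapsto [X,\tilde{x}]+Y$ from $\mathfrak{g}\oplus C_n$ onto $\mathfrak{g}$ is surjective --- this is exactly the defining transversality property of the Mirkovic--Vybornov slice --- which immediately gives that the two tangent spaces are distinct codimension-one subspaces. Your proposal never invokes this property in the non-consecutive case, and the ``finite linear-algebra calculation'' you defer in the consecutive case would, if carried out correctly, amount to rediscovering it there.
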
 
\begin{proof} 
We will view $U_n$ (and also $X_{n,i}$ and $X_{n,j}$) as a subvariety of $G \times_B \mathfrak{n}$, and compute tangent spaces to $X_{n,i}$ and $X_{n,j}$ at points in $X_{n,i} \cap X_{n,j}$ to show transversality. 

Given $(g, x) \in G \times_B \mathfrak{n}$; first we will calculate the tangent space $T_{(g,x)} (G \times_B \mathfrak{n})$. 
Given $X_1 \in \mathfrak{g}, X_2 \in \mathfrak{n}$, a curve through $(g, x)$ in $G \times \mathfrak{n}$ with tangent direction 
$(g \cdot X_1, X_2)$ is $(g \cdot \text{exp}(\epsilon X_1), x + \epsilon X_2)$. 
Infinitesimally, $(g \cdot \text{exp}(\epsilon X_1), x + \epsilon X_2) = (g, x)$ in $G \times_B \mathfrak{n}$ 
provided that $X_1 \in \mathfrak{b}$ (ie. $\text{exp}(\epsilon X_1) \in B$), and 
$$
\text{exp}( \epsilon X_1) (x + \epsilon X_2) \text{exp}( - \epsilon X_1) \approx x
$$ 
Discarding non-linear powers of $\epsilon$, the latter translates to $x + \epsilon(X_2 + [X_1, x]) = x$, i.e. $X_2 = - [X_1, x]$. 
Thus the kernel of the map $\mathfrak{g} \oplus \mathfrak{n} = T_{(g, x)}(G \times \mathfrak{n}) \twoheadrightarrow T_{(g, x)}(G \times^B \mathfrak{n})$
is the subspace $\{ (X, -[X, x]) | X \in \mathfrak{b} \}$, so: 
$$
 T_{(g,x)} (G \times^B \mathfrak{n}) \simeq \frac{\mathfrak{g} \oplus \mathfrak{n}}{\{ (X, -[X, x]) \: | \: X \in \mathfrak{b} \}}. 
$$
Suppose $(g, x) \in G \times_B \mathfrak{n}$ lies in $U_n$; or equivalently, that 
$\tilde{x} := gxg^{-1} \in S_n$. Now given $(X_1, X_2) \in T_{(g,x)} (G \times_B \mathfrak{n})$, we have that 
$(X_1, X_2) \in T_{(g,x)} (U_n)$ when the curve $(g \cdot \text{exp}(\epsilon X_1), x + \epsilon X_2)$ lies in $U_n$. 
This happens precisely when $g \cdot \text{exp}(\epsilon X_1) (x + \epsilon X_2) \text{exp}( - \epsilon X_1) \cdot g^{-1} \in S_n$ (infinitesimally). 
Discarding non-linear powers of $\epsilon$, this is equivalent to saying that 
$$
g \cdot (x + \epsilon (X_2 + [X_1, x]) \cdot g^{-1} \in S_n.
$$
 Since $gxg^{-1} \in S_n$, this is equivalent to $X_2 + [X_1, x] \in g^{-1} \cdot C_n \cdot g$ (recall that $S_n = z_n + C_n$ where $C_n$ is a vector subspace). Thus: 
\begin{align*} 
T_{(g,x)} (U_n) \simeq & \frac{ \{ (X_1, X_2) \in \mathfrak{g} \oplus \mathfrak{n} \: | \: X_2 + [X_1, x] \in g^{-1} C_n g \} }{\{ (X, -[X, x]) \: | \: X \in \mathfrak{b} \}} \\
\simeq & \frac{ \{ (X, Y) \in \mathfrak{g} \oplus C_n \: | \: [X, \tilde{x}] + Y \in g \cdot \mathfrak{n} \} }{ g \cdot \mathfrak{b} \oplus 0 } 
\end{align*}
 For the last isomorphism, use the substitution $X = - g X_1 g^{-1}, Y = g (X_2 + [X_1, x]) g^{-1}$. Recall from the discussion in 
Section $1.4$ of \cite{mv} that the map $\pi: \mathfrak{g} \oplus C_n \rightarrow \mathfrak{g}, \pi(X, Y) = [X, \tilde{x}] + Y$ is surjective. Hence: 
\begin{align*} 
\text{dim}(T_{g,x}(U_n)) = \text{dim}(\mathfrak{n}) + \text{dim}(C_n) - \text{dim}(\mathfrak{b}) 
\end{align*}
In particular, this shows that $U_n$ is smooth. Now suppose that $(g, x) \in X_{n,i} \cap X_{n,j}$. 
It is clear that $X_{n,i} = U_n \cap (G \times^B \mathfrak{n}^i)$, where $\mathfrak{n}^i \subset \mathfrak{n}$ is the nilradical of the minimal parabolic 
corresponding to $i$. The above argument is valid after replacing $\mathfrak{n}$ with $\mathfrak{n}^i$, and we obtain: 
$$
T_{(g,x)} (X_{n,i}) \simeq \frac{ \{ (X, Y) \in \mathfrak{g} \oplus C_n \: | \: [X, \tilde{x}] + Y \in g \cdot \mathfrak{n}^i \} }{ g \cdot \mathfrak{b} \oplus 0 }
$$ 
$$
T_{(g,x)} (X_{n,j}) \simeq \frac{ \{ (X, Y) \in \mathfrak{g} \oplus C_n \: | \: [X, \tilde{x}] + Y \in g \cdot \mathfrak{n}^j \} }{ g \cdot \mathfrak{b} \oplus 0 }
$$ 
Using the surjectivity of $\pi$, it is clear that $T_{(g,x)} (X_{n,i})$ and $T_{(g,x)} (X_{n,j})$ are distinct co-dimension $1$ subspaces in 
$T_{(g,x)} (U_n)$. Hence $T_{(g,x)} (X_{n,i}) + T_{(g,x)} (X_{n,j}) = T_{(g,x)} (U_n)$, and $X_{n,i}$ and $X_{n,j}$ intersect transversely in $U_n$. 
\end{proof}

\begin{corollary} \label{transverse2} 
The following intersections are transverse: \begin{enumerate} \item $\pi_{12}^{-1}(X_{n, i}) \cap \pi_{23}^{-1}(X_{n,j})$ inside $U_{n-1} \times U_n \times U_{n-1}$ for $i \neq j$. \item $\pi_{12}^{-1}(X_{n,i}) \cap \pi_{23}^{-1}(X_{n+1,j})$ inside $U_{n-1} \times U_n \times U_{n+1}$.  
\end{enumerate} 
\end{corollary}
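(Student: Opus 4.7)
The plan is to verify transversality by analyzing tangent spaces at each point of intersection. Since the projections $\pi_{12}, \pi_{23}$ are smooth (as coordinate projections from products), the tangent space to $\pi_{ij}^{-1}(X_{n,k})$ at a point $(p_1, p_2, p_3)$ decomposes as the tangent space to $X_{n,k}$ at the relevant pair, direct summed with the full tangent space of the ``missing'' factor. The key structural fact is that because $j_{n,k}$ is a divisor embedding into $U_n$ and $\pi_{n,k}$ is a $\mathbb{P}^1$-bundle over $U_{n-1}$, the subvariety $X_{n,k} \subset U_{n-1} \times U_n$ is precisely the graph of the composite $\phi_{n,k} := \pi_{n,k} \circ j_{n,k}^{-1} : j_{n,k}(X_{n,k}) \to U_{n-1}$. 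Hence $T_{(p_1,p_2)} X_{n,k} = \{(d\phi_{n,k}(\xi), \xi) \mid \xi \in T_{p_2} j_{n,k}(X_{n,k})\}$, and $d\phi_{n,k}$ is surjective (being the differential of a $\mathbb{P}^1$-bundle).

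For part (1), given an arbitrary $(v_1, v_2, v_3) \in T(U_{n-1} \times U_n \times U_{n-1})$, I would apply Lemma \ref{lemma-transverse} to decompose the middle component as $v_2 = \xi + \xi'$ with $\xi \in T_{p_2} j_{n,i}(X_{n,i})$ and $\xi' \in T_{p_2} j_{n,j}(X_{n,j})$; this is the only place where the hypothesis $i \neq j$ enters. Then the vector $(d\phi_{n,i}(\xi), \xi, v_3 - d\phi_{n,j}(\xi'))$ lies in $T\pi_{12}^{-1}(X_{n,i})$ and the vector $(v_1 - d\phi_{n,i}(\xi), \xi', d\phi_{n,j}(\xi'))$ lies in $T\pi_{23}^{-1}(X_{n,j})$, with their sum equal to $(v_1, v_2, v_3)$. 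The ``free'' last factor of $T\pi_{12}^{-1}(X_{n,i})$ and the ``free'' first factor of $T\pi_{23}^{-1}(X_{n,j})$ absorb $v_3$ and $v_1$ respectively.

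For part (2), no condition on $i, j$ is required. Using that $\phi_{n+1,j}: j_{n+1,j}(X_{n+1,j}) \to U_n$ is a $\mathbb{P}^1$-bundle, its differential maps $T_{p_3} j_{n+1,j}(X_{n+1,j})$ surjectively onto $T_{p_2} U_n$, so I can pick $\xi' \in T_{p_3} j_{n+1,j}(X_{n+1,j})$ with $d\phi_{n+1,j}(\xi') = v_2$. The decomposition $(v_1, v_2, v_3) = (0, 0, v_3 - \xi') + (v_1, v_2, \xi')$ then realizes $(v_1, v_2, v_3)$ as a sum of tangent vectors, the first in $T\pi_{12}^{-1}(X_{n,i})$ (choosing $\xi = 0$ and free $\eta = v_3 - \xi'$) and the second in $T\pi_{23}^{-1}(X_{n+1,j})$ (free $\zeta = v_1$ together with the chosen $\xi'$).

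The main bookkeeping point is the orientation convention: $X_{n,j}$ is defined as a subvariety of $U_{n-1} \times U_n$, but $\pi_{23}^{-1}(X_{n,j})$ requires interpreting it inside $U_n \times U_{n-1}$ (factors flipped), for which the graph description of $X_{n,j}$ is symmetric in the required way. Once this is reconciled, the argument is mechanical, drawing on Lemma \ref{lemma-transverse} for part (1) and on the $\mathbb{P}^1$-bundle structure of the projection $\pi_{n+1,j}$ for part (2).
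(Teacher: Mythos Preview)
Your argument is correct and follows essentially the same route as the paper. The paper's one-line proof invokes Lemma~5.3 of \cite{ck}, which is precisely the general tangent-space criterion you have unpacked by hand: for part~(1) the input is the transversality of $X_{n,i}$ and $X_{n,j}$ inside $U_n$ (Lemma~\ref{lemma-transverse}), and for part~(2) it is the submersion property of the $\mathbb{P}^1$-bundle $\pi_{n+1,j}:X_{n+1,j}\to U_n$, exactly as you use them.
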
 
\begin{proof} Both statements follow using Lemma $5.3$ from \cite{ck}; for the first, we also need Lemma \ref{lemma-transverse}. \end{proof}

\section{Tangles}\label{section-tangles}
\subsection{Affine tangles}


\begin{definition} If $p \equiv q \pmod 2$, a $(p,q)$ affine tangle is an embedding of $\frac{p+q}{2}$ arcs and a finite number of circles into the region $\{ (x,y) \in \mathbb{C} \times \mathbb{R} | 1 \leq |x| \leq 2 \}$, such that the end-points of the arcs are $(1,0),(\zeta_p, 0), \cdots , (\zeta_p^{p-1}, 0), (2, 0), (2\zeta_q,0), \cdots, (2 \zeta_q^{q-1},0)$ in some order; here $\zeta_k = e^{\frac{2\pi i}{k}}$. 
\end{definition}

\begin{remark} Given a $(p,q)$ affine tangle $\alpha$, and a $(q,r)$ affine tangle $\beta$,  
we can compose them using scaling and concatenation. This composition is associative up to isotopy.
The composition $\beta \circ \alpha$ is a $(p,r)$ affine tangle. \end{remark}

\begin{definition} Given $1 \leq i \leq n$, define the following affine tangles: \begin{itemize} \item Let $g_n^i$ denote the $(n-2, n)$ tangle with an arc connecting $(2\zeta_n^i,0)$ to $(2\zeta_n^{i+1},0)$. Let other strands connect $(\zeta_{n-2}^k,0)$ to $(2\zeta_n^k,0)$ for $1\leq k<i$ and $(\zeta_{n-2}^k,0)$ to $(2\zeta_n^{k+2},0)$ for $i+1<k\leq n-2$.
\item Let $f_n^i$ denote the $(n, n-2)$ tangle with an arc connecting $(\zeta_{n}^i,0)$ and $(\zeta_{n}^{i+1},0)$. Let other strands connect $(\zeta_{n}^k,0)$ to $(2\zeta_{n-2}^k,0)$ for $1\leq k<i$ and $(\zeta_{n}^k,0)$ to $(2\zeta_{n-2}^{k-2},0)$ for $i+1<k\leq n-2$.
\item Let $t_n^i(1)$ (respectively, $t^i_n(2)$) denote the $(n,n)$ tangle in which a strand connecting $(\zeta_n^{i},0)$ to $(2\zeta_{n}^{i+1},0)$ passes above (respectively, beneath) a strand connecting $(\zeta_n^{i+1},0)$ to $(2\zeta_n^{i},0)$. Let other strands connect $(\zeta_n^k,0)$ to $(2\zeta_n^k,0)$ for $k \ne i, i+1$.
 \item Let $r_n$ denote the $(n,n)$ tangle connecting $(\zeta_n^j, 0)$ to $(2\zeta_n^{j-1}, 0)$ for each $1 \leq j \leq n$ (clockwise rotation of all strands), and let $r_n'$ denote the $(n,n)$ tangle connecting $(\zeta_n^j, 0)$ to $(2\zeta_n^{j+1}, 0)$ for each $1 \leq j \leq n$ (counterclockwise rotation). 
\end{itemize} 
\end{definition}
The figure below has diagrams depicting some of these elementary tangles; see $s_4^4$ is defined below in Definition \ref{sni}. 

\includegraphics[scale=0.65]{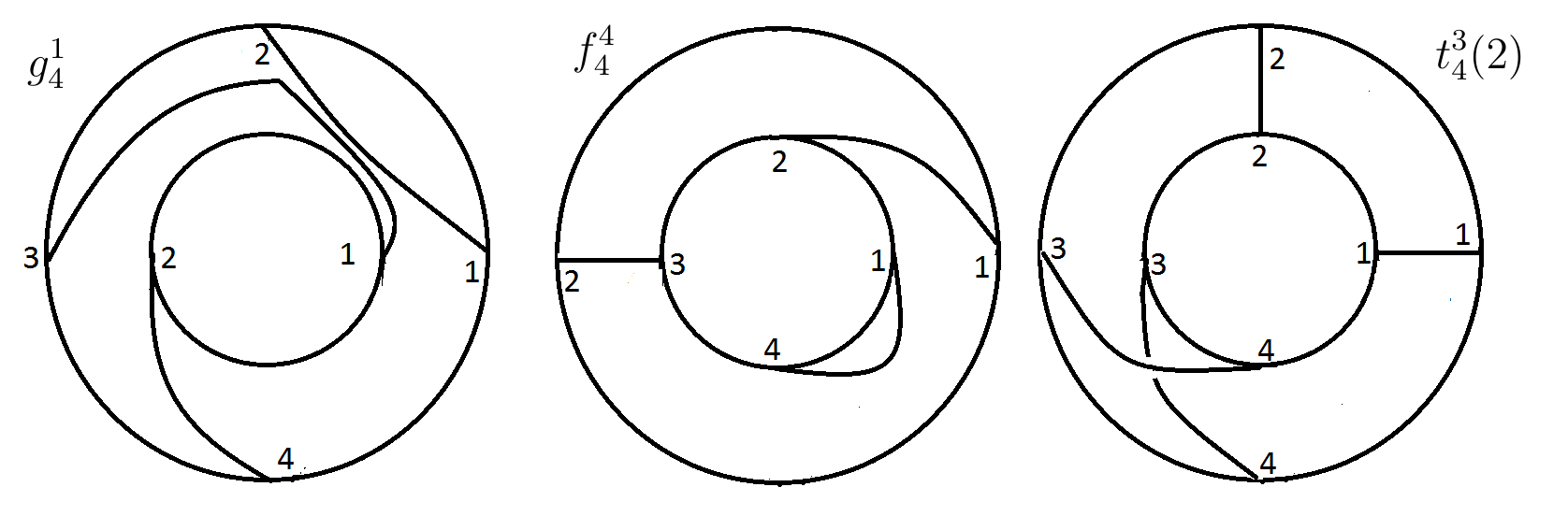}

\includegraphics[scale=0.65]{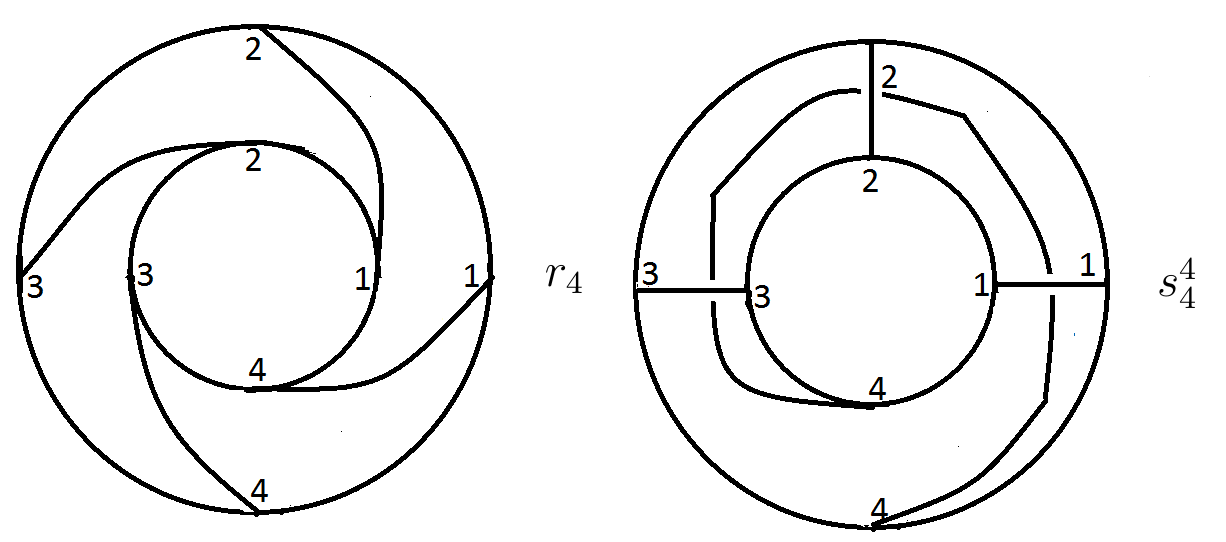}

\begin{definition} Define a linear tangle to be an affine tangle that is isotopic to a product of the generators $g_n^i, f_n^i, t_n^i(1)$ and $t_n^i(2)$ for $i \ne n$. \end{definition}

\begin{remark} Linear tangles can be moved away from the half-line $e^{i\epsilon}\mathbb{R}_{\geq 0}$ where $\epsilon$ is a small positive number. If we cut the annulus $1\leq |z|\leq 2$ by that line and apply the logarithm map, linear tangles turn into the usual tangles that live between two parallel lines.
\end{remark}

\begin{lemma}\label{lemma-generators}
Any affine tangle is isotopic to a composition of the above generators.
\end{lemma}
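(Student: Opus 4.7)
The plan is to apply the standard Morse-theoretic decomposition argument to the affine setting, with the rotation generators $r_n$, $r_n'$ playing the role that conjugation plays when the elementary events occur away from the standard angular position.

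First I would put the tangle in generic position with respect to the radial ``height'' function $|x|$: after a small isotopy $|x|$ restricted to each arc is Morse with isolated critical points (corresponding to cups and caps), the projection to the annulus has only transverse double points (corresponding to crossings), and all critical radii of $|x|$ together with the radii at which crossings occur are distinct. Slicing the ambient region $\{1 \leq |x| \leq 2\}$ by radii slightly above and below each such radius decomposes the tangle into a finite radial stack of sub-annular slices, each of three types: (a) a braid slice containing only monotone strands with no crossings, (b) a slice containing exactly one crossing, or (c) a slice containing exactly one cup or cap.

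The next step is to express each slice as a composition of the listed generators. A slice of type (a) on $n$ strands is determined up to isotopy by a cyclic permutation of its endpoints together with an integer winding number around the central hole, so it is a power of $r_n$ (or of $r_n'$). For a slice of type (b), the crossing occurs between two adjacent strands at some angular position which may not be the standard one; I would isotope the crossing into standard position using an appropriate rotation, writing the slice as $r_n^{a} \cdot t_n^i(\epsilon) \cdot r_n^{-a}$ for a suitable integer $a$ and sign $\epsilon \in \{1,2\}$. For a slice of type (c) the same idea gives $r_n^{a} \cdot g_n^i \cdot r_{n-2}^{-a}$ for a cup and $r_{n-2}^{a} \cdot f_n^i \cdot r_n^{-a}$ for a cap, with the different indices reflecting the change in the number of strands across the critical radius.

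The main obstacle is the bookkeeping for type (b) and (c) slices: one must check that once the single crossing or cup/cap of the slice is rotated into standard position, the remainder of the slice really does become a pure rotation (a power of $r_n$). This reduces to the fact that an annular braid on $n$ strands with no crossings and no cups or caps is classified by a cyclic permutation and a winding number, which in turn forces the residual piece of the slice to be a power of $r_n$ (or $r_n'$). Gluing these per-slice decompositions stacks radially to recover the original affine tangle as a composition of $g_n^i$, $f_n^i$, $t_n^i(1)$, $t_n^i(2)$, $r_n$, and $r_n'$, as desired.
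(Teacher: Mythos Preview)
Your argument is correct and is essentially the same as the paper's: both put the tangle in generic position with respect to the radial function $|x|$, slice by concentric circles so that each annular piece contains at most one crossing or critical point, and identify each piece as a $g_n^i$, $f_n^i$, or $t_n^i(p)$ composed with a power of $r_n$. The only cosmetic difference is that you separate out the pure-rotation slices as a distinct type (a), whereas the paper absorbs the rotations directly into the elementary-move slices.
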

\begin{proof}
For a curve in $\mathbb{C}$, define its affine critical point as a point where this curve is tangent to a circle with center at $0$.
We can adjust a tangle within its isotopy class so that its projection onto $\mathbb{C}$ has a finite number of transversal crossings and affine critical points. We can also assume that no two of these points lie on the same circle with center at $0$. Cut the projection of the tangle by circles with center at $0$ into annuli so that each annulus contains only one crossing or affine critical point. We can further adjust the tangle so that we have a tangle inside each annulus, and by construction these tangles have to be $g_n^i$, $f_n^i$, or $t_n^i(p)$, possibly composed with a power of $r_n$. 
\end{proof}

\begin{definition}\label{DefinitionATan} Let $\textbf{ATan}$ (resp. $\textbf{Tan}$) denote the category with objects $k$ for $k \in \mathbb{Z}_{\geq 0}$, and the set of morphisms between $p$ and $q$ consist of all affine (resp. linear) $(p, q)$ tangles. 
\end{definition}

 In the category $\textbf{ATan}$ we record the following relations between the above generators; here let $1 \leq i \leq n-1, 1 \leq p,q \leq 2, k \geq 2$:
\begin{enumerate} 
\item\label{ATanMovesFirst} (Reidemeister $0$) $f_n^i \circ g_n^{i+1} = f_n^{i+1} \circ g_n^i = \text{id}$  
\item (Reidemeister $1$) $f_n^i \circ t_n^{i \pm 1}(2) \circ g_n^i = f_n^i \circ t^{i \pm 1}_n(1) \circ g_n^i = \text{id}$
\item (Reidemeister $2$) $t_n^i(1) \circ t^i_n(2) = t^i_n(2) \circ t_n^i(1) = id$ 
\item (Reidemeister $3$) $t_n^i(1) \circ t_n^{i+1}(1) \circ t_n^i(1) = t_n^{i+1}(1) \circ t_n^i(1) \circ t_n^{i+1}(1)$. 
\item (Cup-cup isotopy) $g_{n+2}^{i+k} \circ g_n^i = g_{n+2}^i \circ g_n^{i+k-2}$ 
\item (Cap-cap isotopy) $f_n^{i+k-2} \circ f_{n+2}^i = f_n^i \circ f_{n+2}^{i+k}$ 
\item (Cup-cap isotopy) $g_n^{i+k-2} \circ f_n^i = f_{n+2}^i \circ g_{n+2}^{i+k}, g_n^i \circ f_n^{i+k-2} = f_{n+2}^{i+k} \circ g_{n+2}^i$ 
\item (Cup-crossing isotopy) $g_n^i \circ t_{n-2}^{i+k-2}(q) = t_n^{i+k}(q) \circ g_n^i, g_n^{i+k} \circ t_{n-2}^i(q) = t_n^i(q) \circ g_n^{i+k}$ 
\item (Cap-crossing isotopy) $f_n^i \circ t_n^{i+k}(q) = t_{n-2}^{i+k-2}(q) \circ f_n^i, f_n^{i+k} \circ t_n^i(q) = t_{n-2}^{i}(q) \circ f_n^{i+k}$ 
\item (Crossing-crossing isotopy) $t_n^i(p) \circ t_n^{i+k}(q) = t_n^{i+k}(q) \circ t_n^{i}(p)$ 
\item\label{ATanMovesLastLinear} (Pitchfork move) $t_n^i(1) \circ g_{n}^{i+1} = t_n^{i+1}(2) \circ g_n^i, t_n^i(2) \circ g_n^{i+1} = t_n^{i+1}(1) \circ g_n^i$. 
\item (Rotation) $r_n \circ r_n' = r_n' \circ r_n = id$ 
\item \label{ATanMovesFirstAffine}(Cap rotation) $r_{n-2}' \circ f_n^i \circ r_n = f_n^{i+1}, f_n^{n-1} \circ r_n^2 = f_n^1$ 
\item (Cup rotation) $r_n' \circ g_n^i \circ r_{n-2} = g_n^{i+1}, r_n'^2 \circ g_n^{n-1} = g_n^1$ 
\item\label{ATanMovesLast} (Crossing rotation) $r_n' \circ t_n^i(q) \circ r_n = t_n^{i+1}(q), r_n'^2 \circ t_n^{n-1}(q) \circ r_n^2 = t_n^1(q)$. \end{enumerate}

By Lemma $4.1$ from \cite{ck}, any relation between linear tangles can be expressed as a composition of the relations
\eqref{ATanMovesFirst}-\eqref{ATanMovesLastLinear} above. We can generalize that to affine tangles:

\begin{proposition} \label{suff} Any relation between affine tangles can be expressed as a composition of the relations 
\eqref{ATanMovesFirst}-\eqref{ATanMovesLast} above.\end{proposition}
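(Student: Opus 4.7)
The plan is to reduce the affine case to the known linear case (Lemma 4.1 of \cite{ck}) by first using the rotation-related relations \eqref{ATanMovesFirstAffine}--\eqref{ATanMovesLast} to bring every affine tangle word into a normal form. Specifically, I would show that any word $W$ in the generators of $\textbf{ATan}$ is equivalent, modulo relations \eqref{ATanMovesFirst}--\eqref{ATanMovesLast}, to a word of the form $L \cdot r_n^k$, where $L$ is a composition of linear generators (those $g_n^i, f_n^i, t_n^i(p)$ with $i \neq n$) and $k \in \mathbb{Z}$, with the convention $r_n^{-1} = r_n'$. The procedure is to move every rotation factor to the rightmost position using the commutation relations \eqref{ATanMovesFirstAffine}, (14), (15), which shift indices by $\pm 1$; whenever a shifted index equals $n$, the ``wrap-around'' identities $f_n^{n-1} \circ r_n^2 = f_n^1$, $(r_n')^2 \circ g_n^{n-1} = g_n^1$, and $(r_n')^2 \circ t_n^{n-1}(q) \circ r_n^2 = t_n^1(q)$ convert the offending generator back into a linear one at the cost of an extra $r_n^{\pm 2}$. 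Finally relation (12) consolidates all rotations into a single power.

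Once both sides of a relation $W_1 = W_2$ are in normal form, say $L_1 \cdot r_n^{k_1} = L_2 \cdot r_n^{k_2}$ as affine tangles, I would separate the rotational from the linear data by means of a topological invariant. Each strand of $r_n^k$ winds by $k/n$ of a full turn around the central hole, while a linear tangle contributes nothing to the total winding. Taking, say, the sum of the homology classes of the strand projections in $H_1(\mathbb{C}^*;\mathbb{Z})$ yields an isotopy invariant equal to $k$. This forces $k_1 = k_2$, so the relation reduces to $L_1 = L_2$ as affine tangles.

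The final step is to upgrade the affine-tangle equality $L_1 = L_2$ of two linear tangles to a linear-tangle equality, after which Lemma 4.1 of \cite{ck} produces the desired derivation using relations \eqref{ATanMovesFirst}--\eqref{ATanMovesLastLinear}. For this I would appeal to the picture in the remark following Definition \ref{DefinitionATan}: both $L_1$ and $L_2$ can be moved off the half-line $e^{i\epsilon}\mathbb{R}_{\geq 0}$, so they live in the strip obtained by cutting the annulus. Any affine isotopy between them can be arranged transverse to this half-line; since neither endpoint has nontrivial winding, each intersection of the isotopy trace with the half-line can be cancelled in pairs, yielding an isotopy entirely inside the strip.

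The main obstacle is the last pushing-off argument: justifying that an affine isotopy between two tangles of zero winding can be deformed to remain in the strip. This is essentially the statement that the inclusion of the space of embeddings of $\frac{p+q}{2}$ arcs and circles in the strip into the analogous space for the annulus is a bijection on those path components with trivial winding. It should follow from the fibration whose total space is the annulus embedding space and whose base tracks the winding around the hole, but making this rigorous (e.g.\ verifying general position and that crossing cancellations can be realized by the listed moves) is where all the technical work concentrates; the rest of the proof is essentially bookkeeping with the rotation relations.
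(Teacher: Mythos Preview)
Your normal-form step has a genuine gap. The claim that every word reduces via \eqref{ATanMovesFirst}--\eqref{ATanMovesLast} to $L\circ r_p^{\,k}$ with $L$ built only from generators with index $i\neq n$ already fails for $(0,2)$-tangles: take $r_2\circ g_2^1$. Here $p=0$, so $r_0^k$ is the empty tangle and the normal form would have to be a linear word $L$; but $r_2\circ g_2^1$ is the cup joining the two outer points around the far side of the annulus, and no word in $g_2^1$ alone (the only linear $(0,2)$ generator) is isotopic to it. The wrap-around identity for $n=2$ gives only $r_2^2\circ g_2^1=g_2^1$, hence $r_2\circ g_2^1=r_2'\circ g_2^1$, and the rotation is never eliminated. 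The same obstruction arises whenever a cup sits between outer positions $n$ and $1$: a rotation on the inner side cannot move it across the cut, so there is no linear $L$ absorbing it. (Pushing rotations to the outer side instead produces the mirror problem for caps.) Your winding invariant is also not well-defined as written, since the strands are arcs, not cycles in $\mathbb{C}^{*}$.

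The paper's argument avoids both the normal form and the pushing-off problem you flag as the main obstacle. It enlarges the generating set to allow the cyclic index $i=n$ (so $g_n^n,f_n^n,t_n^n(p)$ are admitted) and then observes that any isotopy of affine tangles factors as a composition of isotopies each of which fixes some radial segment $[(\zeta,0),(2\zeta,0)]$; cutting along that segment makes such an isotopy linear, so it is a composite of relations \eqref{ATanMovesFirst}--\eqref{ATanMovesLastLinear} in the enlarged index range by the cited result of Cautis--Kamnitzer. The instances of \eqref{ATanMovesFirst}--\eqref{ATanMovesLastLinear} involving $i=n$ are finally derived from \eqref{ATanMovesFirst}--\eqref{ATanMovesLast} by direct computation. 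In short, the paper trades your algebraic normalization for a geometric decomposition of the isotopy itself.
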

\begin{proof} First, let us reduce any relation to a composition of relations \eqref{ATanMovesFirst}-\eqref{ATanMovesLastLinear} involving $g_n^i$, $f_n^i$, $t_n^i(p)$ for $1\leq i\leq n$ (for the definition of $g_n^n$, $f_n^n$, $t_n^n(p)$ see the proof of Lemma \ref{lemma-generators}). Then, we can express the relations \eqref{ATanMovesFirst}-\eqref{ATanMovesLastLinear} involving $g_n^n$, $f_n^n$, $t_n^n(p)$ using relations \eqref{ATanMovesFirst}-\eqref{ATanMovesLast}, by a direct computation.

Let us call an isotopy {\it linear} if it fixes a segment of the form $[(\zeta, 0), (2\zeta,0)]$ for some $\zeta$. Note that a linear isotopy is a composition of elementary isotopies \eqref{ATanMovesFirst}-\eqref{ATanMovesLastLinear} (possibly involving $g_n^n$, $f_n^n$, $t_n^n(p)$) since the points where the tangle intersects  $[(\zeta, 0), (2\zeta,0)]$ stay fixed. Now, if two affine tangles are isotopic, then they are also isotopic through a composition of two linear isotopies, which completes the proof.
\end{proof}

For our purposes, it will be more convenient to replace the relations \eqref{ATanMovesFirstAffine}-\eqref{ATanMovesLast} by the equivalent set of defining relations below.

\begin{definition} \label{sni} Let $s_n^{i}$ denote the $(n,n)$-tangle with a strand connecting $(\zeta_j,0)$ to $(2 \zeta_j,0)$ for each $j$, and a strand connecting $(\zeta_i,0)$ to $(2 \zeta_i,0)$ passing clockwise around the circle, beneath all the other strands. \end{definition}
\begin{lemma} \label{affrelns} 
The following relations are equivalent to the relations \eqref{ATanMovesFirstAffine}-\eqref{ATanMovesLast} above. 
\begin{itemize} 
\item $s_n^n \circ g_{n}^i = g_n^i \circ s_{n-2}^{n-2}$, $\ s_{n-2}^{n-2} \circ f_n^i = f_n^i \circ s_n^n$, $\ s_n^n \circ t_n^i(p) = t_n^i(p) \circ s_n^n$; 
\item $f_{n}^{n-1} \circ s_n^n \circ t_{n}^{n-1}(2) \circ s_n^n \circ t_n^{n-1}(2)=f_n^{n-1}$; 
\item $s_n^n \circ t_n^{n-1}(2) \circ s_n^n \circ t_n^{n-1}(2) \circ g_n^{n-1} = g_n^{n-1}$; 
\item $t_n^{n-1}(2) \circ s_n^n \circ t_n^{n-1}(2) \circ s_n^n \circ t_n^{n-1}(2) = s_n^n \circ t_n^{n-1}(2) \circ s_n^n \circ t_n^{n-1}(2) \circ t_n^{n-1}(2)$.
\end{itemize} 
\end{lemma}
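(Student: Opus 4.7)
The plan is to show that the two sets of relations, when added to the linear relations \eqref{ATanMovesFirst}-\eqref{ATanMovesLastLinear}, cut out the same quotient of the free tangle category. The bridge between the two presentations is an explicit topological identification of $s_n^n$ as a word in $r_n$ and the crossings $t_n^i(p)$: geometrically, $s_n^n$ sends the last strand once around the annulus beneath all the others while keeping every other strand straight, so one can unwind this wrapping with $n-1$ successive crossings to express $s_n^n$ as $r_n \circ t_n^{n-1}(2) \circ t_n^{n-2}(2) \circ \cdots \circ t_n^{1}(2)$ (the precise factorization is determined by which strand is to pass beneath which, and is most easily verified by drawing the composition and comparing it with Definition \ref{sni}).

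For the direction \eqref{ATanMovesFirstAffine}-\eqref{ATanMovesLast} $\Rightarrow$ new relations, I would substitute the above expression for $s_n^n$ into each of the four new relations and reduce to identities among $r_n, g_n^i, f_n^i, t_n^i(p)$. The three commutation-style identities in the first bullet translate directly into the cap, cup, and crossing rotation statements in \eqref{ATanMovesFirstAffine}-\eqref{ATanMovesLast} after iterated application of cap-crossing, cup-crossing, and crossing-crossing isotopy to push $r_n$ past the crossings in the expansion of $s_n^n$. The two contraction relations in bullets two and three then reduce, after substitution and repeated use of the pitchfork and Reidemeister-$0$ moves, to the boundary identities $f_n^{n-1}\circ r_n^2 = f_n^1$ and $r_n'^{2}\circ g_n^{n-1} = g_n^1$: intuitively, the two copies of $s_n^n$ produce a full rotation that is then ``capped off'' into the affine cap (respectively ``plugged in'' to the affine cup) identity. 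The braid-like relation in bullet four is the most subtle; after substitution it becomes a relation purely among braid generators that can be reorganized, using crossing rotation and Reidemeister 3, into a tautology.

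For the converse direction, I would define $r_n$ inside the new presentation by inverting the Step 1 formula, i.e.\ $r_n := s_n^n \circ t_n^{1}(1) \circ \cdots \circ t_n^{n-1}(1)$, and $r_n'$ as the corresponding inverse word. The three commutation identities in bullet one then yield the cup, cap, and crossing rotation formulas in \eqref{ATanMovesFirstAffine}-\eqref{ATanMovesLast} after moving $s_n^n$ past the bulk of a diagram and cleaning up the leftover crossings using the linear relations. The boundary identities $f_n^{n-1}\circ r_n^2 = f_n^1$, $r_n'^{2}\circ g_n^{n-1} = g_n^1$, and $r_n'^{2}\circ t_n^{n-1}(q)\circ r_n^{2}=t_n^{1}(q)$ follow respectively from the contraction identities (bullets two, three) and from the braid-like identity (bullet four), with the $t_n^i(1)$-tails in the definition of $r_n$ peeling off via Reidemeister 2 and pitchfork moves.

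The main obstacle is the bookkeeping in the last paragraph: one must track the direction of each crossing (the choice of $t_n^i(1)$ vs.\ $t_n^i(2)$) when commuting $s_n^n$ past the strands, and verify that the braid-like relation in bullet four corresponds, after unwinding the definition of $r_n$, exactly to the crossing rotation identity together with the boundary relation $r_n'^{2}\circ t_n^{n-1}(q)\circ r_n^{2}=t_n^{1}(q)$. Once a consistent sign convention is fixed, all the steps reduce to local moves on tangle diagrams governed by the linear relations.
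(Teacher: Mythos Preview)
Your proposal follows essentially the same route as the paper: both hinge on the bridge identity expressing $r_n$ as $s_n^n$ followed by a chain of crossings, and then reduce everything to direct computation using the linear relations \eqref{ATanMovesFirst}--\eqref{ATanMovesLastLinear}. The paper records the formula as $r_n = s_n^n \circ t_n^{n-1}(2)\circ\cdots\circ t_n^{1}(2)$, which upon inversion gives $s_n^n = r_n \circ t_n^{1}(1)\circ\cdots\circ t_n^{n-1}(1)$; your stated factorization $s_n^n = r_n \circ t_n^{n-1}(2)\circ\cdots\circ t_n^{1}(2)$ has the crossing types reversed, so when you actually carry out the ``drawing'' you advertise you will need to correct this (your own hedge about the precise factorization is well placed). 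One small structural difference: the paper treats the direction ``old relations $\Rightarrow$ new relations'' as immediate, since the bulleted identities are visibly isotopies of affine tangles and hence hold in $\textbf{ATan}$ by Proposition~\ref{suff}, and only works out the converse direction by substitution (illustrated on relation~(13)); your plan to argue both directions purely by substitution is fine but slightly more laborious than necessary.
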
 
\begin{proof} 
It is straightforward to verify the  that we have the relation $r_n=s_n^n \circ t_n^{n-1}(2) \circ \cdots \circ t_n^1(2)$. It remains to see that the relations 
\eqref{ATanMovesFirstAffine}-\eqref{ATanMovesLast} then follow from the those listed in the statement of this Lemma, and the
relations \eqref{ATanMovesFirst}-\eqref{ATanMovesLastLinear}. This can be done by direct computation; as an example, see the below calculation for relation (13). 
\begin{align*} r'_{n-2} \circ f_n^i \circ r_n &= t_{n-2}^1(1) \circ \cdots \circ t_{n-2}^{n-3}(1) \circ (s_{n-2}^{n-2})^{-1} \circ f_n^i \circ s_n^n \circ t_n^{n-1}(2) \circ \cdots \circ t_n^1(2) \\ &= t_{n-2}^1(1) \circ \cdots \circ t_{n-2}^{n-3}(1) \circ (s_{n-2}^{n-2})^{-1} \circ s_{n-2}^{n-2} \circ f_n^i  \circ t_n^{n-1}(2) \circ \cdots \circ t_n^1(2) \\ &= t_{n-2}^1(1) \circ \cdots \circ t_{n-2}^{n-3}(1) \circ f_n^i \circ t_n^{n-1}(2) \circ \cdots \circ t_n^1(2) = f_n^{i+1} \end{align*} \end{proof}

\subsection{Framed tangles}


All preceding constructions may be carried out for framed tangles. Define the generators
$\hat{g}_n^i$ (resp. $\hat{f}_n^i$, resp. $\hat{t}_n^i(l)$, resp. $\hat{r}_n^i$) as
tangles $g_n^i$ (resp. ${f}_n^i$, resp. ${t}_n^i(l)$, resp. ${r}_n^i$) with blackboard framing. Introduce new generators $\hat{w}_n^i(1)$ and $\hat{w}_n^i(2)$, which correspond to positive and negative twists of framing of the $i$th strand of an $(n,n)$ identity tangle. 
\begin{definition} Define a framed linear tangle to be a framed affine tangle that isotopic to a product of the generators $\hat{g}_n^i, \hat{f}_n^i, \hat{t}_n^i(1), \hat{t}_n^i(2)$ for $i \neq n$, and $\hat{w}_n^i(1), \hat{w}_n^i(2)$. \end{definition}
\begin{definition} \label{AFTanRelations}Consider the category $\textbf{AFTan}$ (resp. $\textbf{FTan}$), with objects $k$ for $k \in \mathbb{Z}_{\geq 0}$, and the set of morphisms between $p$ and $q$ consist of all framed affine (resp. framed linear) $(p,q)$ tangles. \end{definition}
The relations for framed tangles are transformed as follows: 
\begin{enumerate} 
\item\label{AFTanMovesFirst} $\hat{f}_n^i\circ \hat{g}_n^{i+1} = id = \hat{f}_n^{i+1} \circ \hat{g}_n^i$ 
\item\label{AFTanReidemeister1} (Reidemeister 1) $\hat{f}_n^i\circ \hat{t}_n^{i\pm 1}(l)\circ \hat{g}_n^i = \hat{w}_n^i(l)$ 
\item\label{AFTanMovesThird} $\hat{t}_n^i(2)\circ \hat{t}_n^i(1) = id = \hat{t}_n^i(1)\circ \hat{t}_n^i(2)$ 
\item\label{AFTanReidemeister3} $\hat{t}_n^i(l)\circ \hat{t}_n^{i+1}(l)\circ \hat{t}_n^i(l) =  \hat{t}_n^{i+1}(l)\circ \hat{t}_n^{i}(l)\circ \hat{t}_n^{i+1}(l)$ 
\item $\hat{g}_{n+2}^{i+k}\circ \hat{g}_n^i = \hat{g}_{n+2}^i\circ \hat{g}_n^{i+k-2}$ 
\item $\hat{f}_{n}^{i+k-2}\circ \hat{f}_{n+2}^i = \hat{f}_{n}^i\circ \hat{f}_{n+2}^{i+k}$ 
\item $\hat{g}_n^{i+k-2}\circ \hat{f}_n^i = \hat{f}_{n+2}^i\circ \hat{g}_{n+2}^{i+k}, \quad \hat{g}_n^{i}\circ \hat{f}_n^{i+k-2} = \hat{f}_{n+2}^{i+k}\circ \hat{g}_{n+2}^{i}$
\item\label{AFTanPitchfork} $\hat{g}_n^i\circ \hat{t}_{n-2}^{i+k-2}(l) = \hat{t}_n^{i+k}(l)\circ \hat{g}_n^i, \quad \hat{g}_n^{i+k}\circ \hat{t}_{n-2}^{i}(l) = \hat{t}_n^{i}(l)\circ \hat{g}_n^{i+k}$ 
\item $\hat{f}_n^i\circ \hat{t}_{n}^{i+k}(l) = \hat{t}_{n-2}^{i+k-2}(l)\circ \hat{f}_n^i, \quad \hat{f}_n^{i+k}\circ \hat{t}_{n}^{i}(l) = \hat{t}_{n-2}^{i}(l)\circ \hat{f}_n^{i+k}$ 
\item $\hat{t}_n^i(l)\circ \hat{t}_n^{i+k}(m) =  \hat{t}_n^{i+k}(m)\circ \hat{t}_n^i(l)$ 
\item\label{TanMovesLast}  $\hat{t}_n^i(1)\circ \hat{g}_n^{i+1} = \hat{t}_n^{i+1}(2)\circ \hat{g}_n^i,\quad \hat{t}_n^i(2)\circ \hat{g}_n^{i+1}= \hat{t}_n^{i+1}(1)\circ \hat{g}_n^i$ 
\item $\hat{r}_n\circ \hat{r}_n'  = id = \hat{r}_n' \circ \hat{r}_n$ 
\item $\hat{r}_{n-2}'\circ \hat{f}_n^i \circ \hat{r}_n  = \hat{f}_n^{i+1},\ i=1,\ldots, n-2; \quad
 \hat{f}_n^{n-1} \circ (\hat{r}_n)^2  = \hat{f}_n^1$ 
\item $\hat{r}_n' \circ \hat{g}_n^i \circ \hat{r}_{n-2}  = \hat{g}_n^{i+1}, \ i=1,\ldots, n-2; \quad
 (\hat{r}_n')^{2} \circ \hat{g}_n^{n-1} = \hat{g}_n^1$ 
\item  $\hat{r}_n' \circ \hat{t}_n^i(l)\circ \hat{r}_n  = \hat{t}_n^{i+1}(l); \quad
 (\hat{r}_n')^{2}\circ \hat{t}_n^{n-1}(l) \circ (\hat{r}_n)^2  = \hat{t}_n^1(l)$ 

We have the following additional relations for twists:

\item\label{AFTanMovesFirstTwist} $\hat{w}_n^i(1) \circ \hat{w}_n^i(2) = id, \quad \hat{w}_n^i(l) \circ \hat{w}_n^j(k) = \hat{w}_n^j(k) \circ \hat{w}_n^i(l),\ i\ne j$ 
\item $\hat{w}_n^i(k) \circ \hat{g}_n^i = \hat{w}_n^{i+1}(k) \circ \hat{g}_n^i, \quad%
 \hat{w}_n^i(k) \circ \hat{g}_n^j =  \hat{g}_n^j \circ \hat{w}_n^{i+1\pm 1}(k),\ i\ne j, j+1$
\item $\hat{f}_n^i \circ \hat{w}_n^i(k) = \hat{f}_n^i \circ \hat{w}_n^{i+1}(k), \quad%
\hat{w}_n^i(k) \circ \hat{f}_n^j =  \hat{f}_n^j \circ \hat{w}_n^{i-1\pm 1}(k),\ i\ne j, j+1$
\item $\hat{w}_n^i(k) \circ \hat{t}_n^i = \hat{w}_n^{i+1}(k) \circ \hat{t}_n^i, \quad%
 \hat{w}_n^i(k) \circ \hat{t}_n^j =  \hat{t}_n^j \circ \hat{w}_n^{i}(k),\ i\ne j, j+1$ 
\item \label{FTanMovesLast} $\hat{t}_n^i \circ \hat{w}_n^i(k) = \hat{f}_n^i \circ \hat{w}_n^{i+1}(k), \quad%
\hat{w}_n^i(k) \circ \hat{f}_n^j =  \hat{t}_n^j \circ \hat{w}_n^{i}(k),\ i\ne j, j+1$
\item\label{AFTanMovesLast} $\hat{w}_n^i(k) \circ \hat{r}_n = \hat{r}_n \circ \hat{w}_n^{i-1}(k), \quad%
 \hat{w}_n^i(k) \circ \hat{r}'_n = \hat{r}'_n \circ \hat{w}_n^{i+1}(k)$ \end{enumerate} 

Note how the Reidemeister 1 move \eqref{AFTanReidemeister1} is the only relation between the non-twist generators that differs from the relations in $\textbf{ATan}$.

\begin{proposition} 
Any isotopy of affine framed tangles is equivalent to a composition of elementary isotopies \eqref{AFTanMovesFirst}-\eqref{AFTanMovesLast}.
\end{proposition}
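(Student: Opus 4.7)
The plan mirrors that of Proposition \ref{suff}, with an additional layer of bookkeeping for the framing data. The key idea is that forgetting the framing reduces the problem to the unframed case already handled, and the Reidemeister 1 discrepancy between the framed and unframed settings is exactly what the twist generators are designed to absorb.

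First, I would reduce from framed affine to framed linear tangles using the same strategy as Proposition \ref{suff}. A framed isotopy between two framed affine tangles can be written as a composition of two ``linear framed isotopies'' -- framed isotopies that each fix a segment of the form $[(\zeta,0),(2\zeta,0)]$. The wraparound behaviour is encoded by the framed rotation generators $\hat{r}_n, \hat{r}_n'$, and the framed analogues of the affine relations (12)-(15) involving wraparound generators $\hat{g}_n^n, \hat{f}_n^n, \hat{t}_n^n(l)$ follow from the linear relations together with the rotation relations and the twist-rotation relations \eqref{AFTanMovesLast}, by direct computation as in the proof of Lemma \ref{affrelns}.

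Second, for the resulting isotopy between framed linear tangles, I would forget the framing data to obtain an isotopy between unframed linear tangles. By Lemma 4.1 of \cite{ck}, this unframed isotopy decomposes as a sequence of elementary moves drawn from \eqref{ATanMovesFirst}-\eqref{ATanMovesLastLinear}. Each such move lifts directly to a framed relation in the list \eqref{AFTanMovesFirst}-\eqref{TanMovesLast}, with one crucial exception: the unframed Reidemeister 1 relation $f_n^i \circ t_n^{i\pm 1}(q) \circ g_n^i = \text{id}$ becomes the framed Reidemeister 1 relation \eqref{AFTanReidemeister1}, which produces a twist $\hat{w}_n^i(l)$ on the relevant strand rather than the identity.

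Third, I would track the twists accumulated throughout the sequence of lifted moves. Each application of \eqref{AFTanReidemeister1} deposits one twist generator somewhere in the diagram; using the twist-commutation relations \eqref{AFTanMovesFirstTwist}-\eqref{FTanMovesLast}, these twists can be slid along their strands and collected at convenient endpoints. Since we started and ended with the same framed tangle, the total net twist on each strand must vanish, and the collected twists then cancel pairwise via $\hat{w}_n^i(1) \circ \hat{w}_n^i(2) = \text{id}$ from \eqref{AFTanMovesFirstTwist}.

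The main obstacle I anticipate is verifying that the twist-commutation relations \eqref{AFTanMovesFirstTwist}-\eqref{FTanMovesLast} really are sufficient to transport any accumulated twist past any cup, cap, crossing, or rotation in any configuration, so that no twist becomes ``trapped'' next to a generator it cannot move across. This reduces to a finite diagrammatic case check -- essentially checking that twists behave centrally in the framed category modulo the listed relations -- and is routine but tedious. Once this transport property is established, the cancellation of the net framing discrepancy closes the argument.
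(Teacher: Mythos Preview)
Your proposal is correct and follows essentially the same approach as the paper: forget the framing, invoke the unframed result, and absorb the resulting framing discrepancy via the twist commutation relations \eqref{AFTanMovesFirstTwist}--\eqref{AFTanMovesLast}. The only organizational difference is that the paper applies the forgetful functor to affine tangles immediately and then cites Proposition~\ref{suff} wholesale (so the affine-to-linear reduction happens in the unframed setting), whereas you do the affine-to-linear reduction at the framed level first and then forget framing; the paper's ordering is slightly more economical since it avoids re-verifying the framed rotation relations, but the content is the same.
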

\begin{proof} There is a forgetful functor from the $2$-category of framed tangles and their isotopies to the 
$2$-category of non-framed tangles and their isotopies, which forgets the framing. 
Thus, for every isotopy there is a composition of relations \eqref{ATanMovesFirst}-\eqref{ATanMovesLast} (in $\textbf{ATan}$) which differs only in framing, and that can be ruled out by the commutation laws \eqref{AFTanMovesFirstTwist}-\eqref{AFTanMovesLast} (in $\textbf{AFTan}$) of twists with all other generators. \end{proof}

Lemma \ref{affrelns} still holds in this context, after replacing $s_n^n$ by it's ``framed" version $\hat{s}_n^n$.

\section{Functors associated to affine tangles}\label{section-functors}


\begin{definition} Recall that $\textbf{AFTan}$ (resp $\textbf{Tan}$, $\textbf{FTan}$) has objects $\{ k \}$ for $k \in \mathbb{Z}_{\geq 0}$, and the set of morphisms between $\{ p \}$ and $\{ q \}$ consists of all framed affine (resp. framed linear) $(p, q)$ tangles. Define the category $\textbf{AFTan}_m$ (resp. $\textbf{Tan}_m, \textbf{FTan}_m$) to be the full subcategory of $\textbf{AFTan}$ (resp. $\textbf{Tan}, \textbf{FTan}$) with objects $\{ m+2k \}$ for $k \in \mathbb{Z}_{\geq 0}$. \end{definition} \begin{definition} A ``weak representation" of the category $\textbf{AFTan}_m$ is an assignment of a triangulated category $\mathcal{C}_k$ for each $k \in \mathbb{Z}_{\geq 0}$, and a functor $\Psi(\alpha): \mathcal{C}_p \rightarrow \mathcal{C}_q$ for each framed affine $(m+2p, m+2q)$-tangle, so that the relations between tangles hold for these functors: i.e. if $\beta$ is an $(m+2q, m+2r)$ tangle, then there is an isomorphism $\Psi(\beta) \circ \Psi(\alpha) \simeq \Psi(\beta \circ \alpha)$. \end{definition}

Similarly one can define the notion of a ``weak representation" of the categories $\textbf{Tan}_m, \textbf{FTan}_m$.
The goal of this section is to construct a weak representation of $\textbf{AFTan}_m$ using the categories $\mathcal{D}_k$. 

In \cite{ck} Cautis and Kamnitzer construct a weak representation of the category of oriented tangles. We are going to adapt their construction to our setting of framed tangles, and then generalize it to the category $\textbf{AFTan}_m$ of affine framed tangles. The relations between the generators for oriented tangles are mostly the same as the relations we use here, with a notable exception of Reidemeister I move.

\subsection{Cautis and Kamnitzer's representation of the oriented tangle calculus} Let $\widetilde{\mathcal{D}}_n=D^b(\text{Coh}(Y_{m+2n}))$. In section $4$ of \cite{ck}, Cautis and Kamnitzer construct a weak representation of the category $\textbf{OTan}_m$ of oriented tangles using the categories $\widetilde{\mathcal{D}}_n$.  In fact, Cautis and Kamnitzer construct a weak representation of the full category $\textbf{OTan}$ (which gives a weak representation of the subcategory $\textbf{OTan}_m$).
Also, Cautis and Kamnitzer deal with the $\mathbb{C}^*$-equivariant derived categories; but we will omit this $\mathbb{C}^*$-equivariance as we do not need it. In this subsection we are going to recall their construction, altered so that it becomes a weak representation of $\textbf{FTan}$.

Recall the definition of Fourier-Mukai transforms (see \cite{huybr} for an extended treatment). Here all pullbacks, pushforwards, Homs and tensor products of sheaves will denote the corresponding derived functors. 

\begin{definition} (\cite{huybr}) Let $X, Y$ be two complex algebraic varieties, and let $\pi_1: X \times Y \rightarrow X, \pi_2: X \times Y \rightarrow Y$ denote the two projections. For an object  $\mathcal{T} \in D^b(\text{Coh}(X \times Y))$, define the Fourier-Mukai transform $\Psi_{\mathcal{T}}: D^b(\text{Coh}(X)) \rightarrow D^b(\text{Coh}(Y))$ by $\Psi_{\mathcal{T}}(\mathcal{F}) =  \pi_{2*}(\pi_{1}^* \mathcal{F} \otimes \mathcal{T})$. The object $\mathcal{T}$ is then called the Fourier-Mukai kernel of $\Psi_{\mathcal{T}}$. \end{definition}

 Let $\widetilde{\mathcal{V}}_k$ denote the tautological vector bundle on $Y_{m+2n}$ corresponding to $V_k$, and let $\widetilde{\mathcal{E}}_k$ be  the quotient line bundle $\widetilde{\mathcal{E}}_k = \widetilde{\mathcal{V}}_k/\widetilde{\mathcal{V}}_{k-1}$. 
The following two definitions are based on \cite{ck}, but not identical to the definitions there:

\begin{definition}  Define the following Fourier-Mukai kernels: \begin{align*} \widetilde{\mathcal{G}}_{m+2n}^i &= \mathcal{O}_{X_{m+2n}^i} \otimes \pi_2^* \widetilde{\mathcal{E}}_i \in D^b(\text{Coh}(Y_{m+2n-2} \times Y_{m+2n})), \\ \widetilde{\mathcal{F}}_{m+2n}^i &= \mathcal{O}_{X_{m+2n}^i} \otimes \pi_1^* \widetilde{\mathcal{E}}_{i+1}^{-1} \in D^b(\text{Coh}(Y_{m+2n} \times Y_{m+2n-2})) \\ \widetilde{\mathcal{T}}_{m+2n}^i(1) &= \mathcal{O}_{Z_{m+2n}^i} \in D^b(\text{Coh}(Y_{m+2n} \times Y_{m+2n})) \\ \widetilde{\mathcal{T}}_{m+2n}^i(2) &= \mathcal{O}_{Z_{m+2n}^i} \otimes \pi_1^* {\widetilde{\mathcal{E}}_{i+1}^{-1}} \otimes \pi_2^*{\widetilde{\mathcal{E}}_i} \in D^b(\text{Coh}(Y_{m+2n} \times Y_{m+2n})) \end{align*} 
\end{definition}

\begin{definition} Define the functors 
\begin{align*}\widetilde{G}_{m+2n}^i & = \widetilde{\Psi}(g_{m+2n}^{i}) = \Psi_{\widetilde{\mathcal{G}}_{m+2n}^i}  : \widetilde{\mathcal{D}}_{n-1} \rightarrow \widetilde{\mathcal{D}}_n\\
\widetilde{F}_{m+2n}^i & = \widetilde{\Psi}(f_{m+2n}^{i}) = \Psi_{\widetilde{\mathcal{F}}_{m+2n}^i}  : \widetilde{\mathcal{D}}_{n} \rightarrow \widetilde{\mathcal{D}}_{n-1} \\ \widetilde{T}_{m+2n}^i(1) & =  \widetilde{\Psi}(t_{m+2n}^{i}(1)) = \Psi_{\widetilde{\mathcal{T}}_{m+2n}^i(1)}  : \widetilde{\mathcal{D}}_{n} \rightarrow \widetilde{\mathcal{D}}_{n}\\ \widetilde{T}_{m+2n}^i(2) & = \widetilde{\Psi}(t_{m+2n}^{i}(2)) = \Psi_{\widetilde{\mathcal{T}}_{m+2n}^i(2)}  : \widetilde{\mathcal{D}}_{n} \rightarrow \widetilde{\mathcal{D}}_{n} \end{align*} \begin{align*}
\widetilde{W}_{m+2n}^i(1) & = \widetilde{\Psi}(w_{m+2n}^{i}(1)) = [-1]  : \widetilde{\mathcal{D}}_{n} \rightarrow \widetilde{\mathcal{D}}_{n} \\
\widetilde{W}_{m+2n}^i(2) & = \widetilde{\Psi}(w_{m+2n}^{i}(2)) = [1]  : \widetilde{\mathcal{D}}_{n} \rightarrow \widetilde{\mathcal{D}}_{n} 
\end{align*} \end{definition}

Note that the difference with the definition in \cite{ck} is that we only use two kinds of twists $\widetilde{T}(1)$ and $\widetilde{T}(2)$ where they use four, and our twists differ from their twists by a shift. The reasons for this change are, first, that there are only two different crossing generators in the category $\textbf{FTan}$ while there are four in 
$\textbf{OTan}$; second, this is the change that turns the oriented tangle relations into the framed tangle relations (see Proposition \ref{tangle} below); and third, it gives us the skein relation in a nice form of an exact triangle
$Id \to \widetilde{\Psi}(t_n^i(2))\to \widetilde{\Psi}(g_n^i\circ f_n^i)$ in the spirit of Khovanov's homology construction as described in \cite{khovanov} (see Lemma \ref{adjoint} below).

The functors $\widetilde{G}_{m+2n}^i: \widetilde{\mathcal{D}}_{n-1} \rightarrow \widetilde{\mathcal{D}}_{n}$ admit the following alternate description: $\widetilde{G}_{m+2n}^i(\mathcal{F})=j_*(p^* \mathcal{F} \otimes \widetilde{\mathcal{E}}_i)$ for $\mathcal{F} \in \widetilde{\mathcal{D}}_{n-1}$. Similarly, the functor $\widetilde{F}_{m+2n}^i: \widetilde{\mathcal{D}}_{n} \rightarrow \widetilde{\mathcal{D}}_{n-1}$ admits the following description: $\widetilde{F}_{m+2n}^i(\mathcal{G})=p_*(j^* \mathcal{G} \otimes \widetilde{\mathcal{E}_{i+1}^{-1}})$ for $\mathcal{G} \in \widetilde{\mathcal{D}}_n$. The following calculation of the left and right adjoints to $\widetilde{G}_{m+2n}^i$, and an alternative description of the functors $\widetilde{T}_{m+2n}^i(1), \widetilde{T}_{m+2n}^i(2)$, from \cite{ck} will be of use to us. 

\begin{lemma} \label{adjoint} We have $(\widetilde{G}_{m+2n}^i)^R = \widetilde{F}_{m+2n}^i[-1]$ and $(\widetilde{G}_{m+2n}^i)^L = \widetilde{F}_{m+2n}^i[1]$. Also, for $\mathcal{F} \in \mathcal{D}_n$, there are distinguished triangles $\widetilde{G}_{m+2n}^i (\widetilde{G}_{m+2n}^i)^R \mathcal{F} \rightarrow \mathcal{F} \rightarrow \widetilde{T}_{m+2n}^i(2) \mathcal{F}$ and $\widetilde{T}_{m+2n}^i(1) \mathcal{F} \rightarrow \mathcal{F} \rightarrow \widetilde{G}_{m+2n}^i (\widetilde{G}_{m+2n}^i)^L$. \end{lemma}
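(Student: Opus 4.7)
The proof splits cleanly into two tasks: computing the left and right adjoints of $\widetilde{G}^i_{m+2n}$, and then establishing the two distinguished triangles. I would first unpack the factorization $\widetilde{G}^i_{m+2n} = j_* \circ (-\otimes\widetilde{\mathcal{E}}_i) \circ p^*$, where $p \colon X_{m+2n}^i \to Y_{m+2n-2}$ is the smooth $\mathbb{P}^1$-bundle and $j \colon X_{m+2n}^i \hookrightarrow Y_{m+2n}$ is the inclusion of the smooth codimension-one subvariety. Taking right adjoints piece by piece yields $(\widetilde{G}^i_{m+2n})^R = p_*(-\otimes\widetilde{\mathcal{E}}_i^{-1}) \circ j^!$, and taking left adjoints yields $(\widetilde{G}^i_{m+2n})^L = (p^*)^L \circ (-\otimes \widetilde{\mathcal{E}}_i^{-1}) \circ j^*$.

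The next step is to apply the two standard duality formulas: for a smooth divisor embedding, $j^!\mathcal{G} \simeq j^*\mathcal{G}\otimes N_{X^i/Y}[-1]$; and for a smooth projective morphism of relative dimension one, $(p^*)^L \simeq p_*(-\otimes\omega_p)[1]$. The core geometric input is then the identification
\[
N_{X_{m+2n}^i/Y_{m+2n}} \;\simeq\; \omega_p \;\simeq\; \bigl(\widetilde{\mathcal{E}}_i \otimes \widetilde{\mathcal{E}}_{i+1}^{-1}\bigr)\big|_{X_{m+2n}^i}.
\]
On a fiber $\mathbb{P}(z^{-1}L_{i-1}/L_{i-1})$ of $p$, the bundle $\widetilde{\mathcal{E}}_i = L_i/L_{i-1}$ restricts to $\mathcal{O}(-1)$ and $\widetilde{\mathcal{E}}_{i+1} = z^{-1}L_{i-1}/L_i$ restricts to $\mathcal{O}(1)$, so both $\omega_p$ and $\widetilde{\mathcal{E}}_i\otimes\widetilde{\mathcal{E}}_{i+1}^{-1}$ restrict to $\mathcal{O}(-2)$; I would upgrade this to a global isomorphism via the relative Euler sequence of $p$, and handle the normal bundle analogously via a tangent-space computation in the spirit of Lemma \ref{lemma-transverse}. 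Substituting these identifications into the adjoint formulas collapses them to $\widetilde{F}^i[-1]$ and $\widetilde{F}^i[1]$ respectively.

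For the distinguished triangles, the maps $\widetilde{G}^i(\widetilde{G}^i)^R\mathcal{F} \to \mathcal{F}$ and $\mathcal{F} \to \widetilde{G}^i(\widetilde{G}^i)^L\mathcal{F}$ are just the counit and unit of the respective adjunctions, so the work lies in identifying the cones with $\widetilde{T}^i(2)\mathcal{F}$ and $\widetilde{T}^i(1)\mathcal{F}[1]$. I would perform this identification at the level of Fourier-Mukai kernels on $Y_{m+2n}\times Y_{m+2n}$: the composition kernel $\pi_{13*}(\pi_{12}^*\widetilde{\mathcal{F}}^i[-1]\otimes\pi_{23}^*\widetilde{\mathcal{G}}^i)$ is supported on the pushforward of $X^i\times_{Y_{m+2n-2}} X^i$, which maps birationally onto $Z^i_{m+2n}$; the counit map corresponds to restriction to the diagonal $\Delta \subset Z^i_{m+2n}$, and computing the cone fiberwise along the $\mathbb{P}^1$-fiber of $Z^i_{m+2n} \to Y_{m+2n-2}$ using cohomology of line bundles on $\mathbb{P}^1$ yields precisely the twisted $\mathcal{O}_{Z^i_{m+2n}}$ appearing in the definition of $\widetilde{\mathcal{T}}^i_{m+2n}(2)$.

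The main obstacle is the careful bookkeeping of the line-bundle twists and cohomological shifts when performing the pushforward along the $\mathbb{P}^1$-fiber — in particular, the single-shift distinction between $\widetilde{T}^i(1)$ and $\widetilde{T}^i(2)$ in our framed conventions differs from the four-crossing conventions of \cite{ck}, and one must verify that the shift is absorbed into the line bundle $\pi_1^*\widetilde{\mathcal{E}}_{i+1}^{-1}\otimes\pi_2^*\widetilde{\mathcal{E}}_i$ in the right way. The geometric heart of the calculation — that $\widetilde{G}^i_{m+2n}$ is a spherical functor whose twist and cotwist are described by the Hecke correspondence $Z^i_{m+2n}$ — is essentially established in Section 5 of \cite{ck}, and the argument here is a translation of theirs into our framed conventions.
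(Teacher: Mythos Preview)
Your proposal is correct and follows the same route as the paper, which simply cites Lemma~4.4 and Theorem~4.6 of \cite{ck}; your sketch is essentially an unpacking of that argument (factorize $\widetilde{G}^i$ through the divisor inclusion and the $\mathbb{P}^1$-bundle, use the normal-bundle and relative-dualizing identifications $N_{X^i/Y}\simeq\omega_p\simeq\widetilde{\mathcal{E}}_i\otimes\widetilde{\mathcal{E}}_{i+1}^{-1}$, then compute the adjunction cones on the kernel side via the correspondence $Z^i_{m+2n}$). Your closing remark about tracking the shift conventions is apt: the labelling of $\widetilde{T}^i(1)$ and $\widetilde{T}^i(2)$ here is swapped relative to the later spherical-twist convention in Definition~\ref{def-spherical-functors}, so the triangle identification really does require matching the specific Fourier--Mukai kernels rather than invoking the general spherical formalism.
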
 \begin{proof} This follows from Lemma $4.4$, and Theorem $4.6$ in \cite{ck}. \end{proof}

Recall that any framed linear tangle can be expressed as a composition of the above generators, and that any relation between linear tangles can be expressed via the relations \eqref{AFTanMovesFirst}-\eqref{TanMovesLast}, \eqref{AFTanMovesFirstTwist}-\eqref{FTanMovesLast} in Definition \ref{AFTanRelations}. Hence defining functors $\Psi(\alpha)$ for each $(m+2p, m+2q)$-tangle $\alpha$, which are compatible under composition, is equivalent to defining functors for each of the generators, satisfying the relations \eqref{AFTanMovesFirst}-\eqref{TanMovesLast}, \eqref{AFTanMovesFirstTwist}-\eqref{FTanMovesLast} (up to isomorphism). 

\begin{proposition} \label{tangle} The functors $\widetilde{\Psi}(f_{m+2n}^i), \widetilde{\Psi}(g_{m+2n}^i), \widetilde{\Psi}(t_{m+2n}^i(l)),  \widetilde{\Psi}(w_{m+2n}^{i}(l))$ satisfy the relations \eqref{AFTanMovesFirst}-\eqref{TanMovesLast}, \eqref{AFTanMovesFirstTwist}-\eqref{FTanMovesLast}. Thus, given a linear $(m+2p, m+2q)$ tangle, $\alpha$, written as a product of generators, we can define $\widetilde{\Psi}(\alpha)$ by composition (and up to isomorphism, the result does not depend on the choice of decomposition as a product of generators). This gives a weak representation of $\textbf{FTan}_m$ using the categories $\widetilde{\mathcal{D}}_{n}$. \end{proposition}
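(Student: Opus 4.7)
The plan is to transport the verification of Cautis--Kamnitzer \cite{ck} for the oriented tangle category across to the framed setting, tracking only the places where our definitions differ from theirs. The three changes are: we have only two crossing generators rather than four; our crossing kernels $\widetilde{\mathcal{T}}^i(l)$ differ from those in \cite{ck} by a cohomological shift; and we have added the framing twist generators, represented by the pure shift functors $\widetilde{W}^i(l)=[\mp 1]$.

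For the non-twist relations other than Reidemeister 1 (items \eqref{AFTanMovesFirst}, \eqref{AFTanMovesThird}--\eqref{TanMovesLast}), I would match each framed generator with its oriented counterpart in \cite{ck}. Since the Fourier--Mukai kernels agree up to a cohomological shift that appears symmetrically on both sides of each relation, the isomorphisms verified in Section 6 of \cite{ck} carry over without modification. The twist commutation relations \eqref{AFTanMovesFirstTwist}--\eqref{FTanMovesLast} are then immediate, because every Fourier--Mukai functor commutes strictly with cohomological shifts and each $\widetilde{W}^i(l)$ is by definition such a shift.

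The essential new content is the framed Reidemeister 1 identity $\widetilde{F}^i \circ \widetilde{T}^{i\pm 1}(l) \circ \widetilde{G}^i \simeq \widetilde{W}^i(l)$. Starting from the distinguished triangle $\widetilde{G}^{i\pm 1}(\widetilde{G}^{i\pm 1})^R \to \mathrm{Id} \to \widetilde{T}^{i\pm 1}(2)$ of Lemma \ref{adjoint}, I would apply $\widetilde{F}^i$ on the left and $\widetilde{G}^i$ on the right, and use the already-verified Reidemeister 0 relations $\widetilde{F}^i \widetilde{G}^{i\pm 1}\simeq \mathrm{Id}\simeq \widetilde{F}^{i\pm 1}\widetilde{G}^i$, together with the adjunction formula $(\widetilde{G}^{i\pm 1})^R = \widetilde{F}^{i\pm 1}[-1]$, to reduce the left term of the resulting triangle to a shift of the identity. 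The middle term $\widetilde{F}^i \widetilde{G}^i$ is then analyzed by the standard $\mathbb{P}^1$-bundle computation for $X_{m+2n}^i\to Y_{m+2n-2}$, and isolating the appropriate summand completes the triangle and identifies $\widetilde{F}^i \widetilde{T}^{i\pm 1}(2)\widetilde{G}^i$ with $\widetilde{W}^i(2)=[1]$. The $l=1$ case follows dually from the second triangle in Lemma \ref{adjoint}.

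The main obstacle is the careful bookkeeping of line-bundle twists through this calculation. The kernels of $\widetilde{\mathcal{G}}, \widetilde{\mathcal{F}}, \widetilde{\mathcal{T}}$ carry nontrivial factors of $\widetilde{\mathcal{E}}_i$ and $\widetilde{\mathcal{E}}_{i+1}^{-1}$, and it is essential to verify that in the Reidemeister 1 composition these twists cancel to leave a pure cohomological shift, rather than a shift of a nontrivial line bundle. Establishing this cancellation is precisely what dictates our modified definition of $\widetilde{\mathcal{T}}^i(l)$ relative to the convention in \cite{ck}; I expect it to reduce to the same fibrewise $\mathbb{P}^1$ computation performed there, with the shift arithmetic readjusted to match $\widetilde{W}^i(l)$.
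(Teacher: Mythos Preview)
Your treatment of the non-Reidemeister-1 relations and of the twist commutation relations is correct and matches the paper's argument: both simply observe that the relations \eqref{AFTanMovesFirst}, \eqref{AFTanMovesThird}--\eqref{TanMovesLast} have the same number of each crossing type on both sides, so the global shift cancels, and that \eqref{AFTanMovesFirstTwist}--\eqref{FTanMovesLast} are trivial because shifts commute with everything.

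Where you diverge is Reidemeister 1. You propose to reprove it from scratch via the distinguished triangle for $\widetilde{T}^{i\pm 1}(2)$, the Reidemeister 0 identities, and a $\mathbb{P}^1$-bundle analysis of $\widetilde{F}^i\widetilde{G}^i$. This can be made to work, but it is more labour than needed and the step ``isolating the appropriate summand'' hides a genuine check: you must identify the map $\mathrm{Id}[-1]\to \widetilde{F}^i\widetilde{G}^i\simeq \mathrm{Id}[-1]\oplus\mathrm{Id}[1]$ coming from the adjunction counit, and argue its first component is an isomorphism. The paper avoids all of this by noting that Cautis--Kamnitzer's \emph{oriented} Reidemeister 1,
\[
\widetilde{F}^i \circ \widetilde{T}^{i\pm 1}(l)[\pm 1] \circ \widetilde{G}^i \simeq \mathrm{Id},
\]
already established in \cite{ck}, is \emph{literally} the framed relation once the shift is moved to the other side: $\widetilde{F}^i \circ \widetilde{T}^{i\pm 1}(l) \circ \widetilde{G}^i \simeq [\mp 1]=\widetilde{W}^i(l)$. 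So your ``main obstacle'' of tracking the line-bundle twists evaporates---that bookkeeping was already absorbed into the proof in \cite{ck}, and the only new content is a one-line shift rearrangement.
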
 

\begin{proof} 
By Theorem $4.2$ in \cite{ck}, the functors $\widetilde{G}_{m+2n}^i$, $\widetilde{F}_{m+2n}^i$, $\widetilde{T}_{m+2n}^i(1)[1]$, and $\widetilde{T}_{m+2n}^i(2)[-1]$ satisfy the relations in the category $\textbf{OTan}$ that differ slightly from the relations \eqref{AFTanMovesFirst}-\eqref{TanMovesLast}. The relations \eqref{AFTanMovesFirst}, \eqref{AFTanMovesThird}-\eqref{TanMovesLast} are identical for $\textbf{OTan}$ and $\textbf{FTan}$, and they hold for the functors $\widetilde{G}_{m+2n}^i$, $\widetilde{F}_{m+2n}^i$, $\widetilde{T}_{m+2n}^i(1)$, $\widetilde{T}_{m+2n}^i(2)$ as well since every relation has the same number of each type of crossings on both sides, so after shifting every type $1$ crossing by $[1]$ and every type $2$ crossing by $[-1]$ the relations still hold. The oriented Reidemeister move I relation
$$\widetilde{F}_{m+2n}^i\circ \widetilde{T}_{m+2n}^{i\pm1}(1)[1]\circ \widetilde{G}_{m+2n}^i\simeq \mathrm{Id}\simeq \widetilde{F}_{m+2n}^i\circ \widetilde{T}_{m+2n}^{i\pm1}(2)[-1]\circ \widetilde{G}_{m+2n}^i$$
 is exactly the relation \eqref{AFTanReidemeister1} for $\widetilde{G}_{m+2n}^i$, $\widetilde{F}_{m+2n}^i$, $\widetilde{T}_{m+2n}^i(1)$, $\widetilde{T}_{m+2n}^i(2)$, and $\widetilde{W}_{m+2n}^i(l)$:
\begin{align*}
\widetilde{F}_{m+2n}^i\circ \widetilde{T}_{m+2n}^{i\pm1}(1)\circ \widetilde{G}_{m+2n}^i & \simeq  [-1]=\widetilde{W}_{m+2n}^i(1)\\
\widetilde{F}_{m+2n}^i\circ \widetilde{T}_{m+2n}^{i\pm1}(2)\circ \widetilde{G}_{m+2n}^i & \simeq  [1] = \widetilde{W}_{m+2n}^i(2)
\end{align*}
The relations  \eqref{AFTanMovesFirstTwist}-\eqref{FTanMovesLast} are straightforward.
\end{proof}


\subsection{Constructing functors $\Psi(\alpha): \mathcal{D}_p \rightarrow \mathcal{D}_q$ indexed by linear tangles: cups and caps}

In the previous section we constructed a weak representation of the category $\textbf{FTan}$ of framed tangles using the triangulated categories $\widetilde{\mathcal{D}}_n=D^b(\text{Coh}(Y_{m+2n}))$. Our next goal is to construct a weak representation of the category $\textbf{AFTan}$ of affine framed tangles using the categories $\mathcal{D}_n=D^b(\text{Coh}_{\mathcal{B}_{z_n}}(U_n))$. The embedding $i_n: U_n \rightarrow Y_{m+2n}$ induces a functor $i_{n *}: \mathcal{D}_n \rightarrow \widetilde{\mathcal{D}}_{n}$ for each $n$, thus one may hope to ``lift" the functor $\widetilde{\Psi}(\alpha): \widetilde{\mathcal{D}}_{p} \rightarrow \widetilde{\mathcal{D}}_{q}$ to a functor $\Psi(\alpha): \mathcal{D}_p \rightarrow \mathcal{D}_q$. In more precise terms, we aim to construct a functor $\Psi(\alpha)$ such that $i_{q *} \circ \Psi(\alpha) = \widetilde{\Psi}(\alpha) \circ i_{p *}$. 
Note that this isomorphism together with the isomorphism $\widetilde{\Psi}(\beta \circ \alpha) \simeq \widetilde{\Psi}(\beta) \circ \widetilde{\Psi}(\alpha)$ does not yet imply the isomorphism 
 $\Psi(\beta \circ \alpha) \simeq \Psi(\beta) \circ \Psi(\alpha)$, so we will need to prove the latter separately along with our construction of $\Psi(\alpha)$, employing an argument similar to one in \cite{ck}.

 Let $\mathcal{V}_k$ denote the tautological vector bundle on $S_n \times_{\mathfrak{sl}_{m+2n}} T^*\mathcal{B}_{n}$ corresponding to $V_k$, and let $\mathcal{E}_k$ be  the quotient line bundle $\mathcal{E}_k = \mathcal{V}_k/\mathcal{V}_{k-1}$. 

\begin{definition}\label{definition-kernels-G-F}
 Define the following Fourier-Mukai kernels:
\begin{align*} \mathcal{G}_{m+2n}^i &= \mathcal{O}_{X_{n,i}} \otimes \pi_2^* \mathcal{E}_i \in D^b(\text{Coh}(U_{n-1} \times U_n)), \\ \mathcal{F}_{m+2n}^i &= \mathcal{O}_{X_{n,i}} \otimes \pi_1^* \mathcal{E}_{i+1}^{-1} \in D^b(\text{Coh}(U_{n} \times U_{n-1})) \end{align*} 
\end{definition}

\begin{definition} Define the functors:
\begin{align*} G_{m+2n}^i = \Psi(g_{m+2n}^i) = \Psi_{\mathcal{G}_{m+2n}^i}: \mathcal{D}_{n-1} \rightarrow \mathcal{D}_n \\ F_{m+2n}^i = \Psi(f_{m+2n}^i) = \Psi_{\mathcal{F}_{m+2n}^i}: \mathcal{D}_{n} \rightarrow \mathcal{D}_{n-1} \end{align*} \end{definition}

\begin{remark} A priori, the functor $G_{m+2n}^i$ maps $D^b(\text{Coh}(U_{n-1}))$ to $D^b(\text{Coh}(U_n))$. However, it is easy to see that $G_{m+2n}^i$ maps the subcategory $\mathcal{D}_{n-1} = D^b(\text{Coh}_{\mathcal{B}_{z_{n-1}}}(U_{n-1})) \subset D^b(\text{Coh}(U_{n-1}))$  to the subcategory $\mathcal{D}_{n} = D^b(\text{Coh}_{\mathcal{B}_{z_n}}(U_n)) \subset D^b(\text{Coh}(U_{n-1}))$; similarly $F_{m+2n}^i$ maps $\mathcal{D}_n$ to $\mathcal{D}_{n-1}$. \end{remark}

The functors $G_{m+2n}^i : \mathcal{D}_{n-1} \rightarrow \mathcal{D}_n$ admit the following alternate description: $G_{m+2n}^i(\mathcal{F}) = j_{n,i*}( \pi_{n,i}^*\mathcal{F} \otimes \mathcal{E}_k)$ for $\mathcal{F} \in \mathcal{D}_{n-1}$. Similarly, the functor $F_{m+2n}^i : \mathcal{D}_{n} \rightarrow \mathcal{D}_{n-1}$ can be expressed as follows: $F_{m+2n}^i(\mathcal{G}) = \pi_{n,i*}(j_{n,i}^*\mathcal{G} \otimes \mathcal{E}_{k+1}^{-1})$ for $\mathcal{G} \in \mathcal{D}_n$. We will define the functors $\Psi(t_{m+2n}^i (1))$ and $\Psi(t_{m+2n}^i(2))$ in the next section, by proving an analogue of Lemma \ref{adjoint} above.


\subsection{Constructing functors $\Psi(\alpha): \mathcal{D}_p \rightarrow \mathcal{D}_q$ indexed by linear tangles: crossings and the framing}

Recall the definitions of spherical twists and spherical functors from \cite{ALDGSpherical}:
\begin{definition} Suppose we have two triangulated categories $\mathcal{C}$ and $\mathcal{D}$, and a functor $S: \mathcal{C} \rightarrow \mathcal{D}$, with a left adjoint $L: \mathcal{D} \rightarrow \mathcal{C}$ and a right adjoint $R: \mathcal{D} \rightarrow \mathcal{C}$. Assume that the categories $\mathcal{C}$ and $\mathcal{D}$ admit DG-enhancements, and the functors $S$, $R$, and $L$ descend from DG-functors between those (this holds for Fourier-Mukai transforms between derived categories of coherent sheaves, see \cite{ALDGSpherical} Example 4.3). Then the four adjunction maps for $(L, S, R)$ have canonical cones, and we can define these cones to be the twist $T_S(1)$, the dual twist $T_S(2)$, the cotwist $F_S(1)$, and the dual co-twist $F_S(2)$:
\begin{align*}
SR\to\mathrm{id}\to T_S(1); \qquad & T_S(2)\to\mathrm{id}\to SL;\\
F_S(1)\to\mathrm{id}\to RS; \qquad & LS \to \mathrm{id}\to F_S(2).
\end{align*}
\end{definition}

\begin{definition}\label{def-spherical-functors}
The functor $S$ is called spherical if the following four conditions hold:
\begin{enumerate}
\item\label{sphdefcond1} $T_S(1)$ and $T_S(2)$ are quasi-inverse autoequivalences of $\mathcal{D}$;
\item\label{sphdefcond2} $F_S(1)$ and $F_S(2)$ are quasi-inverse autoequivalences of $\mathcal{C}$;
\item\label{sphdefcond3} The composition $LT_S(1)[-1]\to LSR\to R$ of canonical maps is an isomorphism of functors;
\item\label{sphdefcond4} The composition $R\to RSL\to F_S(1)L[1]$ of canonical maps is an isomorphism of functors.
\end{enumerate}
\end{definition}

\begin{theorem}(\cite{ALDGSpherical}) Any two conditions in Definition \ref{def-spherical-functors} imply all four.
\end{theorem}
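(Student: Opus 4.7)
The plan is to reduce the theorem to a small collection of pairwise implications between the four conditions, and then to exploit the symmetry among them. Note first that conditions \eqref{sphdefcond1} and \eqref{sphdefcond2} are formally dual (interchanging the roles of $S$ with its adjoints and $\mathcal{C}$ with $\mathcal{D}$), and similarly \eqref{sphdefcond3} and \eqref{sphdefcond4} are dual. Thus it suffices to establish, say, the implications \eqref{sphdefcond1}+\eqref{sphdefcond2}$\Rightarrow$\eqref{sphdefcond3}, \eqref{sphdefcond3}+\eqref{sphdefcond4}$\Rightarrow$\eqref{sphdefcond1}, \eqref{sphdefcond1}+\eqref{sphdefcond3}$\Rightarrow$\eqref{sphdefcond4}, and \eqref{sphdefcond2}+\eqref{sphdefcond3}$\Rightarrow$\eqref{sphdefcond1}; the rest follow by duality.

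The core technical input is that, because everything descends from DG-functors, the cones $T_S(i)$ and $F_S(i)$ are functorial, and the composites appearing in conditions \eqref{sphdefcond3} and \eqref{sphdefcond4} fit into commutative diagrams obtained by combining the unit/counit triangles for the three adjunctions $(L,S)$, $(S,R)$. First I would set up the ``fundamental octahedron'': apply $L({-})R$ to the defining triangle $SR\to\mathrm{id}\to T_S(1)$ and rotate it against the adjunction triangle $F_S(1)\to\mathrm{id}\to RS$ applied on the other side. Chasing this octahedron produces a canonical morphism $LT_S(1)[-1]\to R$ which coincides with the composite in \eqref{sphdefcond3}, and whose fibre identifies (up to shift) with $F_S(1)L$. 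This is the key identity, and once it is in place most implications become routine.

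With this octahedron available, the strategy for each pairwise implication is: (a) if \eqref{sphdefcond1} and \eqref{sphdefcond2} hold, then invertibility of $T_S(1)$ lets one rewrite $LT_S(1)$ as a bona fide functor whose comparison with $R$ can be verified by applying $S$ from the left, using that $ST_S(1)\simeq T_S(2)^{-1}S$ follows from \eqref{sphdefcond1}; then \eqref{sphdefcond3} drops out of the octahedron, and \eqref{sphdefcond4} from its dual. (b) If \eqref{sphdefcond3} and \eqref{sphdefcond4} hold, one uses them to express $T_S(1)\circ T_S(2)$ as $\mathrm{id}$ by substituting the isomorphism of \eqref{sphdefcond3} back into the defining triangles for $T_S(1), T_S(2)$ and collapsing; the analogous manipulation on the $\mathcal{C}$-side gives \eqref{sphdefcond2}. (c) Mixed cases, e.g. \eqref{sphdefcond2}+\eqref{sphdefcond3}$\Rightarrow$\eqref{sphdefcond1}, are handled by first deducing \eqref{sphdefcond4} from \eqref{sphdefcond3} via the octahedral identity (which gives $\mathrm{fib}(LT_S(1)[-1]\to R)\simeq F_S(1)L$, hence the two conditions are visibly equivalent) and then reducing to case (b).

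The hard part is establishing the octahedral identity rigorously. At the level of triangulated categories the composites of unit/counit morphisms are defined only up to non-canonical choices of cones, and the octahedron is not automatically coherent. This is precisely where the DG-enhancements are essential: one must lift $S$, $L$, $R$ and all adjunction morphisms to genuine DG-bimodules and verify the identity at the level of DG-homotopy, using the functorial cone construction in the DG-setting. Once this coherence is proven (which is essentially the content of Section 4 of Anno--Logvinenko), all four implications above become formal diagram chases.
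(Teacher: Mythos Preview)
The paper does not prove this statement at all: it is stated as a theorem with an explicit citation to \cite{ALDGSpherical} and no proof is given in the present paper. So there is no ``paper's own proof'' to compare against; the authors simply quote the result from Anno--Logvinenko.

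Your sketch is a reasonable outline of the argument in \cite{ALDGSpherical}, and you are right that the essential content is the construction of a coherent octahedron from the adjunction triangles, which only makes sense at the DG level. You correctly identify where the difficulty lies (functoriality of cones) and you point to the right reference for the details. As a sketch this is fine, but since the present paper treats the theorem as a black box, a full independent proof here would amount to reproducing a substantial chunk of \cite{ALDGSpherical}; your proposal does not do that, and the final paragraph effectively defers the hard step back to the cited source. That is consistent with how the paper handles it, but you should be aware that what you have written is an outline of someone else's proof rather than a new argument.
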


The usual way to prove that a functor is spherical is to use condition (\ref{sphdefcond2}) and one of the conditions (\ref{sphdefcond3}) and (\ref{sphdefcond4}). We are going to focus on functors for which a stronger version of (\ref{sphdefcond2}) holds:

\begin{definition} A spherical functor $S: \mathcal{C} \rightarrow \mathcal{D}$ is called strongly spherical if  $F_{S}(1)=[-3]$. \end{definition}

It turns out that if we use strongly spherical functors and their adjoints and twists to construct weak representations of $\textbf{FTan}_{m}$, the only relations we need to check are the Reidemeister 0 move and the commutation relations between non-adjacent cups and caps; all relations involving crossings follow automatically.

\begin{theorem} \label{spherical} Suppose we have a triangulated category $\mathcal{C}_{m+2k}$ for each $k \in \mathbb{Z}_{\geq 0}$; and for each $k \geq 1$, $1 \leq i < m+2k$, a strongly spherical functor $S_{m+2k}^i: \mathcal{C}_{m+2k-2} \rightarrow \mathcal{C}_{m+2k}$. Let $L_{m+2k}^i$ be it's left adjoint; $R_{m+2k}^i$ be it's right adjoint; $T_{m+2k}^i(1)$ its twist, and $T_{m+2k}^i (2)$ its dual twist. If the following conditions hold: 
\begin{enumerate} 
\item $S_{m+2k}^i L_{m+2k}^{i \pm 1} [-1] \simeq \mathrm{id}$ 
\item $S_{m+2k+2}^{i+l} S_{m+2k}^i \simeq S_{m+2k+2}^i S_{m+2k}^{i+l-2}$ for $l \geq 2$
\item $S_{m+2k}^{i+l-2} \circ L_{m+2k}^{i} \simeq L_{m+2k+2}^{i} \circ S_{m+2k+2}^{i+l}$, $S_{m+2k}^{i} \circ L_{m+2k}^{i+l-2} \simeq L_{m+2k+2}^{i+l} \circ S_{m+2k+2}^i$ for $l \geq 2$. 
\end{enumerate} 
then assign: 
\begin{itemize} \item $\Psi(g_{m+2k}^i) \simeq S_{m+2k}^i$, $\Psi(f_{m+2k}^i) = L_{m+2k}^i [-1] \simeq R_{m+2k}^i[1]$ \item $\Psi(t_{m+2k}^i(1)) = T_{m+2k}^i(1), \Psi(t_{m+2k}^i(2)) = T_{m+2k}^i(2)$ \item $\Psi(w_{m+2k}^i(1)) = [-1], \Psi(w_{m+2k}^i(-1)) = [1]$ 
\end{itemize} 
These functors will give a weak representation of $\textbf{FTan}_{m}$.  
\end{theorem}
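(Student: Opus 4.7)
The plan is to verify each of the defining relations \eqref{AFTanMovesFirst}-\eqref{TanMovesLast} and \eqref{AFTanMovesFirstTwist}-\eqref{FTanMovesLast} of $\textbf{FTan}_m$ for the prescribed assignment on generators, by systematically exploiting the spherical functor formalism. A preliminary observation: the strongly spherical hypothesis $F_{S_{m+2k}^i}(1) \simeq [-3]$ together with condition (\ref{sphdefcond4}) of Definition \ref{def-spherical-functors} yields $R_{m+2k}^i \simeq F_S(1) L_{m+2k}^i[1] \simeq L_{m+2k}^i[-2]$, so $L_{m+2k}^i[-1] \simeq R_{m+2k}^i[1]$. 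This makes $\Psi(f_{m+2k}^i)$ well defined and simultaneously serves as both a shifted left and a shifted right adjoint of $\Psi(g_{m+2k}^i)$, which is essential for all that follows.

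The relations split into three strata. The first consists of relations that are either built into the spherical formalism or immediate from the hypotheses. Reidemeister 2 (relation \eqref{AFTanMovesThird}) is the definition of $T_S(1)$ and $T_S(2)$ as quasi-inverses. Reidemeister 0 (relation \eqref{AFTanMovesFirst}) is condition (1), possibly after passing to adjoints. The cup-cup and cup-cap commutations (relations 5 and 7) are exactly conditions (2) and (3), and the cap-cap relation (relation 6) is obtained by taking adjoints of condition (2). The framing-twist relations \eqref{AFTanMovesFirstTwist}-\eqref{FTanMovesLast} are essentially tautological because $\Psi(w_n^i(l)) = [\pm 1]$ is a shift functor, which commutes naturally with every other functor appearing.

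The second stratum consists of relations in which crossings commute with cups, caps, or far crossings. Since the twists $T_{m+2k}^i(l)$ are constructed functorially as cones of units/counits of adjunctions involving $S_{m+2k}^i$, the cup-crossing and cap-crossing isotopies \eqref{AFTanPitchfork} follow from the cup-cup and cup-cap commutations by functoriality of cones. The framed Reidemeister 1 move \eqref{AFTanReidemeister1} is then obtained by applying the triangle $T_S(2) \to \mathrm{id} \to SL$ (or its dual for $T_S(1)$) sandwiched between a cup on the right and a cap on the left, then using Reidemeister 0 to kill the $FSL$-type term and isolate the shift. The pitchfork move \eqref{TanMovesLast} $T^i(1) \circ S^{i+1} \simeq T^{i+1}(2) \circ S^i$ follows from the two defining triangles of the twists and condition (1) which provides the needed cancellation on one term.

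The main obstacle is Reidemeister 3 (relation \eqref{AFTanReidemeister3}), the braid relation $T^i T^{i+1} T^i \simeq T^{i+1} T^i T^{i+1}$ for the twists associated to adjacent spherical functors. This cannot be proved by formal adjoint manipulations alone and requires the braid relation theorem for pairs of spherical functors, whose hypotheses are precisely the cup-cup commutation \eqref{AFTanPitchfork}, cup-cap commutation relating $S_{m+2k}^i$ and $S_{m+2k}^{i+1}$ from conditions (2) and (3), together with the Reidemeister 0 condition (1). The proof produces an explicit isomorphism between the two triple compositions by building the relevant iterated cones and identifying them via the given commutation data; equivalently one may invoke abstract braid-for-spherical results in the spirit of Anno--Logvinenko. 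The verification of this relation concentrates the bulk of the technical work of the theorem.
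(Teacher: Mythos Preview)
Your proposal is correct and follows essentially the same route as the paper: the easy relations are read off from the hypotheses and adjunction, the isotopies involving crossings are propagated from the cup/cap isotopies via the functorial cones defining the twists, and Reidemeister~III is outsourced to the braid relation for spherical functors (the paper cites \cite{ALDGSpherical}, Theorem~1.2, which is exactly what you gesture at).

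One point of imprecision worth tightening: in your Reidemeister~I sketch, the $FSL$-type term is not ``killed'' --- after applying condition~(1) it becomes the identity, and to finish you must compare two triangles with the same middle object $L_iS_i$ and invoke the octahedral axiom. The key non-formal input is that the composite $L_iS_{i\pm 1}R_{i\pm 1}S_i \to L_iS_i \to \mathrm{id}$ is the adjunction counit for the equivalence $L_iS_{i\pm 1}$, hence an isomorphism; this is what makes octahedral apply. Similarly, your pitchfork argument needs not just ``cancellation on one term'' but a commuting square between the two defining triangles, with the vertical maps induced by the unit/counit isomorphisms coming from condition~(1). The paper spells out both of these diagrams explicitly.
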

\begin{proof} 
 Let us check that the relations \eqref{AFTanMovesFirst}-\eqref{TanMovesLast}, \eqref{AFTanMovesFirstTwist}-\eqref{FTanMovesLast} from Definition \ref{AFTanRelations} hold for the above choice of functors.

The Reidemeister move 0, cup-cup isotopy and cup-cap isotopy relations hold by the assumptions of the theorem,
and the cap-cap isotopy relation follows immediately from the cup-cup isotopy relation and the fact
that caps are adjoint to cups up to a shift. The cap-crossing isotopy, cup-crossing isotopy and crossing-crossing isotopy relations
follow then from the above relations and the definition of a twist. The Reidemeister move II relation 
$T_{m+2k}^i(1)T_{m+2k}^i(2)\simeq id \simeq T_{m+2k}^i(2)T_{m+2k}^i(1)$ 
follows from the fact that $S_{m+2k}^i$ are spherical functors, hence $T_{m+2k}^i(l)$ are equivalences of categories. 
The commutation relations with twists \eqref{AFTanMovesFirstTwist}-\eqref{FTanMovesLast}
hold because all exact functors commute with shifts.

The remaining less trivial relations are Reidemeister move I \eqref{AFTanReidemeister1}, 
Reidemeister move III \eqref{AFTanReidemeister3} and the pitchfork move \eqref{AFTanPitchfork}.
For simplicity of notation assume that $k=3$ and denote $\Upsilon_{m+6}^i$ by $\Upsilon_i$,
where $\Upsilon$ stands for $L$, $R$, $T(1)$ or $T(2)$.

{\bf Reidemeister move I:} $L_2T_1(1)S_2[-1] \simeq [1]$. 
We have an exact triangle
\begin{equation*}
L_2S_1R_1S_2 \to L_2S_2\to L_2T_1(1)S_2
\end{equation*}
by the definition of $T_1(1)$ and another exact triangle
\begin{equation*}
\mathrm{id}[2]\to L_2S_2\to \mathrm{id}
\end{equation*}
since $S_2$ is a strong spherical functor.
Note that the composition of maps $L_2S_1R_1S_2\to L_2S_2\to \mathrm{id}$ from these two exact triangles is in fact the adjunction counit for the pair of $L_2S_1$ and its right adjoint $R_1S_2$.
By the assumptions of the theorem, $L_2S_1$ is an equivalence, so this composition is an isomorphism.
Therefore by the octahedral axiom we have $L_2T_1(1)S_2\simeq \mathrm{id}[2]$, qed.

{\bf Pitchfork move:} $T_1(1)S_2\simeq T_2(2)S_1$.
Consider the following diagram:
\begin{equation*}
\xymatrix{
S_1R_1S_2\ar[r]\ar[d] & S_2 \ar[r]\ar[d] & T_1(1)S_2 \\
S_1[-1] \ar[r] & S_2L_2S_1[-1]\ar[r] & T_2(2)S_1
}
\end{equation*}
where the rows are exact triangles and the two vertical morphisms are induced by the isomorphisms $R_1S_2[1]\simeq \mathrm{id}$ and its dual $\mathrm{id}\simeq L_2S_1[-1]$. The diagram commutes (again because the adjunction maps for $(L_2S_1, R_1S_2)$ are compositions of adjunction maps for $(L_1,S_1,R_1)$ and $(L_2,S_2,R_2)$), therefore there is an isomorphism $T_1(1)S_2\simeq T_2(2)S_1$, qed.

{\bf Reidemeister move III:} $T_1(1)T_2(1)T_1(1) \simeq T_2(1)T_1(1)T_2(1)$. This follows from \cite{ALDGSpherical}, Theorem 1.2, since $L_iS_i$ are equivalences of categories, so the maps $L_iS_jR_jS_i\to id$ have zero cones.
\end{proof}


\subsection{Checking the tangle relations}

To apply  Theorem \ref{spherical} with $\mathcal{C}_{m+2k} = \mathcal{D}_k$, and $S_{m+2k}^i = G_{m+2k}^i$, we will need to prove that $G_{m+2n}^i: \mathcal{D}_{n-1} \rightarrow \mathcal{D}_n$ are strongly spherical functors, and check the three relations from Theorem \ref{spherical}. 

Recall that we have the inclusion of the divisor $ X_{n, i} \rightarrow U_n$, as well as the $\mathbb{P}^1$-bundle $X_{n,i} \rightarrow U_{n-1}$. 
Denote these maps by $j_{n,i}$ and $\pi_{n,i}$ respectively.
By abuse of notation we will denote $j_{n,i}^*(\mathcal{E}_k)$ simply by $\mathcal{E}_k$. The tautological sheaves $\mathcal{V}_k$ exist on
$X_{n,i}$ as well as on $U_n$, and so do their quotients.

\begin{lemma}\label{lemma-OU-of-X}
The following sheaves are isomorphic:
\begin{enumerate}
\item $\mathcal{O}_{U_n}(X_{n,i}) \simeq \mathcal{E}_{i+1}^{-1} \otimes \mathcal{E}_i$;
\item $\omega_{X_{n,i}/U_n} \simeq \mathcal{E}_{i+1}^{-1} \otimes \mathcal{E}_i\simeq \omega_{X_{n,i}/U_{n-1}}$.
\end{enumerate}
\end{lemma}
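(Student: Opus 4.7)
The plan is to produce part (1) from a canonical section of $\mathcal{E}_{i+1}^{-1} \otimes \mathcal{E}_i$ whose vanishing scheme is $X_{n,i}$, and then derive both isomorphisms in (2) by adjunction and by the $\mathbb{P}^1$-bundle description of $X_{n,i}\to U_{n-1}$.

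For part (1), the tautological endomorphism $x$ on $U_n$ satisfies $x\mathcal{V}_{i+1}\subset\mathcal{V}_i$ and $x\mathcal{V}_i\subset\mathcal{V}_{i-1}$, so it induces a morphism of line bundles $\bar{x}:\mathcal{E}_{i+1}\to\mathcal{E}_i$, equivalently a global section $s\in H^0(U_n,\mathcal{E}_{i+1}^{-1}\otimes\mathcal{E}_i)$. Locally trivializing $\mathcal{E}_i$ and $\mathcal{E}_{i+1}$, the section $s$ is given by a single regular function whose vanishing is exactly the condition $xV_{i+1}\subset V_{i-1}$. Comparing with Definition \ref{definition-Un-Xni}, the zero scheme of $s$ is precisely $X_{n,i}$. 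Since $U_n$ is smooth (Lemma \ref{lemma-transverse}) and $X_{n,i}$ is a nonempty proper closed subscheme of pure codimension one (as noted after Definition \ref{definition-Un-Xni}), $s$ is not a zero divisor and exhibits an isomorphism $\mathcal{O}_{U_n}(X_{n,i})\simeq\mathcal{E}_{i+1}^{-1}\otimes\mathcal{E}_i$.

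For part (2), the first isomorphism is immediate from part (1) and the standard adjunction formula for a smooth Cartier divisor, $\omega_{X_{n,i}/U_n}\simeq j_{n,i}^*\mathcal{O}_{U_n}(X_{n,i})$. For the second, recall from the discussion after Proposition \ref{prop-isomorphism-Un-partial_Un+1} that $\pi_{n,i}:X_{n,i}\to U_{n-1}$ is the $\mathbb{P}^1$-bundle $\mathbb{P}(\mathcal{V}_{i+1}/\mathcal{V}_{i-1})$, where the rank-two bundle $\mathcal{V}_{i+1}/\mathcal{V}_{i-1}$ descends to $U_{n-1}$ via Proposition \ref{prop-isomorphism-Un-partial_Un+1}. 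On $X_{n,i}$ the tautological filtration gives an exact sequence
\[
 0\to \mathcal{E}_i \to \pi_{n,i}^*(\mathcal{V}_{i+1}/\mathcal{V}_{i-1}) \to \mathcal{E}_{i+1}\to 0
\]
in which $\mathcal{E}_i$ plays the role of the tautological subbundle. Applying the standard formula $\omega_{\mathbb{P}(\mathcal{W})/S}\simeq \mathcal{S}\otimes\mathcal{Q}^{-1}$ (with $\mathcal{S}$, $\mathcal{Q}$ the tautological sub and quotient) yields $\omega_{X_{n,i}/U_{n-1}}\simeq \mathcal{E}_i\otimes\mathcal{E}_{i+1}^{-1}=\mathcal{E}_{i+1}^{-1}\otimes\mathcal{E}_i$, completing the proof.

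No step is a serious obstacle; the only point to treat carefully is verifying that the zero locus of $\bar{x}$ agrees with $X_{n,i}$ \emph{as a scheme}, but this is transparent once one recognizes that the defining equations of $X_{n,i}$ in Definition \ref{definition-Un-Xni} are locally cut out by the single function representing $\bar{x}$ in any trivialization.
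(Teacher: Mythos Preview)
Your proof is correct and follows the same approach as the paper. The paper defers part (1) to Lemma~4.3(i) of \cite{ck}, whose argument is precisely the section construction you spell out (the induced map $\bar{x}:\mathcal{E}_{i+1}\to\mathcal{E}_i$ vanishing exactly along $X_{n,i}$); for part (2) both you and the paper invoke the adjunction formula for the smooth divisor and the relative canonical bundle of a $\mathbb{P}^1$-bundle.
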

\begin{proof}
The proof of the first part is identical to the proof of Lemma 4.3 (i) in \cite{ck}. Note that we have proven that $X_{n,i}$ and $X_{n,i+1}$ intersect
transversally inside $U_n$ in Lemma \ref{lemma-transverse} here. The first isomorphism in the second part, as in part (ii) of the same Lemma in \cite{ck}, 
follows immediately from the first part and the fact that $X_{n,i}$ is a smooth divisor in $U_n$, so 
 $\omega_{X_{n,i}/U_n}=j_{n,i}^!\mathcal{O}_{U_n}[1]\simeq j_{n,i}^*\mathcal{O}_{U_n}(X_{n,i})$.
The second isomorphism follows from the canonical isomorphism $\omega_{\mathbb{P}(V)}\cong \mathcal{E}\otimes (\mathcal{V}/\mathcal{E})^{-1}$,
where $V$ is a two-dimensional space, $\mathcal{E}$ is the tautological line bundle on $\mathbb{P}(V)$, and $\mathcal{V}$ is a constant vector bundle with
fiber $V$.
\end{proof}

\begin{lemma}\label{lemma-F-G-shifted-adjoints}
We have $(G_{m+2n}^i)^R \simeq F_{m+2n}^i [-1]$ and $(G_{m+2n}^i)^L \simeq F_{m+2n}^i [1]$. 
\end{lemma}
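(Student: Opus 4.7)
The plan is to compute both adjoints of $G_{m+2n}^i$ by decomposing it as the three-step composition $j_{n,i*}\circ(-\otimes \mathcal{E}_i)\circ \pi_{n,i}^*$ and taking the adjoints of each factor in reverse order. Since $j_{n,i}:X_{n,i}\hookrightarrow U_n$ is the closed embedding of a smooth divisor and $\pi_{n,i}:X_{n,i}\to U_{n-1}$ is a $\mathbb{P}^1$-bundle (hence smooth and proper of relative dimension $1$), the relevant adjoints exist and are given by standard formulas from Grothendieck-Serre duality. The key geometric input will be Lemma \ref{lemma-OU-of-X}, which identifies both relative dualizing bundles $\omega_{X_{n,i}/U_n}$ and $\omega_{X_{n,i}/U_{n-1}}$ with $\mathcal{E}_{i+1}^{-1}\otimes \mathcal{E}_i$; this is precisely what converts a twist by $\mathcal{E}_i$ on one side into a twist by $\mathcal{E}_{i+1}^{-1}$ on the other.

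For the right adjoint, the right adjoints of $\pi_{n,i}^*$, of $(-\otimes\mathcal{E}_i)$, and of $j_{n,i*}$ are, respectively, $\pi_{n,i*}$, tensoring with $\mathcal{E}_i^{-1}$, and $j_{n,i}^!(-)=j_{n,i}^*(-)\otimes\omega_{X_{n,i}/U_n}[-1]$ (the last formula being exactly the one used implicitly in the proof of Lemma \ref{lemma-OU-of-X}). Composing them gives
\begin{align*}
(G_{m+2n}^i)^R(\mathcal{G}) &\simeq \pi_{n,i*}\bigl(j_{n,i}^*\mathcal{G}\otimes \omega_{X_{n,i}/U_n}\otimes \mathcal{E}_i^{-1}\bigr)[-1],
\end{align*}
and substituting $\omega_{X_{n,i}/U_n}\simeq \mathcal{E}_{i+1}^{-1}\otimes \mathcal{E}_i$ from Lemma \ref{lemma-OU-of-X}(ii) collapses the line-bundle factor to $\mathcal{E}_{i+1}^{-1}$, yielding $F_{m+2n}^i(\mathcal{G})[-1]$.

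For the left adjoint, the left adjoint of $j_{n,i*}$ is simply $j_{n,i}^*$ (derived pullback along a closed embedding of smooth varieties), and by relative Serre duality for $\pi_{n,i}$ the left adjoint of $\pi_{n,i}^*$ is $\pi_{n,i*}(-\otimes \omega_{X_{n,i}/U_{n-1}})[1]$. Composition yields
\begin{align*}
(G_{m+2n}^i)^L(\mathcal{G}) &\simeq \pi_{n,i*}\bigl(j_{n,i}^*\mathcal{G}\otimes \mathcal{E}_i^{-1}\otimes \omega_{X_{n,i}/U_{n-1}}\bigr)[1],
\end{align*}
and Lemma \ref{lemma-OU-of-X}(ii) again reduces the twist to $\mathcal{E}_{i+1}^{-1}$, producing $F_{m+2n}^i(\mathcal{G})[1]$.

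The main obstacle is really just bookkeeping: one must track carefully how each adjoint introduces twists and shifts, and apply the two parts of Lemma \ref{lemma-OU-of-X} at the appropriate moments. The symmetry between the resulting shifts $[-1]$ and $[+1]$ reflects the non-trivial coincidence that the two relative dualizing bundles, computed independently for the divisor embedding into $U_n$ and for the $\mathbb{P}^1$-fibration over $U_{n-1}$, are isomorphic. One should also remark that the resulting adjoints preserve the support condition defining $\mathcal{D}_n$, which follows from properness of $\pi_{n,i}$ and the fact that the divisor $X_{n,i}\cap \pi_n^{-1}(\mathcal{B}_{z_n})$ is carried by $\pi_{n,i}$ into $\mathcal{B}_{z_{n-1}}\subset U_{n-1}$.
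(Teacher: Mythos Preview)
Your proof is correct and is essentially the same argument the paper has in mind: the paper's proof simply says ``this follows from a direct computation of the Fourier-Mukai kernels, using the second part of Lemma~\ref{lemma-OU-of-X},'' and refers to the analogous computation in \cite{ck}, Lemma~4.4. Your functorial factorization $G_{m+2n}^i=j_{n,i*}\circ(-\otimes\mathcal{E}_i)\circ\pi_{n,i}^*$ together with the standard Grothendieck--Serre adjunction formulas is exactly that direct computation spelled out, and the appeal to Lemma~\ref{lemma-OU-of-X}(ii) at both steps matches the paper's indication.
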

\begin{proof}
As in the proof of Lemma 4.4 in \cite{ck}, this follows from a direct computation of the Fourier-Mukai kernels,
using the second part of Lemma \ref{lemma-OU-of-X} here.
\end{proof}

\begin{lemma}\label{lemma-FG-splitting} 
We have $F_{m+2n}^i \circ G_{m+2n}^i \simeq \mathrm{id}[-1] \oplus \mathrm{id}[1]$. 
\end{lemma}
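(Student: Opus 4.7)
The plan is to compute $F_{m+2n}^i \circ G_{m+2n}^i(\mathcal{F})$ directly using the alternate descriptions
\[ G_{m+2n}^i(\mathcal{F}) = j_{n,i*}(\pi_{n,i}^*\mathcal{F} \otimes \mathcal{E}_i), \qquad F_{m+2n}^i(\mathcal{G}) = \pi_{n,i*}(j_{n,i}^*\mathcal{G} \otimes \mathcal{E}_{i+1}^{-1}), \]
given just before this lemma. Writing $j = j_{n,i}$ and $\pi = \pi_{n,i}$, this reduces the problem to computing
\[ F_{m+2n}^i \circ G_{m+2n}^i(\mathcal{F}) \simeq R\pi_*\bigl( j^* j_* (\pi^*\mathcal{F} \otimes \mathcal{E}_i) \otimes \mathcal{E}_{i+1}^{-1}\bigr). \]

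The key step is analyzing $j^* j_* \mathcal{K}$ for the smooth divisor embedding $j: X_{n,i} \hookrightarrow U_n$. Using the Koszul resolution $[\mathcal{O}_{U_n}(-X_{n,i}) \to \mathcal{O}_{U_n}]$ of $j_* \mathcal{O}_{X_{n,i}}$ and pulling back along $j$, the restriction of the defining section to $X_{n,i}$ is zero, so the resulting two-term complex has zero differential. Thus
\[ j^* j_* \mathcal{K} \simeq \mathcal{K} \oplus \bigl(\mathcal{K} \otimes N_{X_{n,i}/U_n}^\vee\bigr)[1], \]
and by Lemma \ref{lemma-OU-of-X}(1) the conormal bundle is $N^\vee = \mathcal{E}_i^{-1} \otimes \mathcal{E}_{i+1}$. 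Applying this with $\mathcal{K} = \pi^*\mathcal{F} \otimes \mathcal{E}_i$ and then tensoring with $\mathcal{E}_{i+1}^{-1}$ gives
\[ j^* j_* (\pi^*\mathcal{F} \otimes \mathcal{E}_i) \otimes \mathcal{E}_{i+1}^{-1} \simeq \bigl(\pi^*\mathcal{F} \otimes \mathcal{E}_i \otimes \mathcal{E}_{i+1}^{-1}\bigr) \oplus \bigl(\pi^*\mathcal{F}\bigr)[1]. \]

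Finally, push forward along the $\mathbb{P}^1$-bundle $\pi: X_{n,i} \to U_{n-1}$ and apply the projection formula. The second summand yields $\mathcal{F} \otimes R\pi_* \mathcal{O}_{X_{n,i}}[1] \simeq \mathcal{F}[1]$, since $R\pi_*\mathcal{O} = \mathcal{O}$ for a $\mathbb{P}^1$-bundle. For the first summand, Lemma \ref{lemma-OU-of-X}(2) identifies $\mathcal{E}_i \otimes \mathcal{E}_{i+1}^{-1}$ with the relative dualizing sheaf $\omega_{X_{n,i}/U_{n-1}}$, so relative Serre duality gives $R\pi_*(\mathcal{E}_i \otimes \mathcal{E}_{i+1}^{-1}) \simeq \mathcal{O}_{U_{n-1}}[-1]$; by the projection formula this summand is $\mathcal{F}[-1]$. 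Combining yields $F_{m+2n}^i \circ G_{m+2n}^i(\mathcal{F}) \simeq \mathcal{F}[-1] \oplus \mathcal{F}[1]$, as desired.

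The only delicate point is justifying the splitting $j^*j_*\mathcal{K} \simeq \mathcal{K} \oplus \mathcal{K} \otimes N^\vee[1]$ rather than merely the existence of a distinguished triangle; this is the one place where the argument could in principle go wrong, but as observed above the connecting map is induced by the section cutting out the divisor, which vanishes on the divisor. Everything else is bookkeeping with line bundles, the projection formula, and relative duality for the $\mathbb{P}^1$-bundle $\pi_{n,i}$.
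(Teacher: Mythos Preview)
Your argument is correct and is essentially the same computation as in the paper. The only cosmetic difference is that the paper routes through the right adjoint $(G_{m+2n}^i)^R \simeq \pi_{n,i*}(\mathcal{E}_i^{-1}\otimes j_{n,i}^!(-))$ and the splitting of $j^! j_*$, whereas you work directly with $F_{m+2n}^i$ and the splitting of $j^* j_*$; since $j^!$ and $j^*$ differ by the normal line bundle and a shift, the two calculations are line-by-line equivalent.
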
 
\begin{proof} 
This is again a direct computation. The functor $G_{m+2n}^i$ can be expressed as $(j_{n,i})_*(\mathcal{E}_i\otimes \pi_{n,i}^*(-))$. Then for its
right adjoint $(G_{m+2n}^i)^R$, which by  Lemma \ref{lemma-F-G-shifted-adjoints} is isomorphic to $ F_{m+2n}^i [-1]$, we have
$(G_{m+2n}^i)^R\simeq (\pi_{n,i})_*(\mathcal{E}_i^{-1}\otimes j_{n,i}^!(-))$. Since $j_{n,i}:X_{n,i}\to U_n$ is an embedding of a smooth divisor, we have 
$j_{n,i}^!(j_{n,i})_*\simeq \mathrm{id}\oplus (-)\otimes \mathcal{O}_{X_{n,i}}(X_{n,i})[-1] \simeq \mathrm{id}\oplus (-)\otimes \mathcal{E}_i\otimes \mathcal{E}_{i+1}^{-1}[-1]$.
Then
\begin{multline*}
F_{m+2n}^i \circ G_{m+2n}^i \simeq
 (\pi_{n,i})_*(\mathcal{E}_i^{-1}\otimes j_{n,i}^!((j_{n,i})_*(\mathcal{E}_i\otimes \pi_{n,i}^*(-))))[1] \simeq \\
\simeq
(\pi_{n,i})_* \pi_{n,i}^*(-)[1] \oplus
(\pi_{n,i})_* (\mathcal{E}_i\otimes\mathcal{E}_{i+1}^{-1}\otimes \pi_{n,i}^*(-))
\simeq
\mathrm{id}[1] \oplus \mathrm{id}[-1]
\end{multline*}
since $\pi_{n,i}$ is a Fano fibration, so $(\pi_{n,i})_* \pi_{n,i}^*\simeq \mathrm{id}\simeq (\pi_{n,i})_* \pi_{n,i}^!$, and 
$\mathcal{E}_i\otimes\mathcal{E}_{i+1}^{-1}\simeq \omega_{X_{n,i}/U_{n-1}}$
while $X_{n,i}$ has dimension $1$ over $U_{n-1}$, so 
$(\pi_{n,i})_* (\mathcal{E}_i\otimes\mathcal{E}_{i+1}^{-1}\otimes\pi_{n,i}^*)\simeq (\pi_{n,i})_* (\pi_{n,i}^!)[-1]\simeq\mathrm{id}[-1]$.
\end{proof}

Now we can show that the functors $G_{m+2n}^i$ satisfy the conditions of Theorem \ref{spherical}.

\begin{proposition}\label{prop-G-spherical}
The functors $G_{m+2n}^i: \mathcal{D}_{n-1} \rightarrow \mathcal{D}_n$ are spherical.
\end{proposition}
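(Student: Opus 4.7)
The plan is to verify sphericality of $G = G_{m+2n}^i$ by establishing two of the four conditions of Definition \ref{def-spherical-functors}; by the cited theorem of Anno--Logvinenko, this suffices. Along the way the argument will show that $G$ is in fact strongly spherical, which is what we will need in the subsequent application of Theorem \ref{spherical}.

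By Lemma \ref{lemma-F-G-shifted-adjoints}, the adjoints of $G$ are $R = F_{m+2n}^i[-1]$ and $L = F_{m+2n}^i[1]$, and Lemma \ref{lemma-FG-splitting} then yields the direct sum decompositions $RG \simeq \mathrm{id} \oplus \mathrm{id}[-2]$ and $LG \simeq \mathrm{id}[2] \oplus \mathrm{id}$. The first step is to verify that the adjunction unit $\eta:\mathrm{id}\to RG$ is precisely the inclusion of the $\mathrm{id}$ summand (and dually the counit $LG\to\mathrm{id}$ is the projection onto $\mathrm{id}$). The cleanest route is to trace through the Fourier--Mukai kernel computation underlying Lemma \ref{lemma-FG-splitting}: unwinding the proof shows that the $\mathrm{id}$ summand of $RG$ is the one produced by the adjunction morphism $\mathrm{id}\to \pi_{n,i*}\pi_{n,i}^*$ coming from the Fano $\mathbb{P}^1$-bundle $\pi_{n,i}$, and this is exactly the summand hit by $\eta$. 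Taking cones in the triangle $F_G(1)\to\mathrm{id}\to RG$ then gives $F_G(1)\simeq\mathrm{id}[-3]$, and the dual computation yields $F_G(2)\simeq\mathrm{id}[3]$. These are mutually quasi-inverse autoequivalences, so condition (\ref{sphdefcond2}) holds and $G$ is strongly spherical.

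To complete the verification, I would check condition (\ref{sphdefcond4}): the canonical composition $R\to RGL\to F_G(1)L[1]$ must be an isomorphism. Under the identifications above the target is $L[-2]$, which is canonically isomorphic to $R$ via the two expressions for the adjoints, so the goal becomes to show that the composition in question agrees (up to nonzero scalar) with this canonical equivalence. This can be extracted from an octahedral axiom applied to the defining triangles for $F_G(1)$ and $T_G(1)$, combined with the zig-zag identity for the adjunction $(L,G,R)$; alternatively it is another direct kernel-level check. The main obstacle is the first step, namely the explicit identification of the adjunction unit with a specific factor of the splitting of $RG$; once this is pinned down, the remaining manipulations are formal and closely parallel the argument for $\widetilde{G}_{m+2n}^i$ in Theorem 4.6 of \cite{ck}, the key geometric input being that $X_{n,i}\hookrightarrow U_n$ is a smooth divisor and $X_{n,i}\to U_{n-1}$ is a Fano $\mathbb{P}^1$-bundle, both established earlier in Section \ref{section-varieties}.
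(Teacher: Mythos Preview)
Your approach is correct in outline but takes a genuinely different route from the paper. The paper does not verify conditions (\ref{sphdefcond2}) and (\ref{sphdefcond4}) directly. Instead it invokes a black-box criterion, Theorem~4.2 of \cite{ALFibrations}, which says that a Fourier--Mukai functor with kernel $\mathcal{O}_X$ for $X \subset U_{n-1} \times U_n$ a $\mathbb{P}^1$-fibration over $U_{n-1}$ is spherical provided two fibrewise conditions hold: (i) $H^i(\Lambda^j \mathcal{N}|_{l_p}) = 0$ unless $i=j=0$ or $i=j=1$, and (ii) $(\omega_{X/U_n})|_{l_p} \simeq \omega_{l_p}$, for every fibre $l_p \simeq \mathbb{P}^1$. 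Both reduce to the observation $\mathcal{N}|_{l_p} \simeq \omega_{X_{n,i}/U_n}|_{l_p} \simeq \mathcal{O}(-2)$, which follows from Lemma~\ref{lemma-OU-of-X}. Strong sphericality is then deduced separately in the subsequent Corollary, by essentially the argument you give for condition~(\ref{sphdefcond2}): the unit $\mathrm{id}\to RG \simeq \mathrm{id}\oplus\mathrm{id}[-2]$ must land in the $\mathrm{id}$ summand since $\mathrm{Hom}(\mathcal{O}_\Delta,\mathcal{O}_\Delta[-2])=0$, and is nonzero by the zig-zag identity.

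Your strategy effectively merges the Proposition and its Corollary, proving strong sphericality in one pass. This is economical, and your treatment of condition~(\ref{sphdefcond2}) is fine. The weaker part is your handling of condition~(\ref{sphdefcond4}): the claim that the canonical composite $R\to RGL\to F_G(1)L[1]$ is an isomorphism does not drop out of the octahedral axiom and zig-zag alone in any obvious way, and ``direct kernel-level check'' is doing real work that you have not spelled out. The paper's use of \cite{ALFibrations} sidesteps exactly this computation by packaging it into the geometric hypotheses on the normal bundle, which is what that criterion buys.
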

\begin{proof}
By definition, the functor $G_{m+2n}^i$ differs from the Fourier-Mukai functor with kernel $\mathcal{O}_{X_{n,i}}\in D^b(\mathrm{Coh}(U_{n-1}\times U_n))$
by tensoring with a line bundle, so the two functors are spherical simultaneously. By \cite{ALFibrations}, Theorem 4.2 the latter functor is spherical if 
for any $p\in U_{n-1}$ two conditions hold: first, $H^i(\Lambda^j\mathcal{N}|_{l_p})=0$ unless $i=j=0$ or $i=j=1$; second,
$(\omega_{X_{n,i}/U_n})|_{l_p}\simeq \omega_{l_p}$. Here $l_p\simeq \mathbb{P}^1$ is the fiber over $p$, and $\mathcal{N}$ is the normal bundle
of $X_{n,i}$ in $U_n$. Since $X_{n,i}\subset U_n$ is a divisor, we have $\mathcal{N}\simeq \mathcal{O}_{U_n}(X_{n,i})$, which by Lemma
\ref{lemma-OU-of-X} is isomorphic to $\mathcal{E}_{i+1}^{-1} \otimes \mathcal{E}_i$, and since $\mathcal{E}_i|_{l_p}\simeq \mathcal{O}(-1)$ and 
$\mathcal{E}_{i+1}|_{l_p}\simeq \mathcal{O}(1)$, we have $\mathcal{N}|_{l_p}\simeq \mathcal{O}(-2)$ and the first condition holds.
Then, by Lemma \ref{lemma-OU-of-X} we have $\omega_{X_{n,i}/U_n}\simeq \mathcal{E}_{i+1}^{-1} \otimes \mathcal{E}_i$
and again, since $\mathcal{E}_{i+1}^{-1} \otimes \mathcal{E}_i|_{l_p}\simeq \mathcal{O}(-2)$, the second condition holds as well.
\end{proof}

\begin{corollary} The functors $G_{m+2n}^i: \mathcal{D}_{n-1} \rightarrow \mathcal{D}_n$ are strongly spherical. \end{corollary}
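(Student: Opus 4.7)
The plan is to explicitly identify the cotwist $F_{G_{m+2n}^i}(1)$ with the shift $[-3]$. Since Proposition \ref{prop-G-spherical} already guarantees that $G_{m+2n}^i$ is spherical, the cotwist exists as an autoequivalence of $\mathcal{D}_{n-1}$ and fits in an exact triangle
\[
F_{G_{m+2n}^i}(1) \to \mathrm{id} \xrightarrow{\eta} (G_{m+2n}^i)^R \circ G_{m+2n}^i,
\]
where $\eta$ is the adjunction unit, so it suffices to compute this triangle.

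First, I would compute $(G_{m+2n}^i)^R \circ G_{m+2n}^i$. By Lemma \ref{lemma-F-G-shifted-adjoints} the right adjoint is $F_{m+2n}^i[-1]$, and Lemma \ref{lemma-FG-splitting} yields $F_{m+2n}^i \circ G_{m+2n}^i \simeq \mathrm{id}[-1] \oplus \mathrm{id}[1]$, so
\[
(G_{m+2n}^i)^R \circ G_{m+2n}^i \simeq \mathrm{id}[-2] \oplus \mathrm{id}.
\]
Next, I would argue that under this decomposition the unit $\eta$ is the inclusion of the $\mathrm{id}$ summand. Its component into $\mathrm{id}[-2]$ corresponds to a class in $\mathrm{Ext}^{-2}(\mathcal{O}_\Delta,\mathcal{O}_\Delta)=0$ (where $\mathcal{O}_\Delta$ is the Fourier--Mukai kernel of the identity), hence vanishes, while its component into the $\mathrm{id}$ summand is a scalar multiple of the identity which must be a unit scalar in view of the triangle identity $\varepsilon G_{m+2n}^i \circ G_{m+2n}^i \eta = \mathrm{id}_{G_{m+2n}^i}$. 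The cone of such a summand inclusion is the complementary summand $\mathrm{id}[-2]$, and rotating the triangle gives $F_{G_{m+2n}^i}(1)\simeq\mathrm{id}[-3]$, which is the definition of strongly spherical.

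The main obstacle is the step identifying $\eta$ with the summand inclusion; the computation of $(G_{m+2n}^i)^R\circ G_{m+2n}^i$ itself is a direct consequence of the preceding lemmas. The key inputs are the vanishing of negative self-Exts of the diagonal kernel and the triangle identities for the adjunction $((G_{m+2n}^i)^L, G_{m+2n}^i, (G_{m+2n}^i)^R)$; both are standard but genuinely necessary, and together they reduce the problem to the trivial cone computation above.
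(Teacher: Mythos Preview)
Your proposal is correct and follows essentially the same argument as the paper's proof: both compute $(G_{m+2n}^i)^R G_{m+2n}^i \simeq \mathrm{id}[-2]\oplus\mathrm{id}$ from Lemmas \ref{lemma-F-G-shifted-adjoints} and \ref{lemma-FG-splitting}, use the vanishing $\mathrm{Hom}(\mathcal{O}_\Delta,\mathcal{O}_\Delta[-2])=0$ to kill the first component of the unit, and invoke the triangle identity to see that the second component is an isomorphism. The only cosmetic difference is that the paper phrases the last step as ``multiply by $G_{m+2n}^i$ and compose with the counit to get the identity'' rather than naming it as a triangle identity.
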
 
\begin{proof} 
By  Lemmas \ref{lemma-F-G-shifted-adjoints} and \ref{lemma-FG-splitting}, 
\begin{align*}
 (G_{m+2n}^i)^R G_{m+2n}^i \simeq  F_{m+2n}^i G_{m+2n}^i[-1] \simeq \mathrm{id}[-2]\oplus \mathrm{id}.
\end{align*}
The kernel of this Fourier-Mukai transform is isomorphic to  $\mathcal{O}_\Delta[-2]\oplus \mathcal{O}_\Delta$, where $\Delta\subset U_n\times U_n$ is the diagonal.
Since $\mathcal{O}_\Delta$ is a sheaf on a smooth algebraic variety, we have $\mathrm{Hom}( \mathcal{O}_\Delta,  \mathcal{O}_\Delta[-2])=0$, so the
adjunction unit  $\mathrm{id}\to (G_{m+2n}^i)^R G_{m+2n}^i$ must be a multiple of the embedding
$\mathrm{id}\xrightarrow{0\oplus \mathrm{id}}  \mathrm{id}[-2]\oplus \mathrm{id}$. This map is non-zero, since we can multiply it by $G_{m+2n}^i$
to get the map $G_{m+2n}^i \to G_{m+2n}^i (G_{m+2n}^i)^R G_{m+2n}^i$ that composes to identity with the map
$G_{m+2n}^i (G_{m+2n}^i)^R G_{m+2n}^i \to G_{m+2n}^i$ induced by the adjunction counit. Therefore, the cone of the adjunction unit
is isomorphic to $\mathrm{id}[-2]$, which proves the assertion.
\end{proof}

The following two propositions are duplicates of Propositions 5.6 and 5.16 in \cite{ck}, and the proofs from \cite{ck}, which are direct computations
with Fourier-Mukai kernels, work in our case verbatim,
except that we need to use our Corollary \ref{transverse2} instead of Corollary 5.4 from \cite{ck} for a certain transversality statement.

\begin{proposition} \label{circle} 
$F_{m+2n}^{i} \circ G_{m+2n}^{i+1} \simeq \mathrm{id} \simeq F_{m+2n}^{i+1} \circ G_{m+2n}^i$. 
\end{proposition}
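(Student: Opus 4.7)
The plan is to follow the approach of \cite[Proposition 5.6]{ck} essentially verbatim, performing a direct Fourier--Mukai kernel computation. First I would identify the kernel of $F_{m+2n}^i \circ G_{m+2n}^{i+1}$ via the standard convolution formula as
\[
\pi_{13*}\Bigl(\pi_{12}^*\mathcal{G}_{m+2n}^{i+1} \otimes^L \pi_{23}^*\mathcal{F}_{m+2n}^i\Bigr) \in D^b(\mathrm{Coh}(U_{n-1} \times U_{n-1})),
\]
where the $\pi_{ab}$ are the natural projections from $U_{n-1} \times U_n \times U_{n-1}$. The line-bundle twists $\pi_2^*\mathcal{E}_{i+1}$ in $\mathcal{G}_{m+2n}^{i+1}$ and $\pi_1^*\mathcal{E}_{i+1}^{-1}$ in $\mathcal{F}_{m+2n}^i$ both pull back from the same line bundle on the middle $U_n$ factor, so they cancel and can be dropped from the computation.

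Next I would apply Corollary \ref{transverse2}(1) with $j = i+1 \neq i$, which guarantees that the closed subvarieties $\pi_{12}^{-1}(X_{n,i+1})$ and $\pi_{23}^{-1}(X_{n,i})$ meet transversely inside $U_{n-1} \times U_n \times U_{n-1}$ -- this is the one place where our Corollary \ref{transverse2} plugs in in place of \cite[Corollary 5.4]{ck}. Transversality then ensures that the derived tensor product $\pi_{12}^*\mathcal{O}_{X_{n,i+1}} \otimes^L \pi_{23}^*\mathcal{O}_{X_{n,i}}$ reduces to the honest structure sheaf $\mathcal{O}_W$, with $W = \pi_{12}^{-1}(X_{n,i+1}) \cap \pi_{23}^{-1}(X_{n,i})$, so no higher Tor contributions appear.

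The geometric heart of the argument, again transcribed from \cite{ck}, is to show that the restricted projection $\pi_{13}|_W : W \to U_{n-1} \times U_{n-1}$ is a closed immersion with image the diagonal $\Delta_{U_{n-1}}$. On points, a triple $(W_\bullet^{(1)}, V_\bullet, W_\bullet^{(2)}) \in W$ has $V_\bullet \in X_{n,i} \cap X_{n,i+1}$, so (at generic $x$, which is enough by reducedness) $xV_{i+1} = V_{i-1}$ and $xV_{i+2} = V_i$; unwinding the identification of Proposition \ref{prop-isomorphism-Un-partial_Un+1}, the two projections $X_{n,i+1} \to U_{n-1}$ (forgetting $V_{i+1}$) and $X_{n,i} \to U_{n-1}$ (forgetting $V_i$) therefore send $V_\bullet$ to the same flag in $U_{n-1}$, forcing $W_\bullet^{(1)} = W_\bullet^{(2)}$, and conversely any $W_\bullet \in U_{n-1}$ lifts uniquely to such a $V_\bullet$. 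Consequently $\pi_{13*}\mathcal{O}_W \simeq \mathcal{O}_{\Delta_{U_{n-1}}}$, which is the Fourier--Mukai kernel of the identity, giving $F_{m+2n}^i \circ G_{m+2n}^{i+1} \simeq \mathrm{id}$. The isomorphism $F_{m+2n}^{i+1} \circ G_{m+2n}^i \simeq \mathrm{id}$ follows by the symmetric argument, swapping the roles of $X_{n,i}$ and $X_{n,i+1}$.

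The main obstacle is the explicit scheme-theoretic identification of $W$ with $\Delta_{U_{n-1}}$: transversality delivers the correct dimension count, but promoting the pointwise bijection $W_\bullet \mapsto V_\bullet$ to an isomorphism of schemes requires carefully unpacking the Mirkovi\'c--Vybornov transverse slice identification from Section \ref{section-varieties-Un}. This is precisely the difficulty handled in \cite[Proposition 5.6]{ck}, and our setup (in particular Proposition \ref{prop-isomorphism-Un-partial_Un+1} and Lemma \ref{sub}, which present $U_n$ as a closed subvariety of $Q_{m+2n}$ compatibly with the embeddings of the $X_{n,i}$) has been arranged specifically so that their verification carries over line-by-line.
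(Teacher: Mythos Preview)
Your proposal is correct and follows precisely the approach the paper indicates: the paper states that the proof of \cite[Proposition~5.6]{ck} carries over verbatim as a direct Fourier--Mukai kernel computation, with the single substitution of Corollary~\ref{transverse2} for \cite[Corollary~5.4]{ck} at the transversality step, and that is exactly what you have written out. Your identification of the cancelling line-bundle twists, the use of Corollary~\ref{transverse2}(1), and the diagonal identification of $W$ are all on target.
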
 

\begin{proposition} The following relations hold: \begin{enumerate} \item $G_{m+2k+2}^{i+l} \circ G_{m+2k}^i \simeq G_{m+2k+2}^i G_{m+2k}^{i+l-2}$ for $l \geq 2$; \item $G_{m+2k}^{i+l-2} \circ F_{m+2k}^{i} \simeq F_{m+2k+2}^{i} G_{m+2k+2}^{i+l}$, $G_{m+2k}^i \circ F_{m+2k}^{i+l-2} \simeq F_{m+2k+2}^{i+l} G_{m+2k+2}^i$ for $l \geq 2$.  \end{enumerate}  \end{proposition}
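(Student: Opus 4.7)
The plan is to compute the Fourier-Mukai kernels of both sides of each claimed isomorphism explicitly and to exhibit an isomorphism at the level of kernels, following verbatim the strategy used in the proofs of Propositions 5.6 and 5.16 of \cite{ck}.

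First I would handle part (1). The convolution formula identifies the kernel of $G_{m+2k+2}^{i+l}\circ G_{m+2k}^i$ with
\[
(\pi_{13})_{*}\bigl(\pi_{12}^{*}(\mathcal{O}_{X_{k,i}}\otimes \pi_2^{*}\mathcal{E}_i)\otimes \pi_{23}^{*}(\mathcal{O}_{X_{k+1,i+l}}\otimes \pi_2^{*}\mathcal{E}_{i+l})\bigr)
\]
on $U_{k-1}\times U_k\times U_{k+1}$, with all functors derived. The essential geometric input is Corollary \ref{transverse2}(2): because $l\geq 2$, the conditions defining $\pi_{12}^{-1}(X_{k,i})$ and $\pi_{23}^{-1}(X_{k+1,i+l})$ involve non-adjacent positions in the flag, so the two subvarieties meet transversely. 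This allows me to replace the derived tensor product by the structure sheaf of the ordinary scheme-theoretic intersection, then apply base change and the projection formula to push down to $U_{k-1}\times U_{k+1}$. The result is the structure sheaf of a subvariety which is symmetric in the pairs of indices $(i,i+l)$ and $(i+l-2,i)$, twisted by a line bundle which is likewise symmetric; performing the identical computation on $G_{m+2k+2}^i\circ G_{m+2k}^{i+l-2}$ produces the same kernel, giving the desired isomorphism.

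Part (2) follows the same template. The convolution now takes place on $U_{k-1}\times U_k\times U_{k-1}$ for the first identity and on $U_k\times U_{k+1}\times U_k$ for the second, and the cap functor $F$ contributes a twist by $\mathcal{E}_\bullet^{-1}$ in place of an $\mathcal{E}_\bullet$ twist. Corollary \ref{transverse2}(2) again supplies the transversality of the relevant preimages of $X_{\bullet,i}$ and $X_{\bullet,i+l}$ under the appropriate projections, and after the analogous base-change and projection-formula reductions both kernels collapse to the same sheaf, matching twist for twist.

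The main obstacle will be the bookkeeping of line-bundle twists: the kernels carry multiple pullbacks of the $\mathcal{E}_j$'s from different factors of the triple product, and one must track these through the base change and projection formula to see that the twists on the two sides agree. Since the embedding $U_n\hookrightarrow Y_{m+2n}$ of Section \ref{subsection-varieties-general-setup} identifies $X_{n,i}$ with $U_n\cap X_{m+2n}^i$ and pulls back the $\mathcal{E}_j$'s from the ambient setup to our $\mathcal{E}_j$'s, this bookkeeping is formally identical to that performed in \cite{ck}; the only genuine new input is the substitution of our Corollary \ref{transverse2} for Corollary 5.4 of \cite{ck}.
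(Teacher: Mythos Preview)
Your proposal is correct and is precisely the paper's approach: the paper's proof consists of the single remark that Propositions 5.6 and 5.16 of \cite{ck} apply verbatim once one substitutes Corollary~\ref{transverse2} for their Corollary~5.4, and you have spelled out exactly that. One small bookkeeping correction: for part~(2) the relevant transversality is Corollary~\ref{transverse2}(1) rather than~(2), and the convolutions for the two sides of each identity live on different triple products (e.g.\ $U_k\times U_{k-1}\times U_k$ for the left side of the first identity versus $U_k\times U_{k+1}\times U_k$ for the right), with the comparison taking place after pushing down to $U_k\times U_k$.
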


Now we have verified the conditions of Theorem \ref{spherical}, so we introduce the twists $T^i_{m+2n}(l)$ and construct a weak representation of $\textbf{FTan}_m$.

\begin{definition} Define the functors $T_{m+2n}^i(1)$ and $T_{m+2n}^i(2)$ via the distinguished triangles: $$G_{m+2n}^i (G_{m+2n}^i)^R \rightarrow \mathrm{id} \rightarrow T_{m+2n}^i(1), \qquad T_{m+2n}^i(2) \rightarrow \mathrm{id} \rightarrow G_{m+2n}^i (G_{m+2n}^i)^L$$ \end{definition}

\begin{theorem} The assignments $$\Psi(g_{m+2n}^i) = G_{m+2n}^i, \Psi(f_{m+2n}^i) = F_{m+2n}^i$$ $$\Psi(t_{m+2n}^i(1)) = T_{m+2n}^i(1), \Psi(t_{m+2n}^i(2)) = T_{m+2n}^i(2)$$ $$\Psi(w_{m+2n}^i(1)) = [-1], \Psi(w_{m+2n}^i(-1)) = [1]$$ give rise to a weak representation of $\textbf{FTan}_m$ using the categories $\mathcal{D}_k$. \qed \end{theorem}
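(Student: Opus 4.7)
The plan is to derive this theorem as a direct application of Theorem \ref{spherical}, taking $\mathcal{C}_{m+2k}=\mathcal{D}_k$ and $S_{m+2k}^i=G_{m+2k}^i$. At this point all the substantive geometric work has already been done in the preceding subsections, so the argument reduces to checking off the hypotheses of that theorem one by one and matching up the assignments.

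First I would observe that the formal input of Theorem \ref{spherical} requires $S_{m+2k}^i$ to be strongly spherical. Sphericity is Proposition \ref{prop-G-spherical}, which rests on the numerical criterion of \cite{ALFibrations} and the description of the normal and relative dualizing bundles from Lemma \ref{lemma-OU-of-X}. Strong sphericity (the cotwist is $[-3]$) is the corollary after it, extracted from Lemmas \ref{lemma-F-G-shifted-adjoints} and \ref{lemma-FG-splitting} and the rigidity of $\mathcal{O}_\Delta$ on the smooth variety $U_n$. One also needs to identify the adjoints and the twists with $F_{m+2n}^i$ and $T_{m+2n}^i(l)$; this is precisely Lemma \ref{lemma-F-G-shifted-adjoints} together with the distinguished triangles defining $T_{m+2n}^i(1)$ and $T_{m+2n}^i(2)$.

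Next I would verify the three numbered conditions of Theorem \ref{spherical}. Condition (1), the Reidemeister $0$ relation $G_{m+2n}^i\circ F_{m+2n}^{i\pm 1}\simeq \mathrm{id}$ (equivalently $S L[-1]\simeq \mathrm{id}$ after substituting $L=F[1]$), is exactly Proposition \ref{circle}. Conditions (2) and (3), the cup--cup and cup--cap isotopy relations for $G_{m+2k}^i$ and its adjoint $F_{m+2k}^i$, are precisely the two parts of the proposition immediately following Proposition \ref{circle}, whose proofs carry over verbatim from Propositions 5.6 and 5.16 of \cite{ck} after substituting Corollary \ref{transverse2} for the transversality statement used there.

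Having verified the hypotheses, Theorem \ref{spherical} produces a weak representation of $\textbf{FTan}_m$ on the categories $\mathcal{D}_k$ in which cups, caps, crossings and framing twists correspond exactly to $G_{m+2n}^i$, $F_{m+2n}^i$, $T_{m+2n}^i(l)$ and $[\mp 1]$ respectively, which is the content of the theorem. There is no genuinely difficult step remaining: the only point that requires a small comment is the book-keeping conversion between left/right adjoints and shifted adjoints (matching $L=F[1]$, $R=F[-1]$ to the statement of Theorem \ref{spherical}), and the verification that the functors $G_{m+2n}^i$ preserve the subcategories $\mathcal{D}_k$ of sheaves supported on the Springer fibre, which is automatic because the maps $j_{n,i}$ and $\pi_{n,i}$ send $\mathcal{B}_{z_{n-1}}$ into $\mathcal{B}_{z_n}$ and conversely.
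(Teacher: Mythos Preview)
Your proposal is correct and follows exactly the paper's own approach: the theorem is stated with \qed in the paper because the preceding subsection has already assembled every hypothesis of Theorem \ref{spherical} (strong sphericity via Proposition \ref{prop-G-spherical} and its corollary, and the three relations via Proposition \ref{circle} and the proposition following it), and you have simply made that assembly explicit. One small wording slip: the Reidemeister~0 relation in Proposition \ref{circle} is $F_{m+2n}^{i}\circ G_{m+2n}^{i\pm 1}\simeq\mathrm{id}$, not $G\circ F$; this matches the form actually used in the proof of Theorem \ref{spherical} (e.g.\ $L_2S_1[-1]\simeq\mathrm{id}$) even though condition~(1) there is written in the opposite order.
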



\subsection{Functors $\Psi(\alpha): \mathcal{D}_p \rightarrow \mathcal{D}_q$ indexed by affine tangles} At this point, we have constructed a functor $\Psi(\alpha): \mathcal{D}_p \rightarrow \mathcal{D}_q$ for each framed linear $(m+2p, m+2q)$-tangle $\alpha$. To extend this construction to framed affine tangles, it suffices to construct a functor $\Psi(s_{m+2n}^{m+2n}): \mathcal{D}_n \rightarrow \mathcal{D}_n$ satisfying the relations in Lemma \ref{affrelns}. Define 
$S_{m+2n}^{m+2n}(\mathcal{F}) = \mathcal{F} \otimes \mathcal{E}_{m+2n}^{-1}$, and let $\Psi(s_{m+2n}^{m+2n}) := S_{m+2n}^{m+2n}$. The relations that we must check are the following:

\begin{proposition} The following identities hold, where $1 \leq i \leq m+2n-2, 1 \leq p \leq 2$: 
\begin{enumerate} 
\item $S_{m+2n-2}^{m+2n-2} \circ F_{m+2n}^i \simeq F_{m+2n}^i \circ S_{m+2n}^{m+2n}$; 
\item $ S_{m+2n}^{m+2n} \circ G_{m+2n}^i \simeq G_{m+2n}^i \circ S_{m+2n-2}^{m+2n-2}$; 
\item $S_{m+2n}^{m+2n} \circ T_{m+2n}^i(p) \simeq T_{m+2n}^i(p) \circ S_{m+2n}^{m+2n}$; 
\item $F_{m+2n}^{m+2n-1} \circ S_{m+2n}^{m+2n} \circ T_{m+2n}^{m+2n-1}(2) \circ S_{m+2n}^{m+2n} \circ T_{m+2n}^{m+2n-1}(2) \simeq F_{m+2n}^{m+2n-1}$; 
\item $S_{m+2n}^{m+2n} \circ T_{m+2n}^{m+2n-1}(2) \circ S_{m+2n}^{m+2n} \circ T_{m+2n}^{m+2n-1}(2) \circ G_{m+2n}^{m+2n-1} \simeq G_{m+2n}^{m+2n-1}$; 
\item $T_{m+2n}^{m+2n-1}(2) \circ S_{m+2n}^{m+2n} \circ T_{m+2n}^{m+2n-1}(2) \circ S_{m+2n}^{m+2n} 
\circ T_{m+2n}^{m+2n-1}(2) \simeq \\
\simeq S_{m+2n}^{m+2n} \circ T_{m+2n}^{m+2n-1}(2) \circ S_{m+2n}^{m+2n} 
\circ T_{m+2n}^{m+2n-1}(2) \circ T_{m+2n}^{m+2n-1}(2)$.
\end{enumerate} 
\end{proposition}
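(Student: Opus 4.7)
The plan is to separate the six identities into two groups: (1)--(3) are ``finite'' commutation relations between $S_{m+2n}^{m+2n}$ and the standard cup/cap/crossing functors at positions $i\le m+2n-2$, while (4)--(6) are the genuinely affine relations, requiring separate treatment.

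For (1) and (2), the crucial observation is that for $i\le m+2n-2$, the restriction $\mathcal{E}_{m+2n}|_{X_{n,i}}$ is isomorphic to $\pi_{n,i}^*\mathcal{E}_{m+2n-2}'$, where $\mathcal{E}_k'$ denotes the analogous tautological line bundle on $U_{n-1}$. Indeed, since the two-dimensional $\ker x$ lies in $V_{i+1}\subseteq V_{m+2n-2}$, the action of $x$ induces isomorphisms $V_k/V_{k-1}\xrightarrow{\sim}xV_k/xV_{k-1}$ for $k>i+1$, and via $\phi_x$ (Lemma \ref{1}) these are identified with $W_{k-2}/W_{k-3}$ on $U_{n-1}$. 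The identities (1), (2) then follow by straightforward application of the projection formula, using the descriptions $F_{m+2n}^i(\mathcal{G})=\pi_{n,i*}(j_{n,i}^*\mathcal{G}\otimes\mathcal{E}_{i+1}^{-1})$ and $G_{m+2n}^i(\mathcal{F})=j_{n,i*}(\pi_{n,i}^*\mathcal{F}\otimes\mathcal{E}_i)$. For (3), applying the exact autoequivalence $S_{m+2n}^{m+2n}$ termwise to the defining triangles of $T_{m+2n}^i(1)$ and $T_{m+2n}^i(2)$, and using (1), (2) together with Lemma \ref{lemma-F-G-shifted-adjoints}, exhibits $S\circ T(p)$ and $T(p)\circ S$ as cones of canonically isomorphic morphisms, giving the commutation.

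For (4)--(6), the decisive new input is the isomorphism
\[
\mathcal{E}_{m+2n-1}\otimes\mathcal{E}_{m+2n}|_{X_{n,m+2n-1}}\simeq \mathcal{O}_{X_{n,m+2n-1}}.
\]
This holds because on $X_{n,m+2n-1}$ one has $V_{m+2n-2}=xV_{m+2n}$, and $\phi_x$ identifies this with the pullback of the trivial bundle $\mathbb{C}^{m+2n-2}$ from $U_{n-1}$; hence $\det V_{m+2n-2}$ is trivial, and combined with triviality of $V_{m+2n}=\mathbb{C}^{m+2n}$, the quotient's determinant $\mathcal{E}_{m+2n-1}\otimes\mathcal{E}_{m+2n}$ is also trivial. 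Consequently the Fourier--Mukai kernel of $S_{m+2n}^{m+2n}\circ G_{m+2n}^{m+2n-1}$ becomes $\mathcal{O}_{X_{n,m+2n-1}}\otimes\pi_2^*\mathcal{E}_{m+2n-1}^2$. Using this, together with $L_{m+2n}^iG_{m+2n}^i\simeq\mathrm{id}\oplus\mathrm{id}[2]$ (from Lemma \ref{lemma-FG-splitting} after the shift in Lemma \ref{lemma-F-G-shifted-adjoints}), the triangle $T^i(2)\to\mathrm{id}\to G^iL^i$ immediately yields $T_{m+2n}^{m+2n-1}(2)\circ G_{m+2n}^{m+2n-1}\simeq G_{m+2n}^{m+2n-1}[1]$. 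An analogous computation with the twisted kernel handles $T(2)\circ S\circ G$ and $T(2)\circ S\circ G\circ S^{-1}$, invoking the pushforwards $R\pi_*\mathcal{E}_{m+2n-1}^k$ on the $\mathbb{P}^1$-fibres, which collapse cleanly because $\det Q\simeq\mathcal{O}$ where $Q=V_{m+2n}/V_{m+2n-2}$. Iterating these relations and using the commutations (1), (2) to move $S$ past cups and caps on the ``far'' side yields the four-term compositions appearing in (4), (5), (6).

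The main obstacle I anticipate is not any single ingredient but rather the bookkeeping in the iterated compositions: verifying that the morphisms assembled from adjunction units and counits compose to \emph{genuine} isomorphisms rather than merely producing the correct object up to isomorphism class. I would handle this by computing the Fourier--Mukai kernel of $S_{m+2n}^{m+2n}\circ T_{m+2n}^{m+2n-1}(2)$ directly as a two-term complex of sheaves supported on $X_{n,m+2n-1}$, and invoking Corollary \ref{transverse2} to control Tor-independence in the convolutions $(ST(2))^2$. This reduces each of (4), (5), (6) to a kernel-level identification on a product of the $U_n$'s, where the isomorphism $\mathcal{E}_{m+2n-1}\otimes\mathcal{E}_{m+2n}\simeq\mathcal{O}$ can be applied repeatedly to collapse the iterated twists to the identity functor composed with $F_{m+2n}^{m+2n-1}$, $G_{m+2n}^{m+2n-1}$, or $T_{m+2n}^{m+2n-1}(2)$ as required.
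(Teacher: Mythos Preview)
Your treatment of (1)--(3) matches the paper: the point is exactly that $\mathcal{E}_{m+2n}$ is pulled back along $\pi_{n,i}$ when $i<m+2n-1$, so tensoring by $\mathcal{E}_{m+2n}^{-1}$ commutes with every ingredient of $G_{m+2n}^i$, $F_{m+2n}^i$, and hence with $T_{m+2n}^i(p)$.

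For (4)--(6) you have identified the same decisive geometric input as the paper --- the triviality of $\mathcal{E}_{m+2n-1}\otimes\mathcal{E}_{m+2n}$ on $X_{n,m+2n-1}$, which the paper states as triviality of the rank-two bundle $\mathcal{V}_{m+2n}/\mathcal{V}_{m+2n-2}$ (Lemma~\ref{lemma-ker-x-trivial-on-X}). But your organisation diverges, and the paper's route is substantially shorter. The paper does \emph{not} iterate from $T(2)G\simeq G[1]$; instead it computes $F\,S\,T(2)\,S$ in one pass by writing $T(2)$ as the shifted cone $\{\mathrm{id}\to GG^L\}[-1]$, expanding, using $j^*j_*\simeq\mathrm{id}\oplus(-)\otimes\mathcal{E}_{m+2n-1}\mathcal{E}_{m+2n}^{-1}[1]$, then the projection formula and Grothendieck--Serre duality on the $\mathbb{P}^1$-bundle, and finally the triviality of $\mathcal{E}_{m+2n-1}\otimes\mathcal{E}_{m+2n}$ to collapse the answer to $F[-1]$. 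Since sphericality gives $F\,T(1)\simeq F[-1]$, one gets $F\,S\,T(2)\,S\simeq F\,T(1)$, hence (4); (5) is the dual computation; and (6) follows formally from (4), (5) and the triangle $T(2)\to\mathrm{id}\to GF[1]$, with no further kernel work.

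Your iterative plan --- computing $T(2)SG$, then $ST(2)SG$, etc.\ --- is not wrong in principle, but the intermediate objects (pushforwards of $\mathcal{E}_{m+2n-1}^k$ along $\pi$) become rank-$>1$ bundles, and tracking the actual morphisms in the cones (not just the isomorphism classes) is precisely the bookkeeping problem you flag. The paper sidesteps this entirely by computing the full composite $FST(2)S$ at once. Also, your remark about ``using (1), (2) to move $S$ past cups and caps on the far side'' cannot help here: relations (1) and (2) apply only for $i\le m+2n-2$, whereas every cup/cap in (4)--(6) sits at position $m+2n-1$. Finally, you treat (6) on the same footing as (4) and (5), but it is cheaper: once (4) and (5) are known, (6) drops out of the defining triangle for $T(2)$ without any kernel computation.
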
 
\begin{proof}
The sheaf $\mathcal{E}_{m+2n}$ is constant on the fibers of $\pi_{n,i}$ for $i<m+2n-1$, thus tensoring with $\mathcal{E}_{m+2n}^{-1}$
commutes with all parts of $G_{m+2n}^i$ and $F_{m+2n}^i$, so the first three statements follow immediately. 
We will prove the fourth statement by direct computation, and the fifth statement can be proven similarly.
The sixth statement follows from the fourth, the fifth, and the exact triangle
$T_{m+2n}^{m+2n-1}(2) \to \mathrm{id} \to G_{m+2n}^{m+2n-1}\circ F_{m+2n}^{m+2n-1}[1]$.

To save space, let us skip the indices when there is no ambiguity within the current proof: denote $G_{m+2n}^{m+2n-1}$ by $G$, 
$F_{m+2n}^{m+2n-1}$ by $F$, $T_{m+2n}^{m+2n-1}(l)$ by $T(l)$, $S_{m+2n}^{m+2n}$ by $S$, 
$X_{n,m+2n-1}$ by $X$, $j_{n,m+2n-1}$ by $j$, $\pi_{n,m+2n-1}$ by $\pi$. Recall that we use the same notation for $\mathcal{E}_i$ and
$j^*\mathcal{E}_i$, so we can say that tensor multiplication by $\mathcal{E}_i$ commutes with the functors $j^*$ and $j_*$.

By definition, $T(2)=\{\mathrm{id}\to GG^L\}[-1]$, and by Lemma \ref{lemma-F-G-shifted-adjoints} we have $G^L\simeq F[1]$, so we can write
\begin{align*}
FST(2)S \simeq 
\{\pi_*(\mathcal{E}_{m+2n}^{-3}\otimes j^*(-)) \to 
\pi_*(\mathcal{E}_{m+2n}^{-2}\otimes \mathcal{E}_{m+2n-1}\otimes j^*j_*\pi^*\pi_*(\mathcal{E}_{m+2n}^{-2}\otimes j^*(-)))[1]\}[-1].
\end{align*}
The adjunction map in the cone filters through the map $\pi_*(\mathcal{E}_{m+2n}^{-3}\otimes j^*(-))\to \pi_*(\mathcal{E}_{m+2n}^{-3}\otimes j^*j_*j^*(-))$.
Recall that $j^*j_*\simeq \mathrm{id}\oplus \mathcal{E}_{m+2n}^{-1}\otimes \mathcal{E}_{m+2n-1}\otimes (-)[1]$, and the adjunction morphism
$j^*\to j^*j_*j^*\simeq j^* \oplus  \mathcal{E}_{m+2n}\otimes \mathcal{E}_{m+2n-1}^{-1}\otimes j^*$ has identity for the first component 
$j^*\to j^*$. Observe that $\pi_*(\mathcal{E}_{m+2n}^{-1}\otimes \pi^*(-))\simeq 0$ since the restriction of $\mathcal{E}_{m+2n}^{-1}$ on
a fiber of $\pi$ is isomorphic to $\mathcal{O}(-1)$. Therefore, we can further evaluate $FST(2)S$ as the cone
\begin{align*}
\{\pi_*(\mathcal{E}_{m+2n}^{-3}\otimes j^*(-)) \to 
\pi_*(\mathcal{E}_{m+2n}^{-2}\otimes \mathcal{E}_{m+2n-1}\otimes \pi^*\pi_*(\mathcal{E}_{m+2n}^{-2}\otimes j^*(-)))[1]\}[-1]
\end{align*}
where the map is induced by the adjunction map 
$\mathrm{id}\to \pi^!\pi_*\simeq \mathcal{E}_{m+2n}^{-1}\otimes \mathcal{E}_{m+2n-1}\otimes \pi^*\pi_*[1]$.
By projection formula, that turns into
\begin{align}\label{equation-FST2S-intermediate}
\{\pi_*(\mathcal{E}_{m+2n}^{-3}\otimes j^*(-)) \to 
\pi_*(\mathcal{E}_{m+2n}^{-2}\otimes \mathcal{E}_{m+2n-1})\otimes \pi_*(\mathcal{E}_{m+2n}^{-2}\otimes j^*(-)))[1]\}[-1].
\end{align}
By Grothendieck-Serre duality, since $\omega_{X/U_{n-1}}\simeq \mathcal{E}_{m+2n}^{-1}\otimes \mathcal{E}_{m+2n-1}$, we have
$\pi_*(\mathcal{E}_{m+2n}^{-2}\otimes \mathcal{E}_{m+2n-1})\simeq (\pi_*\mathcal{E}_{m+2n})^\vee[-1]$. Observe that
 $\pi$ is by construction the projectivization of $\mathcal{V}_{m+2n}/\mathcal{V}_{m+2n-2}$, whereas
$\mathcal{E}_{m+2n}=\mathcal{V}_{m+2n}/\mathcal{V}_{m+2n-1}$ is the fiberwise $\mathcal{O}(1)$, so 
$\pi_*\mathcal{E}_{m+2n}\simeq \mathcal{V}_{m+2n}/\mathcal{V}_{m+2n-2}$. We can now rewrite \eqref{equation-FST2S-intermediate}
as follows, pulling $\pi_*$ and $j^*$ out of the cone:
\begin{align*}
\pi_* \otimes
\left\{ \mathcal{E}_{m+2n}^{-3} \to ( \mathcal{V}_{m+2n}/\mathcal{V}_{m+2n-2})^\vee \otimes \mathcal{E}_{m+2n}^{-2} \right\}
\otimes j^* (-)[-1].
\end{align*}
The map within the cone is given by $\iota\otimes\mathrm{id}$, where $\iota: (\mathcal{E}_{m+2n})^\vee\to (\mathcal{V}_{m+2n}/\mathcal{V}_{m+2n-2})^\vee$
is dual to the projection $\mathcal{V}_{m+2n}/\mathcal{V}_{m+2n-2}\to \mathcal{E}_{m+2n}$, and $\mathrm{id}:\mathcal{E}_{m+2n}^{-2} \to \mathcal{E}_{m+2n}^{-2}$. From the short exact sequence
\begin{align*}
0 \to  (\mathcal{E}_{m+2n})^\vee\to (\mathcal{V}_{m+2n}/\mathcal{V}_{m+2n-2})^\vee \to (\mathcal{E}_{m+2n-1})^\vee \to 0
\end{align*}
we see that the cone is isomorphic to $\pi_*(\mathcal{E}_{m+2n}^{-2}\otimes \mathcal{E}_{m+2n-1}^{-1}\otimes j^*(-))[-1]$.
Now, the sheaf $\mathcal{E}_{m+2n}\otimes \mathcal{E}_{m+2n-1}\cong \Lambda^2(\mathcal{V}_{m+2n}/\mathcal{V}_{m+2n-2})$
and hence by Lemma \ref{lemma-ker-x-trivial-on-X} below it is trivial on $X$, so we have proven
\begin{align*}
FST(2)S \simeq \pi_*(\mathcal{E}_{m+2n}^{-1}\otimes j^*(-))[-1] \simeq F[-1].
\end{align*}
Since $G$ is spherical, we have $G^LT(1)[-1]\simeq G^R$, and since by Lemma \ref{lemma-F-G-shifted-adjoints} we know that
$G^L\simeq F[1]$ and $G^R\simeq F[-1]$, it follows that $FT(1)\simeq F[-1]$. Then $FST(2)S\simeq F[-1]$ implies $FST(2)S\simeq FT(1)$, which concludes the proof.
\end{proof}

It remains to prove the following technical lemma:
\begin{lemma}\label{lemma-ker-x-trivial-on-X}
The vector bundle $\mathcal{V}_{m+2n}/\mathcal{V}_{m+2n-2}$ on $X_{n,m+2n-1}$ is trivial.
\end{lemma}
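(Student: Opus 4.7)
The plan is to trivialize $\mathcal{V}_{m+2n}/\mathcal{V}_{m+2n-2}$ by producing a constant rank-$2$ subbundle of $\mathcal{V}_{m+2n} \simeq \mathbb{C}^{m+2n}$ that is pointwise complementary to $\mathcal{V}_{m+2n-2}$ on $X_{n,m+2n-1}$. The candidate is the fixed $2$-dimensional subspace spanned by the ``top'' basis vectors $e_{m+n}$ and $f_n$ of the two Jordan blocks of $z_n$.

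The first step would be to show that $\dim \ker x = 2$ at every point of $X_{n,m+2n-1}$. The defining condition $x V_{m+2n} \subseteq V_{m+2n-2}$ gives $\dim \text{Im}(x) \leq m+2n-2$, and hence $\dim \ker x \geq 2$. For the reverse inequality, I would redo the type of calculation appearing in the proof of Lemma \ref{1} with the explicit parametrization of $x \in S_n$: if $v = \sum \alpha_i e_i + \sum \beta_j f_j$ satisfies $xv=0$, then matching coefficients of $e_1, \ldots, e_{m+n-1}$ and $f_1, \ldots, f_{n-1}$ forces $\alpha_i = 0$ for $i \geq 2$ and $\beta_j = 0$ for $j \geq 2$, so $\ker x \subseteq \mathbb{C} e_1 \oplus \mathbb{C} f_1$ at every point. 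Combined with the lower bound, $\dim \ker x = 2$ on all of $X_{n,m+2n-1}$, and the analogue of Lemma \ref{1} with $n$ in place of $n+1$ yields $\ker x = \mathbb{C} e_1 \oplus \mathbb{C} f_1$ together with an isomorphism $\phi_x \colon x V_{m+2n} \xrightarrow{\sim} \mathbb{C}^{m+2n-2}$ induced by the projection $\gamma_{m,n-1} \colon \mathbb{C}^{m+2n} \to \mathbb{C}^{m+2n-2}$ that kills $e_{m+n}$ and $f_n$.

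The existence of $\phi_x$ is equivalent to the pointwise direct-sum decomposition $x V_{m+2n} \oplus \text{span}(e_{m+n}, f_n) = \mathbb{C}^{m+2n}$. Since $V_{m+2n-2} \supseteq x V_{m+2n}$ and both have dimension $m+2n-2$, they coincide, so $V_{m+2n-2} \oplus \text{span}(e_{m+n}, f_n) = \mathbb{C}^{m+2n}$ pointwise on $X_{n,m+2n-1}$. Consequently the constant rank-$2$ subbundle $\text{span}(e_{m+n}, f_n) \subset \mathcal{V}_{m+2n}$ is everywhere complementary to $\mathcal{V}_{m+2n-2}$, and the composition $\text{span}(e_{m+n}, f_n) \hookrightarrow \mathcal{V}_{m+2n} \twoheadrightarrow \mathcal{V}_{m+2n}/\mathcal{V}_{m+2n-2}$ is a morphism of rank-$2$ vector bundles that is fiberwise, and therefore globally, an isomorphism. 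Since the source is manifestly trivial, so is $\mathcal{V}_{m+2n}/\mathcal{V}_{m+2n-2}$.

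The main obstacle is verifying the upper bound $\dim \ker x \leq 2$ at every point of $X_{n,m+2n-1}$: on the open stratum where $x$ has Jordan type $(m+n,n)$ it is immediate, but $X_{n,m+2n-1}$ also contains the degeneration locus where $V_{m+2n-2}$ properly contains $\text{Im}(x)$, and there one must appeal to the explicit slice parametrization rather than to Jordan-type considerations. Once the pointwise equality $\dim \ker x = 2$ is in place, Lemma \ref{1} supplies the trivialization automatically.
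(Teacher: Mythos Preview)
Your argument is correct and follows the same overall strategy as the paper's: exhibit a fixed $2$-dimensional subspace of $\mathbb{C}^{m+2n}$ that is pointwise complementary to $V_{m+2n-2}$ on $X_{n,m+2n-1}$, and conclude that the quotient bundle is trivial. The paper takes this subspace to be $W=\ker x=\langle e_1,f_1\rangle$, whereas you take $\mathrm{span}(e_{m+n},f_n)$, extracted from the projection $\gamma_{m,n-1}$ in Lemma~\ref{1}. Your choice is the one that actually works. Once one knows $\dim\ker x=2$ (which both arguments establish), it follows that $V_{m+2n-2}=\mathrm{Im}(x)$; but for any nonzero nilpotent one has $\ker x\cap\mathrm{Im}(x)\neq 0$, so the composite $\langle e_1,f_1\rangle\hookrightarrow V\twoheadrightarrow V/V_{m+2n-2}$ in the paper's argument is \emph{not} an isomorphism. (Already at $x=z_n$ with $n\geq 2$ one has $V_{m+2n-2}=\langle e_1,\dots,e_{m+n-1},f_1,\dots,f_{n-1}\rangle\supset\langle e_1,f_1\rangle$, so the map is zero.) Your appeal to the isomorphism $\phi_x$ of Lemma~\ref{1} instead identifies $\ker\gamma_{m,n-1}=\mathrm{span}(e_{m+n},f_n)$ as a genuine complement to $\mathrm{Im}(x)=V_{m+2n-2}$, and the trivialization follows.

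One minor remark: the ``degeneration locus where $V_{m+2n-2}$ properly contains $\mathrm{Im}(x)$'' that you flag as the main obstacle is in fact empty. The slice computation in the proof of Lemma~\ref{1} shows $\ker x\subseteq\langle e_1,f_1\rangle$ for \emph{every} $x\in S_n$, with no hypothesis on $\dim\ker x$; combined with your lower bound this forces $\mathrm{rk}\,x=m+2n-2$ at every point of $X_{n,m+2n-1}$.
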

\begin{proof}
Recall that by definition any point of $X_{n,m+2n-1}$ consists of the data of a full flag $0\subset V_1\subset\ldots\subset V_{m+2n}=V$ in a fixed
$m+2n$-dimensional space $V$, and a nilpotent element $x\in S_n$ such that $xV_i\subset V_{i-1}$ and moreover $xV_{m+2n}\subset V_{m+2n-2}$,
so $\mathrm{rk}\, x \leq m+2n-2$. From the definition of $S_n$ we see that $x$ must then satisfy $a_1=a_{m+2n}=b_1=b_{m+2n}=0$, 
and $\mathrm{ker}\, x = \langle e_1, f_1 \rangle$ is the same subspace $W\subset V$ for all points in $X_{n,m+2n-1}$. Then the map
$W\to V = V_{m+2n} \to V_{m+2n}/V_{m+2n-2}$ is an isomorphism for all points in $X_{n,m+2n-1}$ which produces an
isomorphism $W\otimes \mathcal{O}_{X_{n,m+2n-1}}\simeq \mathcal{V}_{m+2n}/\mathcal{V}_{m+2n-2}$.
\end{proof}

We have established the following theorem:
\begin{theorem} \label{afftan} The assignments \begin{align*} \Psi(g_{m+2n}^i) &= G_{m+2n}^i, \Psi(f_{m+2n}^i) = F_{m+2n}^i \\ \Psi(t_{m+2n}^i(1)) &= T_{m+2n}^i(1), \Psi(t_{m+2n}^i(2)) = T_{m+2n}^i(2) \\ \Psi(w_{m+2n}^i(1)) &= [-1], \Psi(w_{m+2n}^i(-1)) = [1] \\ \Psi(s_{m+2n}^{m+2n}) &= S_{m+2n}^{m+2n} \end{align*} give rise to a weak representation of $\textbf{AFTan}_m$ using the categories $\mathcal{D}_k$. 
\qed \end{theorem}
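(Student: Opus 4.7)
The plan is to assemble the theorem from two components that are already in place: the weak representation of the linear framed tangle category $\textbf{FTan}_m$ established in the previous subsection, and the additional data and relations needed to extend it across the cut that turns linear tangles into affine ones.

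First, I would recall that any framed affine tangle can be written as a composition of the framed linear generators together with $s_{m+2n}^{m+2n}$ (equivalently $\hat{s}_{m+2n}^{m+2n}$): the identity $\hat{r}_n = \hat{s}_n^n \circ \hat{t}_n^{n-1}(2) \circ \cdots \circ \hat{t}_n^1(2)$ lets one eliminate the rotation generators in favour of $s_n^n$, and the framed analogue of Lemma \ref{affrelns} reduces all affine relations to the linear ones in \eqref{AFTanMovesFirst}–\eqref{TanMovesLast}, \eqref{AFTanMovesFirstTwist}–\eqref{FTanMovesLast} together with the four relations listed in that lemma. Thus, to upgrade the weak representation of $\textbf{FTan}_m$ (already produced by applying Theorem \ref{spherical} to the functors $G_{m+2n}^i$, which we verified to be strongly spherical in Proposition \ref{prop-G-spherical} and its corollary, with the compatibility relations supplied by Propositions \ref{circle} and the subsequent proposition) to a weak representation of $\textbf{AFTan}_m$, it suffices to define the single additional functor $\Psi(s_{m+2n}^{m+2n}) := S_{m+2n}^{m+2n}$ and to check the four relations of Lemma \ref{affrelns}.

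Next I would invoke the proposition immediately preceding the theorem, in which exactly these six identities (the three commutation relations of $S_{m+2n}^{m+2n}$ with $F$, $G$, and $T(p)$, the two ``cap/cup absorption'' relations, and the mixed braid relation) are verified. Concretely: the first three follow because $\mathcal{E}_{m+2n}$ is constant along the fibres of $\pi_{n,i}$ for $i<m+2n-1$, so tensoring with $\mathcal{E}_{m+2n}^{-1}$ commutes with each ingredient of $G_{m+2n}^i$ and $F_{m+2n}^i$, and hence with their twists; the fourth is obtained by a direct Fourier–Mukai computation that culminates in the isomorphism $FST(2)S \simeq F[-1] \simeq FT(1)$, after using the triviality of $\mathcal{V}_{m+2n}/\mathcal{V}_{m+2n-2}$ on $X_{n,m+2n-1}$ (Lemma \ref{lemma-ker-x-trivial-on-X}); the fifth is entirely parallel; and the sixth follows from the first two by applying the exact triangle defining $T_{m+2n}^{m+2n-1}(2)$.

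Putting these pieces together, every framed affine tangle $\alpha$ can be written as a word in the generators, the value $\Psi(\alpha)$ obtained by composing the assigned functors is well-defined up to isomorphism (all relations in a generating set of relations have been checked), and composition of tangles is taken by $\Psi$ to composition of functors, which is the definition of a weak representation. The only part of this argument that required real content beyond formal assembly was the verification of the six relations in the preceding proposition, and the main obstacle there—the identity $FST(2)S\simeq F[-1]$—was handled by the direct kernel-level calculation together with Lemma \ref{lemma-ker-x-trivial-on-X}; once these are in hand, the present theorem is a formal consequence of Theorem \ref{spherical}, Lemma \ref{affrelns}, and the preceding proposition.
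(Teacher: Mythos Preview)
Your proposal is correct and matches the paper's own approach exactly: the theorem carries a bare \qed because it is meant to be a formal assembly of Theorem \ref{spherical} (applied via Proposition \ref{prop-G-spherical}, its corollary, and Propositions \ref{circle} ff.) with the extension to affine tangles via Lemma \ref{affrelns} and the six-item proposition immediately preceding the theorem. You have identified all the ingredients and the logical order in which they combine.
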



\section{The exotic $t$-structure on $\mathcal{D}_n$}

First we recall that the construction of the exotic $t$-structure on $\mathcal{D}_n$ from \cite{bm} is given by the following. Let $\mathbb{B}_{aff}$ denotes the braid group attached to the affine Weyl group $W_{aff} = W \ltimes \Lambda$, where $W$ is the Weyl group of $\mathfrak{g}=\mathfrak{sl}_{m+2n}$, and $\Lambda$ is the weight lattice. Let $\mathbb{B}_{aff}^{\text{Cox}} \subset \mathbb{B}_{aff}$ denote the braid group attached to the $W_{aff}^{\text{Cox}} = W \ltimes Q$ where $Q$ is the root lattice. Denote by $\mathbb{B}_{aff}^+ \subset \mathbb{B}_{aff}^{\text{Cox}}$ the semigroup generated by the lifts of the simple reflections $\tilde{s}_{\alpha}$ in the Coxeter group $W_{aff}^{\text{Cox}}$.

Using Bezrukavnikov and Mirkovic's construction (see Sections $1.1.1$ and $1.3.2$ of \cite{bm}), there exists a weak action of the affine braid group $\mathbb{B}_{aff}$ on $\mathcal{D}_n$ (i.e. for every $b \in \mathbb{B}_{aff}$, there exists a functor $\Psi(b): \mathcal{D}_n \rightarrow \mathcal{D}_n$, such that $\Psi(b_1 b_2) \simeq \Psi(b_1) \circ \Psi(b_2)$). This action is related to that from the previous section using the following result.

\begin{lemma} $\mathbb{B}_{aff}$ can be identified with the group of all bijective $(m+2n, m+2n)$ affine tangles (i.e. where each strand connects a point in the inner circle with a point in the outer circle). Under this identification, the action of $\mathbb{B}_{aff}$ on $\mathcal{D}_n$ coincides with the action coming from Theorem \ref{afftan}. \qed \end{lemma}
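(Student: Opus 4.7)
The plan is to establish the lemma in two stages: an abstract group isomorphism between $\mathbb{B}_{aff}$ and the group $\mathrm{BAff}_N$ of bijective $(N,N)$-affine tangles (with $N=m+2n$), and then agreement of the two categorical actions checked on a set of generators.

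For the first stage, I would compare presentations. The extended affine braid group for $\mathfrak{sl}_N$ has a presentation with generators $T_1,\ldots,T_{N-1}$ satisfying the type-$A_{N-1}$ braid relations, together with a generator $\pi$ realizing the quotient $\Lambda/Q\simeq \mathbb{Z}/N\mathbb{Z}$ and acting by conjugation as $\pi T_i \pi^{-1}=T_{i+1\bmod N}$. On the tangle side, $\mathrm{BAff}_N$ is generated topologically by the $(N,N)$-crossings $t_N^i(1)$ for $1\le i\le N-1$ together with $s_N^N$ and the rotations $r_N,r_N'$; using Lemma \ref{affrelns}, one can rewrite $r_N=s_N^N\circ t_N^{N-1}(2)\circ\cdots\circ t_N^1(2)$ and read off precisely the extended affine braid relations. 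Sending $T_i\mapsto t_N^i(1)$ and $\pi\mapsto r_N^{-1}$ (or the appropriate power) gives the required isomorphism.

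For the second stage, I would check compatibility of the two actions on the generators $T_i$ and $\pi$. The Bezrukavnikov--Riche action associated to the simple reflection $s_i$ is defined by the Fourier--Mukai functor with kernel $\mathcal{O}_{Z_{n,i}}$ (the structure sheaf of the relevant Steinberg component of $\widetilde{\mathfrak{g}}\times_{\mathfrak{g}}\widetilde{\mathfrak{g}}$), restricted to the fibre product with the transverse slice $S_n$; by standard spherical-twist formalism this is precisely the Fourier--Mukai kernel of the twist $T_{m+2n}^i(1)$ that we constructed as the cone of the counit $G_{m+2n}^i (G_{m+2n}^i)^R\to \mathrm{id}$. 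For the generator $\pi$, the Bezrukavnikov--Riche action is given by tensoring with a specific line bundle on $\widetilde{\mathfrak{g}}$, which restricts to a twist by the corresponding $\mathcal{E}_k$ on $U_n$, and this matches our rotation functor $\Psi(r_N^{-1})$ (a composition of linear crossings with $S_{m+2n}^{m+2n}=-\otimes \mathcal{E}_{m+2n}^{-1}$) up to the relation expressing $r_N$ via Lemma \ref{affrelns}. Agreement on all of $\mathbb{B}_{aff}$ then follows formally from both actions being weak, i.e. associative up to isomorphism.

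The main obstacle is the careful matching of the lattice part of $\mathbb{B}_{aff}$: the Bezrukavnikov--Riche construction is phrased in terms of the Bernstein--Lusztig presentation and uses line bundles on $\widetilde{\mathfrak{g}}$ globally, so one must verify that the restriction to the transverse-slice geometry $U_n\hookrightarrow Y_{m+2n}$ identifies that line-bundle twist with our operator $S_{m+2n}^{m+2n}$ (up to the auxiliary shifts absorbed by the framing). This verification uses the transversality of $S_n$ with the Steinberg-type strata (Proposition \ref{prop-isomorphism-Un-partial_Un+1} and Lemma \ref{lemma-transverse}) to reduce it to a clean base-change computation; the remaining braid and rotation relations are then formal.
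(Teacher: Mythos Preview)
Your two-stage strategy is right, and the first stage (matching presentations) is fine. The issue is in the second stage, specifically your treatment of the generator $\pi$. You assert that in the Bezrukavnikov--Riche action $\pi$ is given by tensoring with a line bundle, but $\pi$ is an element of $\Omega=\Lambda/Q$, not a lattice element; in the Bernstein presentation it is $\theta_{\omega}\cdot T_{w}^{-1}$ for a minuscule $\omega$ and a suitable $w\in W$, hence in \cite{br}/\cite{bm} it acts as a line-bundle twist composed with a chain of crossing functors, not as a pure line-bundle twist. So the sentence ``the BR action for $\pi$ is a line bundle, and this matches $\Psi(r_N^{-1})$'' is either false or circular (you would be re-using the crossing comparison inside the $\pi$ comparison without saying so).

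The paper sidesteps this by choosing a better generating set: the crossings $t_{m+2n}^i(2)$ together with $(s_{m+2n}^{m+2n})^{-1}$. The element $s_{m+2n}^{m+2n}$ \emph{is} a genuine lattice element, so in \cite{bm} it acts by tensoring with $\mathcal{E}_{m+2n}^{-1}$, which is exactly the definition of $S_{m+2n}^{m+2n}$ in Theorem~\ref{afftan}. For the crossings, rather than invoking ``standard spherical-twist formalism'', the paper cites \cite{ALFourierMukai}, Corollary~4.5, to compute the Fourier--Mukai kernel of $T_{m+2n}^i(2)$ explicitly and match it with the kernel in \cite{bm}. Your final paragraph about transversality and base change is a red herring: no such geometric input is used; the comparison is a direct identification of Fourier--Mukai kernels on $U_n\times U_n$.
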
 
\begin{proof}
The affine braid group with $m+2n$ strands is generated by linear crossings $t_{m+2n}^i(2)$, $1\leq i \leq m+2n-1$, and the braid $(s_{m+2n}^{m+2n})^{-1}$, while the monoid generated by the same elements can be identified with
 $\mathbb{B}_{aff}^+$.
We can use \cite{ALFourierMukai}, Corollary 4.5 to compute a Fourier-Mukai kernel for $T_{m+2n}^i(2)$ and see that it coincides with the Fourier-Mukai
kernel for the corresponding braid group element given in \cite{bm}. Furthermore, the functor $S_{m+2n}^{m+2n}$ from the previous section acts by tensor multiplication by the same line bundle as it should according to \cite{bm}. 
Therefore the action defined here and the action from \cite{bm} are the same up to isomorphisms of Fourier-Mukai kernels.
\end{proof}

Following Bezrukavnikov and Mirkovic (see section $1.5$ of \cite{bm}), the exotic $t$-structure on $\mathcal{D}_n$ is defined as follows:
\begin{align*} \mathcal{D}_n^{\geq 0} &= \{ \mathcal{F} \  | \  R \Gamma(\Psi(b^{-1})\mathcal{F}) \in D^{\geq 0}(\text{Vect} ) \  \forall \  b \in \mathbb{B}_{aff}^+ \} \\ \mathcal{D}_n^{\leq 0} &= \{ \mathcal{F} \  | \  R \Gamma(\Psi(b) \mathcal{F}) \in D^{\leq 0}(\text{Vect} ) \  \forall \  b \in \mathbb{B}_{aff}^+ \} \end{align*}

By definition, the functors that correspond to positive braids are left $t$-exact, and the functors that correspond to negative braids are right $t$-exact. In particular, the functor $R_n$ that corresponds to the braid $r_n$ that is both positive and negative (since it has no crossings) is $t$-exact.

\begin{proposition} \label{texact} The functor $G_{m+2n}^i: \mathcal{D}_{n-1} \rightarrow \mathcal{D}_{n}$ is $t$-exact with respect to the exotic $t$-structures on the two categories. 
\end{proposition}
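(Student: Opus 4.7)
The plan is to use the definition of the exotic $t$-structure via global sections of positive-braid translates. By definition, $\mathcal{G}\in\mathcal{D}_n^{\leq 0}$ iff $R\Gamma(\Psi(b)\mathcal{G})\in D^{\leq 0}(\text{Vect})$ for every $b\in\mathbb{B}_{aff}^+$, and $\mathcal{G}\in\mathcal{D}_n^{\geq 0}$ iff $R\Gamma(\Psi(b^{-1})\mathcal{G})\in D^{\geq 0}(\text{Vect})$. So given $\mathcal{F}\in\mathcal{D}_{n-1}^{\leq 0}$ (resp.\ $\mathcal{D}_{n-1}^{\geq 0}$) and a positive affine braid $b$ on $m+2n$ strands, I need to bound the cohomological amplitude of $R\Gamma(\Psi(b)\circ G_{m+2n}^i\mathcal{F}) = R\Gamma(\Psi(b\circ g_{m+2n}^i)\mathcal{F})$.

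First I would use the affine tangle relations of Section~\ref{section-tangles} to rewrite the composite tangle $b\circ g_{m+2n}^i$, sliding the cup past the braid $b$. Cup-crossing isotopy (relation 8 in $\textbf{AFTan}$) handles crossings with strands disjoint from the cup, the pitchfork move (relation 11) handles crossings adjacent to a cup strand, framed Reidemeister~I absorbs a crossing directly above the cup into a framing twist, and the relations of Lemma~\ref{affrelns} handle the interaction with the affine generator $s^{m+2n}_{m+2n}$. The outcome should be an isomorphism of functors
\[\Psi(b)\circ G_{m+2n}^i\;\simeq\; G_{m+2n}^j\circ\Psi(b')\,[k]\]
for some index $j$, a positive affine braid $b'\in\mathbb{B}_{aff}^+$ on $m+2n-2$ strands, and an integer shift $k$ recording the accumulated framing twists.

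Given this identification, the geometric input is the projection formula for the $\mathbb{P}^1$-fibration $\pi_{n,j}: X_{n,j}\to U_{n-1}$:
\[R\Gamma\bigl(G_{m+2n}^j\,\Psi(b')\mathcal{F}\bigr) \;=\; R\Gamma\bigl(U_{n-1},\ \Psi(b')\mathcal{F}\otimes R\pi_{n,j*}\mathcal{E}_j\bigr).\]
Since $\mathcal{E}_j$ restricts to $\mathcal{O}_{\mathbb{P}^1}(-1)$ on every fiber of $\pi_{n,j}$, the pushforward $R\pi_{n,j*}\mathcal{E}_j$ vanishes, so the expression is zero and in particular lies in $D^{\leq 0}$, placing $G_{m+2n}^i\mathcal{F}$ in $\mathcal{D}_n^{\leq 0}$. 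Running the same argument with $b^{-1}$ in place of $b$ and the analogous rewriting places $G_{m+2n}^i\mathcal{F}$ in $\mathcal{D}_n^{\geq 0}$, and combined this gives $t$-exactness.

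The hard part will be the tangle manipulation. Positivity of $b'$ is not automatic: the pitchfork move exchanges $t(1)\leftrightarrow t(2)$, and a naive pull-through would produce a mixed braid, breaking positivity. The book-keeping must argue that each type-flip is compensated either by a matching flip elsewhere in $b$, by a framing twist absorbed into $k$, or by a change of the cup index $j$. The most delicate case is when $i=m+2n-1$ and $b$ involves $s^{m+2n}_{m+2n}$, since the resulting line bundle on the divisor becomes $\omega_{X_{n,m+2n-1}/U_{n-1}}\simeq\mathcal{E}_{m+2n-1}\otimes\mathcal{E}_{m+2n}^{-1}$ (by Lemma~\ref{lemma-OU-of-X}) rather than $\mathcal{E}_j$ for a standard cup. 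If the direct approach becomes intractable in this case, a backup would be to invoke the adjunctions $G_{m+2n}^{i,L}=F_{m+2n}^i[1]$ and $G_{m+2n}^{i,R}=F_{m+2n}^i[-1]$ from Lemma~\ref{lemma-F-G-shifted-adjoints}: $t$-exactness of $G_{m+2n}^i$ is equivalent to $F_{m+2n}^i[1]$ being right $t$-exact and $F_{m+2n}^i[-1]$ being left $t$-exact, and this dual statement can be attacked by the analogous tangle argument running "from below" through caps instead of cups.
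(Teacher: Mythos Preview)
Your primary line of argument cannot work as stated, and in fact proves too much. You correctly note that $\mathcal{E}_j$ restricts to $\mathcal{O}(-1)$ on every fibre of $\pi_{n,j}$, so $R(\pi_{n,j})_*\mathcal{E}_j=0$ and hence $R\Gamma\bigl(G_{m+2n}^j\,\mathcal{G}\bigr)=0$ for \emph{every} $\mathcal{G}$. But then your commutation $\Psi(b)\circ G^i\simeq G^j\circ\Psi(b')[k]$, applied to both positive $b$ and to $b^{-1}$, would force $R\Gamma(\Psi(b^{\pm1})G^i\mathcal{F})=0$ for all $b\in\mathbb{B}_{aff}^+$ and all $\mathcal{F}$, placing $G^i\mathcal{F}$ in the heart for every $\mathcal{F}$ --- impossible for a nonzero triangulated functor. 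So the commutation claim itself must fail. (Your worry about positivity of $b'$ is a red herring: if the commutation held, positivity of $b'$ would be irrelevant since $R\Gamma\circ G^j$ vanishes unconditionally.)

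The commutation fails concretely when the cup strand acquires nonzero winding number around the origin. Take $b=(s_{m+2n}^{m+2n})^{-1}\in\mathbb{B}_{aff}^+$ and $i=m+2n-1$: the cup arc now winds once around the puncture, and no tangle of the form $g_{m+2n}^j\circ b'$ (where the cup sits unlinked at the outer boundary) has this property. On the level of functors one computes
\[
\Psi(b)\,G^{m+2n-1}(\mathcal{F})\;\simeq\;(j_{n,m+2n-1})_*\bigl(\pi^*\mathcal{F}\otimes\mathcal{E}_{m+2n-1}\otimes\mathcal{E}_{m+2n}\bigr),
\]
and since $\mathcal{E}_{m+2n-1}\otimes\mathcal{E}_{m+2n}\simeq\Lambda^2(\mathcal{V}_{m+2n}/\mathcal{V}_{m+2n-2})$ is trivial on $X_{n,m+2n-1}$ (Lemma~\ref{lemma-ker-x-trivial-on-X}), the global sections are $R\Gamma(U_{n-1},\mathcal{F})$, generally nonzero.

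Your backup plan is the correct route and is what the paper actually does. Passing to adjoints, one must show $(G^{m+2n-1})^R\simeq F^{m+2n-1}[-1]$ is left $t$-exact and $(G^1)^L\simeq F^1[1]$ is right $t$-exact (the reduction to these two indices uses conjugation by the $t$-exact rotation $R_n$). The key asymmetry is that commuting a positive braid $b$ on $m+2n-2$ strands past a \emph{cap} $f^i$, i.e.\ writing $b\circ f^i=f^i\circ\epsilon_i(b)$, goes from fewer strands to more strands and hence preserves positivity (Lemma~\ref{move}); no affine winding is created. The remaining step is not a vanishing but a genuine degree bound: $R\Gamma\bigl((G^1)^L\mathcal{G}\bigr)\in D^{\leq 0}$ for $\mathcal{G}\in\mathcal{D}_n^{\leq0}$ (Lemma~\ref{vanish}), proved using the triangle $T^1(2)\to\mathrm{id}\to G^1(G^1)^L$ together with right $t$-exactness of the negative-braid functors. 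This is precisely where the hypothesis $\mathcal{G}\in\mathcal{D}^{\leq 0}$ enters.
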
 

To prove this, we will need the following two lemmas:

\begin{lemma} \label{move} Given $b \in \mathbb{B}'^+_{aff}$ considered as a bijective $(m+2n-2, m+2n-2)$-tangle, there exists bijective $(m+2n-2,m+2n-2)$ tangles $\epsilon_i(b), \eta_i(b) \in \mathbb{B}^+_{aff}$ such that $b \circ f_{m+2n}^i = f_{m+2n}^i \circ \epsilon_i(b), b^{-1} \circ f_{m+2n}^{i} = f_{m+2n}^i \circ \eta_i(b)^{-1}$. \end{lemma}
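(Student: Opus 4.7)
The plan is to construct $\epsilon_i(b)$ explicitly as a topological lift of $b$ to a positive affine braid on $m+2n$ strands (the statement appears to contain a typo: $\epsilon_i(b)$ and $\eta_i(b)$ should be bijective $(m+2n, m+2n)$-tangles, not $(m+2n-2, m+2n-2)$, for the compositions $f_{m+2n}^i \circ \epsilon_i(b)$ to type-check), and then derive the $\eta_i$-identity formally from the $\epsilon_i$-identity.

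First I reduce to verifying the identity on generators. The semigroup $\mathbb{B}^+_{aff}$ on $m+2n-2$ strands is generated by positive crossings $t^j_{m+2n-2}(2)$ for $1 \leq j \leq m+2n-3$ together with a positive lift of the affine simple reflection (realized as a wrapping generator). For each generator $b$, I define $\epsilon_i(b)$ by inserting two unlinked parallel strands at positions $i, i+1$ in the $m+2n$-strand picture while preserving crossing signs. Concretely, for $t^j_{m+2n-2}(2)$ with $j+1 < i$ or $j > i-1$, the lift is the elementary positive crossing $t^j_{m+2n}(2)$ or $t^{j+2}_{m+2n}(2)$ respectively, with indices shifted to accommodate the cap positions. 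For $j = i-1$, where the lifted swap involves the non-adjacent strands $i-1$ and $i+2$, the lift is the positive braid word $t^{i-1}_{m+2n}(2) \circ t^{i}_{m+2n}(2) \circ t^{i+1}_{m+2n}(2) \circ t^{i}_{m+2n}(2) \circ t^{i-1}_{m+2n}(2)$ which exchanges strands $i-1$ and $i+2$ while fixing strands $i, i+1$. For the affine generator, the lift is the corresponding wrapping generator on $m+2n$ strands, routed beneath the cap.

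Each lifted word is manifestly positive, and the identity $b \circ f_{m+2n}^i = f_{m+2n}^i \circ \epsilon_i(b)$ for each generator is verified by direct isotopy, using the cap--crossing and pitchfork relations from Definition \ref{DefinitionATan}. The identity extends multiplicatively: $\epsilon_i(b_1 b_2) = \epsilon_i(b_1) \circ \epsilon_i(b_2)$ follows by iterating the single-generator identity, and well-definedness on $\mathbb{B}^+_{aff}$ holds because the defining relations of the semigroup lift to the corresponding relations in the $m+2n$-strand affine braid group. For the second identity, setting $\eta_i(b) = \epsilon_i(b)$ suffices: composing $b \circ f_{m+2n}^i = f_{m+2n}^i \circ \epsilon_i(b)$ with $b^{-1}$ on the left and $\epsilon_i(b)^{-1}$ on the right yields exactly $b^{-1} \circ f_{m+2n}^i = f_{m+2n}^i \circ \epsilon_i(b)^{-1}$.

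The main technical obstacle is the verification of the $\epsilon_i$-identity in the case $j = i-1$, since the lifted crossing is a non-elementary five-letter positive braid word rather than a single generator; checking that this word isotopes past the cap requires assembling several pitchfork and cap--crossing relations in sequence. A related subtlety arises for the affine generator, where one must confirm that the wrapping strand, drawn beneath the cap arc, gives a positive braid under the paper's conventions, rather than acquiring a compensating negative crossing where it interacts with the cap.
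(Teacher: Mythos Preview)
Your approach is essentially the same as the paper's: reduce to generators of $\mathbb{B}'^+_{\mathrm{aff}}$, use the cap--crossing isotopy for crossings not adjacent to the cap, give an explicit positive lift for the single adjacent generator $t^{i-1}_{m+2n-2}$, extend multiplicatively, and set $\eta_i=\epsilon_i$. The paper describes the adjacent lift geometrically (the positive permutation braid swapping strands $i-1$ and $i+2$ beneath the straight strands $i,i+1$) rather than as your five--letter word, but these are the same element; you are also correct that $\epsilon_i(b),\eta_i(b)$ must be $(m+2n,m+2n)$--tangles, and your explicit treatment of the affine wrapping generator is in fact a detail the paper's proof elides.
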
 \begin{proof} Using the cap-crossing isotopy relation (9), we may define $\epsilon_i(t_{m+2n-2}^j)=\eta_i(t_{m+2n-2}^j)=t_{m+2n}^{j+2}$ if $j \geq i$, and $\epsilon_i(t_{m+2n-2}^j)=\eta_i(t_{m+2n-2}^j)=t_{m+2n}^{j}$ if $j \leq i-2$. Also define $\epsilon_i(t_{m+2n-2}^i)=\eta_i(t_{m+2n-2}^i)$ to be the tangle with a strand connecting $(\zeta_k,0)$ to $(2 \zeta_k,0)$ for $k \neq i-1,i$, and a strand connecting $(\zeta_{i-1},0)$ to $(2 \zeta_{i+2}, 0)$ that passes beneath a strand connecting $(\zeta_{i+2},0)$ to $(2 \zeta_{i-1},0)$. It is straightforward to check that with this definition, $\epsilon_i(t_{m+2n-2}^j)=\eta_i(t_{m+2n-2}^j) \in \mathbb{B}_{aff}^+$.

Given $b \in \mathbb{B}'^+_{aff}$, choose a decomposition $b =  t_{m+2n-2}^{i_1}(1) \circ t_{m+2n-2}^{i_2}(1) \circ \cdots \circ t_{m+2n-2}^{i_k}(1)$; clearly $\epsilon_i(b) = \epsilon_i(t_{m+2n-2}^{i_1}(1)) \circ \cdots \circ \epsilon_i(t_{m+2n-2}^{i_k}(1))$ and $\eta_i(b) = \eta_i(t_{m+2n-2}^{i_k}(1)) \circ \cdots \circ \eta_i(t_{m+2n-2}^{i_1}(1))$ satisfy the required condition. \end{proof}

\begin{lemma} \label{vanish} Given $\mathcal{F} \in \mathcal{D}_{n}^{\geq 0}$ and $\mathcal{G} \in \mathcal{D}_{n}^{\leq 0}$, we have $R\Gamma((G_{m+2n}^{m+2n-1})^R \mathcal{F}) \in \mathcal{D}^{\geq 0}(\text{Vect})$ and $R\Gamma((G_{m+2n}^{1})^L \mathcal{G}) \in \mathcal{D}^{\leq 0}(\text{Vect})$. \end{lemma}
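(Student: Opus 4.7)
My plan is to combine adjunction with the ideal-sheaf short exact sequence for the divisor $X_{n,m+2n-1}\subset U_n$ (resp.\ $X_{n,1}\subset U_n$ for Part 2), reducing the statement to bounds on $R\Gamma$ of $\mathcal{F}$ (resp.\ $\mathcal{G}$) tensored with line bundles $\mathcal{E}_k^{-1}$. Concretely, for Part 1, Lemma \ref{lemma-F-G-shifted-adjoints} gives $(G_{m+2n}^{m+2n-1})^R\simeq F_{m+2n}^{m+2n-1}[-1]$, and the adjunction $(G_{m+2n}^{m+2n-1},(G_{m+2n}^{m+2n-1})^R)$ at the level of $D^b(\mathrm{Coh}(U_n))$ gives
$$R\Gamma_{n-1}\bigl((G_{m+2n}^{m+2n-1})^R\mathcal{F}\bigr)\simeq R\mathrm{Hom}_{U_n}\bigl(j_{n,m+2n-1\,*}\mathcal{E}_{m+2n-1},\,\mathcal{F}\bigr).$$
Tensoring the ideal-sheaf sequence for $j_{n,m+2n-1}$ with $\mathcal{E}_{m+2n-1}$ and invoking Lemma \ref{lemma-OU-of-X}, one obtains the short exact sequence $0\to \mathcal{E}_{m+2n}\to \mathcal{E}_{m+2n-1}\to j_{n,m+2n-1\,*}\mathcal{E}_{m+2n-1}\to 0$ on $U_n$, whence the distinguished triangle
$$R\Gamma\bigl((G_{m+2n}^{m+2n-1})^R\mathcal{F}\bigr)\to R\Gamma(\mathcal{E}_{m+2n-1}^{-1}\otimes\mathcal{F})\to R\Gamma(\mathcal{E}_{m+2n}^{-1}\otimes\mathcal{F}).$$
Since the fiber of a morphism between two objects of $D^{\geq 0}(\mathrm{Vect})$ is again in $D^{\geq 0}$, it now suffices to show both outer terms lie in $D^{\geq 0}(\mathrm{Vect})$.

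The key step is to realize the line-bundle tensor functors as specific functors coming from the weak representation of $\textbf{AFTan}_m$. By construction $\mathcal{E}_{m+2n}^{-1}\otimes(-) = \Psi(s_{m+2n}^{m+2n})$, and using the commutation relations of Lemma \ref{affrelns} together with the pitchfork move, $\mathcal{E}_{m+2n-1}^{-1}\otimes(-)$ can be realized as $\Psi$ of a conjugate of $s_{m+2n}^{m+2n}$ by an adjacent linear crossing. Both of these tangles factor (using the $t$-exact rotation $r_{m+2n}$) so that their inverses lie in $\mathbb{B}_{aff}^+$; hence the associated functors preserve $\mathcal{D}_n^{\geq 0}$, and combined with the $t$-exactness of $R\Gamma$ for the exotic $t$-structure, both outer terms of the triangle lie in $D^{\geq 0}$. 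Finally, the general case of the positivity condition in the definition of $\mathcal{D}_n^{\geq 0}$ (requiring $R\Gamma(\Psi(b^{-1})(-))\in D^{\geq 0}$ for every $b\in\mathbb{B}_{aff}^+$, not merely $b=1$) is handled by invoking Lemma \ref{move}: commuting $\Psi(b^{-1})$ past $F_{m+2n}^{m+2n-1}$ replaces $b$ by some $\eta_{m+2n-1}(b)\in\mathbb{B}_{aff}^+(n)$ acting on $\mathcal{F}$, and $\Psi(\eta_{m+2n-1}(b)^{-1})\mathcal{F}$ remains in $\mathcal{D}_n^{\geq 0}$, so the preceding argument applies verbatim.

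Part 2 follows by a symmetric argument using $(G_{m+2n}^1)^L \simeq F_{m+2n}^1[+1]$ and the dual Koszul sequence $0\to \mathcal{E}_1^{-1}\to \mathcal{E}_2^{-1}\to j_{n,1\,*}\mathcal{E}_2^{-1}\to 0$, where the $[+1]$ shift in $G^L$ combines with the opposite orientation of the triangle to require the relevant tensor functors to fit the form $\Psi(b)[-1]$ for positive braids $b$, yielding the $D^{\leq -1}$ bounds on the outer terms needed to place the resulting cofiber in $D^{\leq -1}$, and hence $R\Gamma((G_{m+2n}^1)^L\mathcal{G})\in D^{\leq 0}$. The main obstacle I anticipate is exactly this identification of $\mathcal{E}_{m+2n-1}^{-1}\otimes(-)$ (and analogously $\mathcal{E}_2^{-1}\otimes(-)$) as $\Psi$ of an explicit affine braid together with a controlled shift, so that the degree bounds in the two triangles match up precisely. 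The specific positions $m+2n-1$ and $1$ are crucial here: they are the positions adjacent to the "cut" of the annulus, where the affine generator $s_{m+2n}^{m+2n}$ interacts with the cup/cap in the cleanest way.
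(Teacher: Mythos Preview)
Your reduction via the ideal-sheaf sequence is clean and the triangle
\[
R\Gamma\bigl((G_{m+2n}^{m+2n-1})^R\mathcal{F}\bigr)\to R\Gamma(\mathcal{E}_{m+2n-1}^{-1}\otimes\mathcal{F})\to R\Gamma(\mathcal{E}_{m+2n}^{-1}\otimes\mathcal{F})
\]
is correct, as is your observation that a fiber of a map between objects of $D^{\geq 0}(\mathrm{Vect})$ stays in $D^{\geq 0}$. The difficulty you yourself flag is the real one, and it is not resolved by the argument you sketch. The functor $\mathcal{E}_{m+2n}^{-1}\otimes(-)=\Psi(s_{m+2n}^{m+2n})$ corresponds to translation by the \emph{extremal} weight $-\epsilon_{m+2n}$, which is dominant; this is what makes it a braid of definite sign. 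By contrast, $\mathcal{E}_{m+2n-1}^{-1}\otimes(-)$ corresponds to translation by $-\epsilon_{m+2n-1}$, which is neither dominant nor anti-dominant (it pairs to $+1$ with $\alpha_{m+2n-2}^\vee$ and $-1$ with $\alpha_{m+2n-1}^\vee$). Writing it as a conjugate $t^{m+2n-1}(p)\circ s^{m+2n}\circ t^{m+2n-1}(q)$ necessarily mixes a positive and a negative crossing, and composing with powers of the $t$-exact rotation $r_{m+2n}$ does not repair this: you will not get a braid whose inverse lies in $\mathbb{B}_{\mathrm{aff}}^+$. So neither the left $t$-exactness of $\mathcal{E}_{m+2n-1}^{-1}\otimes(-)$ nor the weaker statement $R\Gamma(\mathcal{E}_{m+2n-1}^{-1}\otimes\mathcal{F})\in D^{\geq 0}$ follows from the braid-positivity machinery. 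The same obstruction applies to $\mathcal{E}_2^{-1}\otimes(-)$ in Part~2.

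The paper's proof sidesteps exactly this issue by a different reduction. Instead of resolving $j_*\mathcal{E}_{m+2n-1}$ by line bundles, it first uses the adjunction identity $(G^{m+2n-1})^L(\mathcal{E}_{m+2n}[-1])\simeq\mathcal{O}_{U_{n-1}}$ to rewrite $R\Gamma(G^R\mathcal{F})$ as $R\Gamma\bigl(\mathcal{E}_{m+2n}^{-1}[1]\otimes G G^R\mathcal{F}\bigr)$, and then invokes the \emph{spherical twist} triangle $T^{m+2n-1}(2)\to\mathrm{id}\to G G^L$. After tensoring with $\mathcal{E}_{m+2n}^{-1}$, the two outer terms are $\mathcal{E}_{m+2n}^{-1}\otimes T^{m+2n-1}(2)\mathcal{F}$ and $\mathcal{E}_{m+2n}^{-1}\otimes\mathcal{F}$; both functors are products of $s^{m+2n}$ and $t^{m+2n-1}(2)$ only, hence braids of a single sign, and the required $t$-exactness is immediate. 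The point is that the twist triangle lives entirely at the ``edge'' index $m{+}2n$, whereas your Koszul triangle forces the adjacent index $m{+}2n{-}1$ into play. One further remark: the paragraph invoking Lemma~\ref{move} for ``the general case of the positivity condition'' does not belong in this proof; that commutation is used in the proof of Proposition~\ref{texact}, not in Lemma~\ref{vanish}, whose conclusion concerns only $R\Gamma$ and not membership in $\mathcal{D}_{n-1}^{\geq 0}$.
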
 
\begin{proof} 
Let $\mathcal{F} \in \mathcal{D}_{n}^{\geq 0}$.
The chain of isomorphisms
$$(G_{m+2n}^{m+2n-1})^L(\mathcal{E}_{m+2n}[-1])\simeq F_{m+2n}^{m+2n-1} \mathcal{E}_{m+2n}\simeq (\pi_{n,m+2n-1})_*(j_{n,m+2n-1}^* \mathcal{O}_{U_n})\simeq\mathcal{O}_{U_{n-1}}$$
implies the following:
\begin{align*} 
\text{R}\Gamma((G_{m+2n}^{m+2n-1})^R \mathcal{F}) &\simeq \text{RHom}((G_{m+2n}^{m+2n-1})^L\mathcal{E}_{m+2n}[-1],(G_{m+2n}^{m+2n-1})^R \mathcal{F}) \\ &\simeq \text{RHom}(\mathcal{E}_{m+2n}[-1],G_{m+2n}^{m+2n-1}(G_{m+2n}^{m+2n-1})^R \mathcal{F}) \\ &\simeq \text{R}\Gamma(\mathcal{E}_{m+2n}^{-1}[1] \otimes G_{m+2n}^{m+2n-1}(G_{m+2n}^{m+2n-1})^R \mathcal{F}). 
\end{align*} 
We will prove that $\mathcal{E}_{m+2n}^{-1}[1] \otimes G_{m+2n}^{m+2n-1}(G_{m+2n}^{m+2n-1})^R \mathcal{F}\in\mathcal{D}_n^{\geq 0}$, which will imply the first statement of the lemma.
Since $G_{m+2n}^{m+2n-1}$ is strongly spherical and $(G_{m+2n}^{m+2n-1})^L\simeq (G_{m+2n}^{m+2n-1})^R[2]$, it suffices to prove that $\mathcal{E}_{m+2n}^{-1} \otimes G_{m+2n}^{m+2n-1}(G_{m+2n}^{m+2n-1})^L \mathcal{F}  \simeq \mathcal{E}_{m+2n}^{-1} \otimes G_{m+2n}^{m+2n-1}(G_{m+2n}^{m+2n-1})^R \mathcal{F}[2] \in \mathcal{D}_{n}^{\geq -1}$. We have a distinguished triangle $T_{m+2n}^{m+2n-1}(2) \mathcal{F} \rightarrow \mathcal{F} \rightarrow G_{m+2n}^{m+2n-1} (G_{m+2n}^{m+2n-1})^L \mathcal{F}$; and hence a distinguished triangle 
$$\mathcal{E}_{m+2n}^{-1} \otimes T_{m+2n}^{m+2n-1}(2) \mathcal{F} \rightarrow \mathcal{E}_{m+2n}^{-1} \otimes\mathcal{F} \rightarrow \mathcal{E}_{m+2n}^{-1} \otimes G_{m+2n}^{m+2n-1} (G_{m+2n}^{m+2n-1})^L \mathcal{F}.$$ 
The functor $\mathcal{E}_{m+2n}^{-1} \otimes (-)$ corresponds to the braid $s_{m+2n}^{m+2n}$. 
Then the functors $\mathcal{E}_{m+2n}^{-1} \otimes (-)$ and $\mathcal{E}_{m+2n}^{-1} \otimes T_{m+2n}^{m+2n-1}(2)$ are left exact since they both correspond to negative braids. Thus the objects $\mathcal{E}_{m+2n}^{-1} \otimes T_{m+2n}^{m+2n-1}(2) \mathcal{F}$ and $\mathcal{E}_{m+2n}^{-1} \otimes \mathcal{F}$ are in $\mathcal{D}_{n}^{\geq 0}$, and using the long exact sequence of cohomology we obtain that $\mathcal{E}_{m+2n}^{-1} \otimes G_{m+2n}^{m+2n-1}(G_{m+2n}^{m+2n-1})^L \mathcal{F} \in \mathcal{D}_{n}^{\geq -1}$, as required.

The proof of the second half of the lemma follows the same logic. Let $\mathcal{G} \in \mathcal{D}_n^{\leq 0}$. First we show that $(G_{m+2n}^1)^L \mathcal{E}_1\simeq\mathcal{O}_{U_{n-1}}$. By definition, $F_{m+2n}^1 \mathcal{E}_1 \simeq (\pi_{n,1})_*(j_{n,1}^* \mathcal{E}_1 \otimes \mathcal{E}_2^{-1})$.
 Since the map $\pi_{n,1}: X_{n,1} \rightarrow U_{n-1}$ is a $\mathbb{P}^1$ fibre bundle, we have $(\pi_{n,1})_* \omega_{X_{n,1}} [\text{dim } X_{n,1}] \simeq \omega_{U_{n-1}} [\text{dim } U_{n-1}]$. Since $U_{n-1}$ is a symplectic variety, $\omega_{U_{n-1}} \simeq \mathcal{O}_{U_{n-1}}$; so $(\pi_{n,1})_* \omega_{X_{n,1}} \simeq \mathcal{O}_{U_{n-1}}[-1]$. By Lemma \ref{lemma-OU-of-X} we have $\omega_{X_{n,1}} \simeq j_{n,1}^* \mathcal{E}_1 \otimes \mathcal{E}_2^{-1}$, so $(G_{m+2n}^1)^L \mathcal{E}_1\simeq F_{m+2n}^1 \mathcal{E}_1[1]\simeq(\pi_{n,1})_* \omega_{X_{n,1}}[1]\simeq\mathcal{O}_{U_{n-1}}$.

It then follows that: 
\begin{align*} \text{R}\Gamma((G_{m+2n}^{1})^L \mathcal{G}) &\simeq  \text{RHom}((G_{m+2n}^{1})^L \mathcal{E}_{1},(G_{m+2n}^{1})^L \mathcal{G}) \\ 
&\simeq \text{RHom}(\mathcal{E}_{1},G_{m+2n}^{1}(G_{m+2n}^{1})^L \mathcal{G}) \\ 
&\simeq \text{R}\Gamma(\mathcal{E}_{1}^{-1} \otimes G_{m+2n}^{1}(G_{m+2n}^{m+2n-1})^L \mathcal{G}) 
\end{align*} 
The functor $\mathcal{E}_1^{-1} \otimes (-)$ is right exact as it corresponds to the positive braid $s_{m+2n}^1$ that leaves the last $m+2n-1$ vertices in place, and winds the first vertex counterclockwise around the circle underneath the other strands. Using the exact triangle $T_{m+2n}^{1}(2) \mathcal{F} \rightarrow \mathcal{F} \rightarrow G_{m+2n}^{1}(G_{m+2n}^{m+2n-1})^L \mathcal{F}$, we deduce that $G_{m+2n}^{1}(G_{m+2n}^{m+2n-1})^L$ is right exact since $T_{m+2n}^{1}(2)$ is right exact. Thus $\mathcal{E}_{1}^{-1} \otimes G_{m+2n}^{1}(G_{m+2n}^{m+2n-1})^L \mathcal{G} \in \mathcal{D}_n^{\leq 0}$, as required.  \end{proof}

Now we are ready to prove Proposition \ref{texact}.

\begin{proof} The functors $G_{m+2n}^i$ are conjugate by the $t$-exact, invertible functor $R_n$; thus it suffices to prove that $G_{m+2n}^1$ is left $t$-exact, and that $G_{m+2n}^{m+2n-1}$ is right $t$-exact, or equivalently that $(G_{m+2n}^1)^L$ is right $t$-exact and $(G_{m+2n}^{m+2n-1})^R$ is left $t$-exact

Let $\mathcal{F} \in \mathcal{D}_{n}^{\leq 0}$. To prove that $(G_{m+2n}^1)^L$ is right $t$-exact, we must show that $(G_{m+2n}^1)^L\mathcal{F} \in \mathcal{D}_{n-1}^{\leq 0}$, or in other words,
\begin{align}\label{eqFrightexact} R \Gamma( \Psi(b) (G_{m+2n}^1)^L \mathcal{F}) \in \mathcal{D}^{\leq 0}(\text{Vect}) \  \forall \  b \in \mathbb{B}'^+_{aff}.
\end{align}
By Lemma \ref{lemma-FG-splitting} we have $(G_{m+2n}^1)^L\simeq F_{m+2n}^1[1]$ and by Lemma \ref{move} we know that for any positive braid $b$ there is a positive braid $\epsilon_i(b)$ such that $\Psi(b)F_{m+2n}^1\simeq F_{m+2n}^1\Psi(\epsilon_i(b))$, so \eqref{eqFrightexact} is equivalent to 
\begin{align} \label{eqFrightexact2}
R\Gamma((G_{m+2n}^1)^L \Psi(\epsilon_i(b)) \mathcal{F}) \in \mathcal{D}^{\leq 0}(\text{Vect}) \  \forall \  b \in \mathbb{B}'^+_{aff}. 
\end{align} 
The braid $\epsilon_i(b)$ is positive, so $\Psi(\epsilon_i(b))\mathcal{F}\in\mathcal{D}^{\leq 0}$, and \eqref{eqFrightexact2} follows from Lemma \ref{vanish}.

The proof that $(G_{m+2n}^{m+2n-1})^R$ is left $t$-exact follows analogously from Lemmas \ref{move} and \ref{vanish}.
\end{proof}

\begin{definition} Let $\mathcal{D}^0_n$ denote the heart of the exotic $t$-structure on $\mathcal{D}_n$. \end{definition}

The following theorem is the main result of this section:

\begin{theorem} \label{irred} The functor $G_n^i$ sends irreducible objects in $\mathcal{D}^0_{n-1}$ to irreducible objects in $\mathcal{D}^0_n$. \end{theorem}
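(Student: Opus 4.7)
The strategy will be to show that $G := G_{m+2n}^i$ is fully faithful on the heart with $\mathrm{End}(G\mathcal{F})=\mathbb{C}$, and then to rule out any non-trivial subobject of $G\mathcal{F}$ by combining adjunction with the strongly spherical identity $G^R G \simeq \mathrm{id}\oplus\mathrm{id}[-2]$.

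First, by Proposition \ref{texact}, $G$ is $t$-exact, so $G\mathcal{F}\in \mathcal{D}^0_n$ whenever $\mathcal{F}\in \mathcal{D}^0_{n-1}$. Invoking the strongly spherical identity from the corollary to Proposition \ref{prop-G-spherical}, and the general fact that negative Ext groups vanish between objects in the heart of any $t$-structure, adjunction yields for $X,Y\in \mathcal{D}^0_{n-1}$
\[
\mathrm{Hom}_{\mathcal{D}_n}(GX, GY) \;\simeq\; \mathrm{Hom}(X, G^RGY) \;\simeq\; \mathrm{Hom}(X,Y)\oplus \mathrm{Ext}^{-2}(X,Y) \;\simeq\; \mathrm{Hom}(X,Y).
\]
Hence $G$ is fully faithful on hearts, and in particular $\mathrm{End}(G\mathcal{F})\simeq \mathbb{C}$, so $G\mathcal{F}$ is indecomposable.

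Next I will argue by contradiction: suppose there is a non-split short exact sequence $0\to B\to G\mathcal{F}\xrightarrow{\pi} C\to 0$ in $\mathcal{D}^0_n$ with $B,C\neq 0$, and denote the inclusion by $\iota$. Applying the left $t$-exact functor $G^R$ gives a long exact sequence whose bottom segment is an injection $H^0G^RB \hookrightarrow H^0G^RG\mathcal{F}\simeq \mathcal{F}$; by simplicity of $\mathcal{F}$, either $H^0G^RB\simeq \mathcal{F}$ or $H^0G^RB=0$. In the first subcase, adjunction produces a non-zero map $\phi\colon G\mathcal{F}\to B$, and one checks that the composite $\iota\circ\phi$ corresponds under the adjunction iso $\mathrm{End}(G\mathcal{F})\simeq \mathrm{Hom}(\mathcal{F},H^0G^RG\mathcal{F})$ to the map $H^0G^R(\iota)\colon H^0G^RB\to \mathcal{F}$, which is an isomorphism; hence after rescaling $\iota\circ\phi=\mathrm{id}_{G\mathcal{F}}$, making $\iota$ split and forcing $B=G\mathcal{F}$, contradiction. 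So $H^0G^RB=0$, and a symmetric argument using the right $t$-exact functor $G^L$ forces $H^0G^LC=0$.

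The main obstacle will be ruling out this last case. Here the long exact sequences yield only an inclusion $\mathcal{F}\hookrightarrow H^0G^RC$ (corresponding to the surjection $\pi$ itself under adjunction) and a surjection $H^0G^LB\twoheadrightarrow \mathcal{F}$ (corresponding to $\iota$), so no immediate contradiction appears. My plan is to close the case by combining the strongly spherical identification $G^L\simeq G^R[2]$ with the spherical twist computation $T_{G}(1)\,G\mathcal{F}\simeq G\mathcal{F}[-1]$ (obtained from the triangle $GG^RG\mathcal{F}\to G\mathcal{F}\to T_{G}(1)G\mathcal{F}$ using that $G^RG\mathcal{F}\simeq \mathcal{F}\oplus\mathcal{F}[-2]$ and that the unit kills the $[-2]$ summand): applying $T_{G}(1)$ to the SES and chasing the resulting long exact sequence, together with the higher cohomology $H^1G^RB$ and $H^{-1}G^LC$ from the earlier LES's, should produce a non-trivial endomorphism of $G\mathcal{F}$ factoring through $B$ or $C$, again contradicting $\mathrm{End}(G\mathcal{F})=\mathbb{C}$. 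The delicate technical point is pinning down exactly which adjunction computation produces this endomorphism once both $H^0G^RB$ and $H^0G^LC$ vanish.
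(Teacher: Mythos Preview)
Your approach is entirely different from the paper's: the paper simply invokes Proposition~\ref{texact} together with a structural theorem from Section~4.2 of \cite{bm}, treating preservation of irreducibles as a black-box consequence of Bezrukavnikov--Mirkovi\'c's general theory of exotic $t$-structures. You instead attempt a direct argument from the strongly spherical formalism.

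The first two thirds of your argument are correct and clean. The identity $G^RG\simeq\mathrm{id}\oplus\mathrm{id}[-2]$ plus $t$-exactness indeed gives $\mathrm{Hom}(GX,GY)\simeq\mathrm{Hom}(X,Y)$, hence $G\mathcal{F}$ is indecomposable; and the case where $H^0G^RB\simeq\mathcal{F}$ (resp.\ $H^0G^LC\simeq\mathcal{F}$) does produce a splitting via adjunction, as you say.

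The gap is real and you name it yourself. In the remaining case $H^0G^RB=0=H^0G^LC$, your ``plan'' is not a proof: you have correctly computed $T(1)\,G\mathcal{F}\simeq G\mathcal{F}[-1]$, but applying $T(1)$ to the short exact sequence only gives a triangle $T(1)B\to G\mathcal{F}[-1]\to T(1)C$ with $T(1)B,T(1)C\in\mathcal{D}^{\ge0}$ (note $T(1)$ is \emph{left} $t$-exact here, since $t^i(2)\in\mathbb{B}^+_{\mathrm{aff}}$ makes $T(2)$ right $t$-exact and $T(1)$ its inverse). The long exact sequence then yields $H^0T(1)B=0$ and an exact piece $0\to H^0T(1)C\to H^1T(1)B\to G\mathcal{F}\to H^1T(1)C$, which does not force anything to vanish. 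I do not see how to manufacture a nonzero endomorphism of $G\mathcal{F}$ from this data, and the analogous computation with $T(2)$ is symmetric. The obstruction is that indecomposability plus $\mathrm{End}=\mathbb{C}$ does not imply simplicity in a general heart (think of the length-two indecomposable projective over the $A_2$ quiver), so some further input specific to the exotic $t$-structure---precisely what the cited theorem of \cite{bm} supplies---appears to be needed. Unless you can exhibit that missing adjunction computation explicitly, the argument is incomplete.
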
 \begin{proof} This follows from Proposition \ref{texact} and the Theorem in Section $4.2$ of \cite{bm}. \end{proof}


\subsection{Irreducible objects in the heart of the exotic $t$-structure on $\mathcal{D}_n$}

\begin{definition} Let an affine crossingless $(m,m+2n)$ matching be an affine $(m,m+2n)$-tangle whose vertical projection to $\mathbb{C}$ has no crossings, with the blackboard framing. Let an unlabelled affine crossingless matching $(m,m+2n)$-matching be an affine crossingless matching where the $m$ inner points are not labelled. Let $\text{Cross}(m,n)$ be the set of all unlabelled affine crossingless matchings. \end{definition}
\begin{figure}
\centering
\includegraphics[scale=0.75]{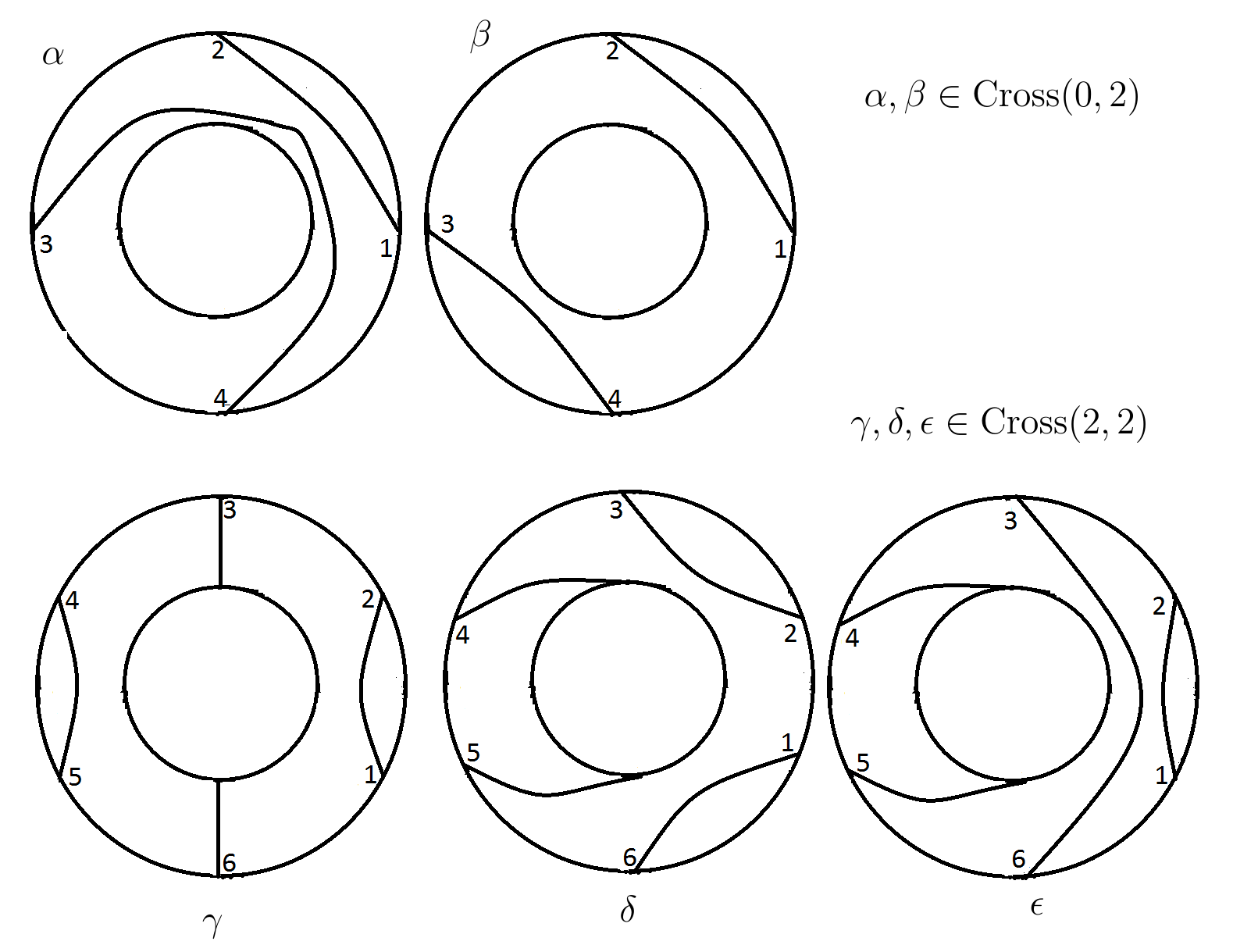}
\end{figure}

We will describe the irreducible objects in the heart of the exotic $t$-structure on $\mathcal{D}_n$ using the functors constructed in the previous section. 
\begin{lemma} We have $|\text{Cross}(m,n)|=\binom{m+2n}{n}$. \end{lemma}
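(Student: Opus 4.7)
The plan is to construct a bijection between $\text{Cross}(m,n)$ and the set $\mathcal{W}(m,n)$ of lattice walks $w\colon\{0,1,\ldots,m+2n\}\to\mathbb{Z}$ satisfying $w(0)=0$, $w(m+2n)=m$, and $w(i)-w(i-1)=\pm 1$ for all $i$. Since any such walk is determined by the $n$ positions at which it takes a down-step (the remaining $m+n$ positions being up-steps), $|\mathcal{W}(m,n)|=\binom{m+2n}{n}$, which yields the lemma.

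First I would define the forward map $\alpha\mapsto w_\alpha$. Fix a radial cut of the annulus just before the outer point $p_1=(2,0)$, and let $w_\alpha(i)$ be the signed count of arcs of a generic planar representative of $\alpha$ crossing the radial segment counterclockwise-immediately after $p_i$: a through-strand running from the outer circle to the inner contributes $+1$; an outer cup contributes $+1$ at its ``opening'' (counterclockwise-first) endpoint and $-1$ at its ``closing'' endpoint, with wrapping cups (those enclosing the inner hole) treated with the opposite sign convention so that the total bookkeeping is consistent. One then checks that $w_\alpha(0)=0$ and $w_\alpha(m+2n)=m$, the latter holding because exactly the $m$ through-strands straddle the final radial segment.

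Next I would construct the inverse. Given $w\in\mathcal{W}(m,n)$, process it from left to right with a stack, pairing each down-step with the most recent unpaired up-step to form a non-wrapping outer cup. The numbers of unpaired down-steps and up-steps that remain at the end differ by exactly $m$; the unmatched down-steps pair with later unmatched up-steps, in the unique nested fashion forced by crossinglessness, to form wrapping cups encircling the hole, and the final $m$ unpaired up-steps become through-strands to the $m$ unlabelled inner points. The resulting tangle has a crossingless planar representative, lies in $\text{Cross}(m,n)$, and inverts $\alpha\mapsto w_\alpha$.

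The hard part will be the careful verifications: that $w_\alpha$ is independent of the chosen planar representative of $\alpha$ (in particular invariant under the allowed 3-dimensional isotopies of affine tangles and under inner-relabeling), and that the reconstruction produces a well-defined matching with no ambiguity in the wrapping-cup pairings. The essential new feature of the annular setting, compared with the classical disc bijection where walks are required to be nonnegative and crossingless matchings are counted by the smaller ballot number $\binom{m+2n}{n}-\binom{m+2n}{n-1}$, is that our walks can dip below zero: each maximal below-zero excursion corresponds bijectively to a wrapping-cup component of the annular matching, and this accounts for precisely the extra $\binom{m+2n}{n-1}$ matchings beyond the disc count.
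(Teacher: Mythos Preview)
Your approach is essentially the same as the paper's: both set up a bijection between $\text{Cross}(m,n)$ and the $\binom{m+2n}{n}$ sign-sequences (equivalently, lattice walks) on the $m+2n$ outer points, with the $n$ minuses determining the cup structure and the remaining $m$ pluses becoming through-strands. The paper's version is considerably more direct---it only writes down the map from sign-sequences to matchings (``for each minus, move anti-clockwise and connect to the first plus with balanced intermediate count'') and leaves bijectivity implicit, thereby sidestepping the ad hoc ``opposite sign convention for wrapping cups'' in your forward map and the nesting verification you flag as the hard part.
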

\begin{proof} It suffices to construct a bijection between unlabelled affine $(m,m+2n)$ crossingless matchings and assignments of $m+n$ plus signs and $n$ minus to $m+2n$ labelled points on a circle. Given such an assignment of pluses and minuses to the points $(2,0), (2 \zeta_{m+2n}, 0), \cdots, (2 \zeta_{m+2n}^{m+2n-1}, 0)$, for each minus, move anti-clockwise around the circle and connect the minus to the first plus such that the number of pluses and minuses between these two points is equal. After connecting the $m$ remaining pluses on the outer circle to the $m$ unlabelled points on the inner circle without crossings, we have our desired unlabelled affine $(m,m+2n)$ crossingless matching. \end{proof}
\begin{lemma} Let $\widetilde{\alpha}$ be any affine $(m,m+2n)$-tangle, from which we obtain $\alpha$ by forgetting the labelling on the inner circle. Then the isomorphism class of the functor $\Psi(\widetilde{\alpha})$ depends only on $\alpha$. \end{lemma}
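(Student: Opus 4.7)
The plan is to first observe that two labellings $\widetilde{\alpha}_1,\widetilde{\alpha}_2$ of the same unlabelled tangle $\alpha$ differ by pre-composition with some bijective affine $(m,m)$-tangle $\beta\in\mathbb{B}_{aff}$: one has $\widetilde{\alpha}_2=\widetilde{\alpha}_1\circ\beta$, where $\beta$ is a lift of the corresponding permutation of the inner points. By the weak representation furnished by Theorem \ref{afftan}, this yields $\Psi(\widetilde{\alpha}_2)\simeq\Psi(\widetilde{\alpha}_1)\circ\Psi(\beta)$. The lemma thus reduces to showing $\Psi(\beta)\simeq\mathrm{id}_{\mathcal{D}_0}$ for every such $\beta$.

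To establish this, I would exploit that $z_0$ is the regular nilpotent of $\mathfrak{sl}_m$, so the Springer fiber $\mathcal{B}_{z_0}$ is a single point and $\mathcal{D}_0\simeq D^b(\mathrm{Vect})$ as noted in the paper. It then suffices to check on a generating set of $\mathbb{B}_{aff}$ that each generator acts trivially. The generator $S_m^m$ acts by tensoring with the line bundle $\mathcal{E}_m^{-1}$, which is trivial when restricted to sheaves supported at the one-point Springer fiber. For the crossing generators $T_m^i(l)$, defined in our setup as spherical twists by $G_m^i$, the source category $\mathcal{D}_{-1}$ does not exist, so one naturally interprets $G_m^i$ as zero and the twists collapse to the identity. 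Composing these observations, $\Psi(\beta)$ is the identity on $\mathcal{D}_0$.

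The main obstacle I anticipate is giving a rigorous treatment of the crossing generators $T_m^i(l)$ acting on $\mathcal{D}_0$: the spherical twist machinery of Theorem \ref{spherical} is stated for $k\geq 1$ and does not directly cover the $k=0$ case. One could either extend the spherical formalism to allow a zero source category and check that the twist-defining distinguished triangle collapses, or bypass this issue altogether by computing the relevant Fourier--Mukai kernels on the one-point Springer fiber directly, where they reduce to the identity kernel.
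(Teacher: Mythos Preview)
Your reduction is sound but you have made the problem harder than it is. Forgetting the labelling on the inner circle only introduces an ambiguity up to \emph{cyclic} rotation of the $m$ inner points, because those points sit at the $m$-th roots of unity with a fixed cyclic order; an arbitrary permutation cannot arise. Hence the bijective $(m,m)$-tangle $\beta$ relating two labellings is necessarily a power of the rotation $r_m$, not a general element of $\mathbb{B}_{\mathrm{aff}}$. The paper's proof simply observes this and checks that $\Psi(r_m)\simeq\mathrm{id}$ on $\mathcal{D}_0$.

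This matters because $r_m$ contains no crossings, so one never needs to make sense of $T_m^i(l)$ on $\mathcal{D}_0$ at all --- the obstacle you anticipate is entirely self-inflicted by the overgeneralization in your first step. To see $\Psi(r_m)\simeq\mathrm{id}$, note that $r_m$ is both a positive and a negative braid (having no crossings), so $\Psi(r_m)$ is $t$-exact for the exotic $t$-structure; since $\mathcal{D}_0\simeq D^b(\mathrm{Vect})$ and every $t$-exact autoequivalence of $D^b(\mathrm{Vect})$ is the identity, you are done. Your proposed verification that all of $\mathbb{B}_{\mathrm{aff}}$ acts trivially on $\mathcal{D}_0$ is in fact true, and your sketch for it is reasonable, but it is not needed for the lemma.
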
 \begin{proof} It is easy to check $\Psi(r_m)$ is isomorphic to the identity on $\mathcal{D}_0$; the result follows.\end{proof}
Thus given $\alpha \in \text{Cross}(m,n)$, we obtain a functor $\Psi(\alpha): \mathcal{D}_0 \rightarrow \mathcal{D}_n$. Let $\Psi_{\alpha}=\Psi(\alpha) \underline{v}$ denote the image of the $1$-dimensional vector space $\underline{v}$ in $\mathcal{D}_0 \simeq D^b(\text{Vect})$ under the functor $\Psi(\alpha)$.

\begin{proposition}\label{PropIrredObjects} The irreducible objects in the heart of the exotic $t$-structure on $\mathcal{D}_n$ are precisely given by $\Psi_{\alpha}$, as $\alpha$ ranges across the $\binom{m+2n}{n}$ unlabelled affine $(m,m+2n)$ crossingless matchings. \end{proposition}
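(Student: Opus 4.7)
The plan is to establish existence, distinctness, and exhaustion in that order.

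First, I would verify that each $\Psi_{\alpha}$ is irreducible in the heart. Since $\mathcal{D}_0 \simeq D^b(\text{Vect})$, the unique irreducible object in $\mathcal{D}_0^0$ is $\underline{\mathbb{C}}$. Any $\alpha\in\text{Cross}(m,n)$, after choosing any labelling of its inner endpoints, is isotopic to a product $\hat{g}_{m+2n}^{i_1}\circ\hat{g}_{m+2n-2}^{i_2}\circ\cdots\circ \hat{g}_{m+2}^{i_n}$ of cups, possibly preceded by rotations $r_m$ whose induced functor on $\mathcal{D}_0$ is the identity. (Affine winding can always be pushed through the cups by the cup-rotation relations; the blackboard framing means no non-trivial framing twists appear.) By Proposition \ref{texact} each cup functor $G_{m+2k}^{i}$ is $t$-exact, and by Theorem \ref{irred} each one sends irreducible objects in the heart to irreducible objects in the heart. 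Iterating, $\Psi_{\alpha}\in\mathcal{D}_n^0$ is irreducible.

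Second, I would prove $\Psi_{\alpha}\not\simeq\Psi_{\beta}$ for $\alpha\ne\beta$ by an adjunction argument. Let $\check{\alpha}$ be the inverted $(m+2n,m)$ tangle. Using that caps are (shifts of) adjoints to cups (Lemma \ref{lemma-F-G-shifted-adjoints}) and the weak representation property (Theorem \ref{afftan}),
\[
\mathrm{Hom}_{\mathcal{D}_n}(\Psi_{\alpha},\Psi_{\beta})
\simeq \mathrm{Hom}_{\mathcal{D}_0}\bigl(\underline{\mathbb{C}},\Psi(\check{\alpha}\circ\beta)\underline{\mathbb{C}}\bigr)[*].
\]
Because $\alpha\ne\beta$ are crossingless matchings of distinct combinatorial type, the composite $\check{\alpha}\circ\beta$ necessarily contains at least one cap that pairs two points which were not paired by a cup below it, so its normal form has a ``false'' cap applied directly to the $0$-object. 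Applying $F$ to any sheaf coming from a cup in an unmatched position yields $0$ via the projection-formula computation in Lemma \ref{lemma-FG-splitting} (using that the $\mathbb{P}^1$-bundle $\pi_{n,i}$ has vanishing pushforward of $\mathcal{E}_i^{-1}$). Hence $\Psi(\check{\alpha}\circ\beta)\underline{\mathbb{C}}=0$ and $\mathrm{Hom}(\Psi_{\alpha},\Psi_{\beta})=0$; since both are simple, this forces non-isomorphism. By the same token $\mathrm{End}(\Psi_{\alpha})=\mathbb{C}$, reconfirming irreducibility.

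Third, and this is the main obstacle, I need to show that the $\Psi_{\alpha}$ exhaust the simples in $\mathcal{D}_n^0$. The cleanest route is to invoke the general result of Bezrukavnikov--Mirkovi\'c (\cite{bm}, Section 1.8, via \eqref{bmrequiv}): the number of simple objects in the heart of the exotic $t$-structure on $D^b(\mathrm{Coh}_{\mathcal{B}_e}(\widetilde{\mathfrak{g}}))$ equals the number of irreducible components of $\mathcal{B}_e$, and in particular equals $\mathrm{rk}\, K_0(\mathcal{D}_n)$. For the two-block nilpotent $z_n$, Spaltenstein's formula gives $\#\mathrm{Irr}(\mathcal{B}_{z_n})=\binom{m+2n}{n}$, which by the combinatorial lemma above matches $|\text{Cross}(m,n)|$. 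Together with the first two steps this completes the classification. As an alternative not relying on the general theorem, one can argue directly in $K_0$: the images $[\Psi_{\alpha}]$ are naturally identified (via the Cautis--Kamnitzer-type identification $K_0(\mathcal{D}_n)\otimes\mathbb{Q}\simeq V^{\otimes m+2n}_{[m]}$ mentioned in Section 1.7) with a collection of vectors indexed by $\text{Cross}(m,n)$ that coincide with a (dual) canonical basis, hence form a $\mathbb{Q}$-basis of the Grothendieck group; since any simple in $\mathcal{D}_n^0$ contributes a non-zero class in $K_0$, and distinct simples give linearly independent classes, the count forces the list to be complete.
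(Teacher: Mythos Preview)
Your first step (irreducibility via iterated cup functors and Theorem~\ref{irred}) and your third step (exhaustion via the rank of $K_0$) are essentially the paper's argument.

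The gap is in your distinctness argument. Your claim that $\Psi(\check{\alpha}\circ\beta)\,\underline{\mathbb{C}}=0$ for $\alpha\neq\beta$ is false. A cap applied to a ``mismatched'' cup does not give zero: by Lemma~\ref{lemma-FG-splitting} one has $F_{m+2n}^i\circ G_{m+2n}^i\simeq\mathrm{id}[-1]\oplus\mathrm{id}[1]$, not zero, and Lemma~\ref{linkgen} shows that $\Psi(f_{m+2}^i\circ g_{m+2}^k)$ vanishes only in the very special case $|i-k|>1$ with $m>0$. For instance, with $m=0$, $n=1$, $\alpha=g_2^1$, $\beta=g_2^2$, Lemma~\ref{linkgenm0} gives $\Psi(\check{\alpha}\circ\beta)\,\underline{\mathbb{C}}\simeq\Lambda_0\neq 0$; for $m>0$, $\alpha=g_{m+2}^1$, $\beta=g_{m+2}^2$, one gets $\underline{v}$. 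Your appeal to ``vanishing pushforward of $\mathcal{E}_i^{-1}$'' is a misreading of Lemma~\ref{lemma-FG-splitting}.

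What is actually true, and what the paper uses, is that $\mathrm{Ext}^\bullet(\Psi_\alpha,\Psi_\beta)\simeq\Psi(\check{\alpha}\circ\beta)\,\underline{\mathbb{C}}[-n]$ is concentrated in strictly positive degrees when $\alpha\neq\beta$: by Theorem~\ref{final} this Ext space is $\Lambda^{\otimes\omega}\otimes\Lambda_0^{\otimes\omega_0}[-n]$, whose lowest degree is $n-\omega$, and $\omega(\check{\alpha}\circ\beta)=n$ forces $\alpha=\beta$. So $\mathrm{Hom}(\Psi_\alpha,\Psi_\beta)=0$ follows, but from the grading of the Ext computation, not from any vanishing of the functor itself. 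The paper simply defers distinctness to that computation; you should do the same, or at least replace the incorrect vanishing claim with the correct degree argument.
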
 
\begin{proof} It follows by induction that every affine crossingless $(m,m+2n)$ matching can be expressed as a product $g_{m+2n}^{i_{n}} \circ \cdots \circ g_{m+4}^{i_2} \circ g_{m+2}^{i_1}$. Thus $\Psi_{\alpha} \simeq G_{m+2n}^{i_n} \circ \cdots \circ G_{m+4}^{i_2} \circ G_{m+2}^{i_1} \underline{v}$; by Proposition \ref{irred}, $\Psi_{\alpha}$ is an irreducible object in the heart of the exotic $t$-structure. It will follow from the arguments in the next section that these irreducible objects are distinct. Since $K^0(\mathcal{D}_n^{\geq 0} \cap \mathcal{D}_n^{\leq 0})\simeq K^0(\mathcal{D}_n)\simeq K^0(\text{Coh}(\mathcal{B}_{z_n}))$, and $K^0(\text{Coh}(\mathcal{B}_{z_n}))$ has rank $\binom{m+2n}{n}$, these constitute all the irreducible objects in the heart of the exotic $t$-structure on $\mathcal{D}_n$. \end{proof}


\subsection{The $\text{Ext}$ space}
The goal of this section is to describe the space 
\begin{equation}\label{equation-Ext-algebra}
\text{Ext}^{\bullet}(\displaystyle \bigoplus_{\alpha \in \text{Cross}(m,n)} \Psi_{\alpha}, \displaystyle \bigoplus_{\alpha \in \text{Cross}(m,n)} \Psi_{\alpha}).
\end{equation}
Let $\gamma$ be an affine $(m+2p,m+2q)$ tangle. 
Denote by $\breve{\gamma}$ the affine $(m+2q,m+2p)$ tangle obtained by taking the inversion of $\gamma$ with respect to the unit circle and scaling so that we get an affine tangle.
\begin{lemma} 
The right adjoint to $\Psi(\gamma)$ is $\Psi(\breve{\gamma})[p-q]$. \end{lemma}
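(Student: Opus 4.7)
The plan is to reduce the claim to a verification on the generators of $\textbf{AFTan}_m$, and then check each generator separately by invoking the adjunction results from Section \ref{section-functors}.

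First I will establish compatibility with composition. Suppose $\gamma = \gamma_2 \circ \gamma_1$, where $\gamma_1$ is an affine $(m+2p, m+2q')$-tangle and $\gamma_2$ is an affine $(m+2q', m+2q)$-tangle. Since $\Psi$ is a weak representation, $\Psi(\gamma) \simeq \Psi(\gamma_2) \circ \Psi(\gamma_1)$, so $\Psi(\gamma)^R \simeq \Psi(\gamma_1)^R \circ \Psi(\gamma_2)^R$. On the tangle side, spatial inversion through the unit circle exchanges the inner and outer boundaries and therefore reverses the order of composition: $\breve{\gamma} = \breve{\gamma_1} \circ \breve{\gamma_2}$. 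The shifts telescope, $(p - q') + (q' - q) = p - q$, so assuming the result for $\gamma_1$ and $\gamma_2$ gives
\[
\Psi(\gamma)^R \simeq \Psi(\breve{\gamma_1})[p-q'] \circ \Psi(\breve{\gamma_2})[q'-q] \simeq \Psi(\breve{\gamma})[p-q].
\]
By the presentation of framed affine tangles via the generators $\hat{g}, \hat{f}, \hat{t}, \hat{w}, \hat{s}$ (the framed version of Lemma \ref{lemma-generators}, combined with Lemma \ref{affrelns}), it therefore suffices to check the identity on each elementary generator.

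Next I will verify the claim generator by generator. For a cup $\gamma = g_{m+2n}^i$ with $p-q = -1$, spatial inversion gives the cap $\breve{g_{m+2n}^i} = f_{m+2n}^i$, so the required identity $(G_{m+2n}^i)^R \simeq F_{m+2n}^i[-1]$ is exactly Lemma \ref{lemma-F-G-shifted-adjoints}. For a cap $\gamma = f_{m+2n}^i$ with $p-q = 1$, I deduce $(F_{m+2n}^i)^R \simeq G_{m+2n}^i[1]$ directly from the same lemma: since $G^L \simeq F[1]$, taking right adjoints of this identity of functors yields $G \simeq (F[1])^R \simeq F^R[-1]$, hence $F^R \simeq G[1]$. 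For a crossing $\gamma = t_{m+2n}^i(l)$, inversion flips over-strands to under-strands and so $\breve{t_{m+2n}^i(1)} = t_{m+2n}^i(2)$; by Proposition \ref{prop-G-spherical} the functor $G_{m+2n}^i$ is spherical, so $T_{m+2n}^i(1)$ and $T_{m+2n}^i(2)$ are quasi-inverse autoequivalences and hence each other's right adjoints, matching $p-q = 0$. For a framing twist $w_{m+2n}^i(l)$, the assigned functor is a shift by $\mp 1$; spatial inversion swaps positive and negative twists of the framing, and the right adjoint of $[-1]$ is $[+1]$, so the claim holds. Finally, $s_{m+2n}^{m+2n}$ acts by $\mathcal{E}_{m+2n}^{-1} \otimes (-)$, which is invertible with inverse tensoring by $\mathcal{E}_{m+2n}$; this inverse coincides with its right adjoint, and with the functor associated to $\breve{s_{m+2n}^{m+2n}} = (s_{m+2n}^{m+2n})^{-1}$.

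The main obstacle will be the combinatorial bookkeeping: carefully confirming that spatial inversion really does send each generator to the predicted one (flipping crossing type, swapping framing orientation, inverting the rotation $s$) up to isotopy. Once these base cases are verified, the lemma follows from the compositional induction and the sphericality of the cup functors established in Proposition \ref{prop-G-spherical} together with the shifted adjunction of Lemma \ref{lemma-F-G-shifted-adjoints}.
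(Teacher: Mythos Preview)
Your proof is correct and follows essentially the same approach as the paper: reduce to generators via the compositional identity $\breve{\gamma} = \breve{\gamma_1}\circ\breve{\gamma_2}$ and then verify the adjunction on each generator using Lemma~\ref{lemma-F-G-shifted-adjoints} and sphericality. In fact you are more thorough than the paper's own sketch, which only explicitly mentions $g_{m+2n}^k$, $f_{m+2n}^k$, and $t_{m+2n}^k(l)$, whereas you also treat the framing twists $w_{m+2n}^i(l)$ and the rotation $s_{m+2n}^{m+2n}$.
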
 
\begin{proof} 
From Section $2.3$, the right adjoint to $\Psi(\gamma)$ is $\Psi(\breve{\gamma})[p-q]$, when $\gamma$ is one of $g_{m+2n}^{k}$, $f_{m+2n}^k$, or $t_{m+2n}^k(l)$. The result follows, since any tangle is the composition of these tangles, and if $\gamma = \gamma_1 \circ \gamma_2$, $\breve{\gamma} = \breve{\gamma_2} \circ \breve{\gamma_1}$. 
\end{proof}
Now we will compute $\text{Ext}^{\bullet}(\Psi_{\alpha}, \Psi_{\beta})$ as a vector space, where $\alpha, \beta \in \text{Cross}(m,n)$.
\begin{align*} \text{Ext}^{\bullet}(\Psi_{\alpha}, \Psi_{\beta}) &= \text{Ext}^{\bullet}(\Psi(\alpha) \underline{v}, \Psi(\beta) \underline{v}) \\ &\cong \text{Ext}^{\bullet}(\underline{v}, \Psi(\breve{\alpha} \circ \beta)[-n] \underline{v}) \\ &\cong \Psi(\breve{\alpha} \circ \beta)[-n] \underline{v} \end{align*}
The explicit description is quite different for $m=0$ and $m>0$, since when $m=0$ the tangle $\breve{\alpha}\circ\beta$ is a link and may contain circles that go around the origin, and when $m>0$ this tangle either contains threads that go from the inner boundary to the outer boundary (and hence does not contain circles that go around the origin), or yields a zero functor.

Let $\Lambda \in D^b(\text{Vect})$ be a complex concentrated in degrees $1$ and $-1$, with dimension $1$ in those degrees, and let $\Lambda_0\in D^b(\text{Vect})$ be a complex that consists of a two-dimensional space $\mathbb{C}^2$ in degree $0$.
\begin{lemma} \label{linkgen} 
If $m>0$ then $\Psi(f_{m+2}^i \circ g_{m+2}^k) \underline{v} \cong \begin{cases} \Lambda \; \mbox{ if   } i=k \\ \underline{v} \; \mbox{ if   } |i-k|=1\mbox{ or   } \{i,k\}=\{1,m+2\} \\ 0 \; \mbox{ otherwise} \end{cases}$ 
\end{lemma}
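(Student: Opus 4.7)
The plan is a four-case analysis based on the affine positions of $i$ and $k$: (i) $i=k$; (ii) $|i-k|=1$; (iii) $\{i,k\}=\{1,m+2\}$ (affine wrap); and (iv) all other pairs. The first two cases follow immediately from earlier results: Lemma \ref{lemma-FG-splitting} gives $F_{m+2}^i G_{m+2}^i \simeq \mathrm{id}[-1]\oplus\mathrm{id}[1]$, so $\underline{v}\mapsto\Lambda$; and Proposition \ref{circle} gives $F_{m+2}^i G_{m+2}^{i\pm 1}\simeq \mathrm{id}$, so $\underline{v}\mapsto\underline{v}$.

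For the wrap case (iii), I will simplify the tangle algebraically in $\textbf{AFTan}_m$. Using the cap-rotation $f_{m+2}^{m+2}=r_m' f_{m+2}^{m+1} r_{m+2}$, the identity $r_{m+2} g_{m+2}^{1}=g_{m+2}^{m+2} r_m'$ (derived from the wrap relation $(r_n')^2 g_n^{n-1}=g_n^1$ together with the standard cup-rotation $r_n' g_n^i = g_n^{i+1} r_{n-2}'$), and Reidemeister 0 $f_{m+2}^{m+1} g_{m+2}^{m+2}\simeq\mathrm{id}$, I obtain
\[ f_{m+2}^{m+2}\circ g_{m+2}^{1} \;\simeq\; r_m'\circ f_{m+2}^{m+1}\circ(r_{m+2} g_{m+2}^{1}) \;\simeq\; r_m'\circ(f_{m+2}^{m+1} g_{m+2}^{m+2})\circ r_m' \;\simeq\; (r_m')^2. \]
A symmetric computation (using $f_n^{n-1} r_n^2 = f_n^1$) yields $f_{m+2}^{1}\circ g_{m+2}^{m+2}\simeq r_m^2$. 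Since $\Psi(r_m)$ is a $t$-exact autoequivalence of $\mathcal{D}_0=D^b(\mathrm{Vect})$ (it corresponds to an affine braid with no crossings, hence lies in both $\mathbb{B}_{aff}^{+}$ and $\mathbb{B}_{aff}^{-}$), and every $t$-exact autoequivalence of $D^b(\mathrm{Vect})$ is isomorphic to the identity, we get $\Psi(r_m)\underline{v}\simeq\underline{v}$, so the wrap case gives $\underline{v}$.

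Case (iv) is the main geometric content. Convolving the Fourier-Mukai kernels of Definition \ref{definition-kernels-G-F} on $U_0\simeq\mathrm{pt}$ yields
\[ F_{m+2}^i G_{m+2}^k \underline{v} \;\simeq\; R\Gamma\bigl(U_1;\mathcal{O}_{X_{1,i}}\otimes\mathcal{O}_{X_{1,k}}\otimes\mathcal{E}_k\otimes\mathcal{E}_{i+1}^{-1}\bigr). \]
Subcases where one of $i,k$ equals $m+2$ reduce to $i,k\in\{1,\dots,m+1\}$ by conjugating with the rotation equivalences as in case (iii). For $i,k\in\{1,\dots,m+1\}$ with $|i-k|\geq 2$, transversality of $X_{1,i}$ and $X_{1,k}$ (Lemma \ref{lemma-transverse}) lets me replace $\mathcal{O}_{X_{1,i}}\otimes\mathcal{O}_{X_{1,k}}$ by $\mathcal{O}_{X_{1,i}\cap X_{1,k}}$, so it suffices to show $X_{1,i}\cap X_{1,k}=\emptyset$.

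Proving this emptiness is the main obstacle. My plan is to show first that both $X_{1,i}$ and $X_{1,k}$ are contained in the Springer fiber $\mathcal{B}_{z_1}\subset U_1$: the defining condition $xV_{i+1}\subset V_{i-1}$ forces $\dim\ker x\geq 2$, and for $m>0$ the only point of $S_1\cap\mathcal{N}$ with a kernel of dimension at least $2$ is $z_1$ itself, since every other nilpotent in the Mirkovic-Vybornov slice is regular of Jordan type $(m+2)$, with one-dimensional kernel. Over $z_1$, working in the Jordan basis $e_1,\dots,e_{m+1},f_1$, the condition defining $X_{1,i}$ rigidifies the intermediate flag terms along the subspaces $\langle e_1,\dots,e_{j-1},f_1\rangle$, and the extra condition $z_1 V_{k+1}\subset V_{k-1}$ then fails by a direct dimension count: $z_1 V_{k+1}$ contains the basis vector $e_{k-1}$, which is not in $V_{k-1}=\langle e_1,\dots,e_{k-2},f_1\rangle$. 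Carrying out this Jordan-basis contradiction uniformly across all non-adjacent pairs $(i,k)$ -- rather than handling each pattern of $i$ and $k$ separately -- is the technical heart of the argument.
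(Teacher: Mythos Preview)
Your proposal is correct and follows essentially the same route as the paper. Both arguments reduce to $1\le i,k\le m+1$ via rotation (you spell this out more explicitly in case (iii), while the paper does the conjugation by $r_m$ upfront for all cases at once), both handle $|i-k|=1$ via Reidemeister~0 (Proposition~\ref{circle}), and both settle the remaining case by showing $X_{1,i}\cap X_{1,k}=\emptyset$ from the explicit Jordan-basis description of $X_{1,i}\subset\mathcal{B}_{z_1}$. Two small differences worth noting: for $i=k$ you invoke Lemma~\ref{lemma-FG-splitting} directly, whereas the paper redoes the $j^*j_*$ computation in this specific case; and in case (iv) you cite transversality (Lemma~\ref{lemma-transverse}), but disjointness of supports already forces $j_{1,i}^*(j_{1,k})_*\mathcal{F}=0$, so transversality is not actually needed there.
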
 
\begin{proof} Firstly if $i=m+2$ or $k=m+2$ and $m>0$ we can conjugate $f_{m+2}^i \circ g_{m+2}^k$ by a power of $r_m$, so it is safe to assume that $1 \leq i,k \leq m+1$. From Section $2.3$, $\Psi(g_{m+2}^k) \underline{v} \simeq  (j_{1,k})_*(\mathcal{E}_k \otimes \mathcal{O}_{X_{1,k}})\simeq (j_{1,k})_*(\mathcal{E}_k)$, since $\pi_{1,k}$ maps $X_{1,k}$ to the point $S_0 \times_{\mathfrak{sl}_m} T^* \mathcal{B}_0$. Then
\begin{multline} \label{ext} 
\Psi(f_{m+2}^i \circ g_{m+2}^k) \underline{v} \simeq \Psi(f_{m+2}^i)((j_{1,k})_*(\mathcal{E}_k))  \simeq \\
\simeq (\pi_{1,i})_*[j_{1,i}^*(j_{1,k})_*((\mathcal{E}_k) \otimes \mathcal{E}_{i+1}^{-1})]  
\simeq (\pi_{1,i})_*[j_{1,i}^*(j_{1,k})_*(\mathcal{E}_k) \otimes \mathcal{E}_{i+1}^{-1}].
\end{multline}
Since the below conditions defining $X_{1,i}$ force $x=z_1$ (the standard nilpotent of type $(m+1,1)$ acting on the Jordan basis $\{e_1, \cdots, e_{m+1}, f_1\}$), $V_l = \langle e_1, \cdots, e_l \rangle$ for $1 \leq l \leq i-1$, and $V_l = \langle e_1, \cdots, e_{l-1}, f_1 \rangle$ for $l \geq  i+1$, we have: 
\begin{align*} 
X_{1,i} &= \{ (0 \subset V_1 \subset \cdots \subset V_{i-1} \subset V_i \subset V_{i+1} \subset \cdots \subset V_{m+2}), x | \\ 
& \qquad x \in S_1,\ x V_{i+1} \subset V_{i-1},\ xV_l \subset V_{l-1} \} = \mathbb{P}(V_{i+1}/V_{i-1}).
\end{align*} 
Thus the intersection $X_{1,i} \cap X_{1,i+1}$ consists of the single point $\{(0 \subset V_1 \subset \cdots \subset V_{m+2}),z_1\}$, where $V_l = \langle e_1, \cdots, e_l \rangle$ for $1 \leq l \leq i$, and $V_l = \langle e_1, \cdots, e_{l-1}, f_1 \rangle$ for $l \geq i+1$. Moreover, we have $X_{1,i} \cap X_{1,k} = \emptyset$ if $|i-k| > 1$.

When $i=k$, since $j_{1,i}: X_{1,i}\to U_1$ is an embedding of a divisor and by Lemma \ref{lemma-OU-of-X} we have 
$\mathcal{O}_{U_1}(X_{1,i})\simeq \mathcal{E}_i\otimes \mathcal{E}_{i+1}^{-1}$,
 there is an isomorphism $j_{1,i}^*(j_{1,i})_*\simeq \mathrm{id}\oplus \mathcal{E}_i^{-1}\otimes\mathcal{E}_{i+1}[1]$.
Together with \eqref{ext}, this  implies
\begin{align*}
\Psi(f_{m+2}^i \circ g_{m+2}^i) \underline{v} \simeq 
(\pi_{1,i})_*\left(\mathcal{E}_i\otimes \mathcal{E}_{i+1}^{-1} \oplus \mathcal{O}_{X_{1,i}}[1]\right)
\simeq \mathcal{O}_{U_0}[-1] \oplus \mathcal{O}_{U_0}[1] \simeq \Lambda.
\end{align*}

If $|i-k| > 1$, since $ j_{1,i}^* (j_{1,k})_* \mathcal{F} = 0$ for any $\mathcal{F} \in \text{Coh}(X_{1,i})$ (as $X_{1,i} \cap X_{1,k}=\emptyset$), we obtain $\Psi(f_{m+2}^i \circ g_{m+2}^k) \underline{v} = 0$.

If $|i-k|=1$, we have the Reidemeister 0 move $f_{m+2}^i\circ g_{m+2}^k\sim id$, which implies the statement of the lemma.
\end{proof} 
\begin{lemma} \label{linkgenm0} 
If $m=0$ then $\Psi(f_{2}^i \circ g_{2}^j) \underline{v} \simeq 
\begin{cases} 
\Lambda \; \mbox{ if   } i=j \\ 
\Lambda_0 \; \mbox{ otherwise} 
\end{cases}$ 
\end{lemma}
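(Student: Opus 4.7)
Split into the cases $i=j$ and $i\neq j$. Case $i=j$ reduces via rotation to $(1,1)$ and is handled by the same argument as the first case of Lemma \ref{linkgen}. Case $i\neq j$ reduces (using $f_2^2\sim f_2^1\circ r_2$ and the identity $r_2=s_2^2\circ t_2^1(2)$ from Lemma \ref{affrelns}) to computing $F_2^1\circ S_2^2\circ T_2^1(2)\circ G_2^1(\underline{v})$ by unfolding the spherical structure of $G_2^1$.

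\textbf{Case $i=j$.}
The identity $f_2^2\circ g_2^2\sim f_2^1\circ (r_2 r_2')\circ g_2^1 = f_2^1\circ g_2^1$ reduces the problem to $(i,j)=(1,1)$. Then $\Psi(f_2^1\circ g_2^1)\underline{v}$ is computed exactly as in the first case of Lemma \ref{linkgen}: the Koszul splitting $j_{1,1}^* j_{1,1*}\mathcal{E}_1 \simeq \mathcal{E}_1 \oplus \mathcal{E}_1\otimes\mathcal{N}^\vee[1]$ (with $\mathcal{N}=\mathcal{E}_2^{-1}\otimes\mathcal{E}_1|_{X_{1,1}}$, by Lemma \ref{lemma-OU-of-X}), followed by pushforward under $\pi_{1,1}:X_{1,1}\simeq\mathbb{P}^1\to U_0=\text{pt}$, yields $\mathbb{C}[-1]\oplus\mathbb{C}[1]=\Lambda$. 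The $m=0$ specialization only affects the base of $\pi_{1,1}$, not the structure of the calculation.

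\textbf{Case $i\neq j$.}
I focus on $(i,j)=(2,1)$; the case $(1,2)$ is analogous. The composition is $F_2^1 \circ S_2^2\circ T_2^1(2)\circ G_2^1(\underline{v})$. The key step is the identity $T_2^1(2)(G_2^1 Y)\simeq G_2^1 Y[1]$, which follows from applying the defining triangle $T_2^1(2)\to\text{id}\to G_2^1 (G_2^1)^L$ at $G_2^1 Y$ and using the splitting $(G_2^1)^L\circ G_2^1 \simeq \text{id}\oplus\text{id}[2]$ (derived from Lemma \ref{lemma-FG-splitting} via $(G_2^1)^L=F_2^1[1]$) together with the triangle identity, which forces the first component of the unit to be the identity. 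Applying this with $Y=\underline{v}$ gives $T_2^1(2)(j_{1,1*}\mathcal{E}_1)=j_{1,1*}\mathcal{E}_1[1]$; tensoring with $\mathcal{E}_2^{-1}$ gives $j_{1,1*}(\mathcal{E}_1\otimes\mathcal{E}_2^{-1})[1]$; finally $F_2^1$ is computed via the same Koszul/cohomology recipe, producing $\pi_{1,1*}(\mathcal{O}(-3)\oplus\mathcal{O}(-1)[1])[1] = \mathbb{C}^2[-1][1] = \mathbb{C}^2 = \Lambda_0$.

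\textbf{Main obstacle.}
The technical crux is the identification $T_2^1(2)(G_2^1 Y)\simeq G_2^1 Y[1]$, which requires carefully using the triangle identity for the $((G_2^1)^L \dashv G_2^1)$-adjunction to pin down the structure of the unit $G_2^1\underline{v}\to G_2^1 (G_2^1)^L G_2^1\underline{v}=G_2^1\underline{v}\oplus G_2^1\underline{v}[2]$. The case $(1,2)$ requires the parallel identification of the counit $G_2^1(G_2^1)^R(j_{1,1*}\mathcal{O})\to j_{1,1*}\mathcal{O}$ with the pushforward of the canonical Euler surjection $\mathcal{O}(-1)^{\oplus 2}\twoheadrightarrow\mathcal{O}$ on $\mathbb{P}^1$, giving $T_2^1(1)(j_{1,1*}\mathcal{O})=j_{1,1*}\mathcal{O}(-2)[1]$ and the same final answer. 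Once these structural identifications are in place, the remaining sheaf-theoretic computations are routine line-bundle cohomology on $\mathbb{P}^1$.
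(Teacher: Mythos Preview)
Your proposal is correct and shares the same core idea as the paper: both rely on the identity $T_2^1(2)\circ G_2^1\simeq G_2^1[1]$ (which you derive from the strongly spherical structure, and the paper simply asserts) together with line-bundle cohomology on $\mathbb{P}^1$. The organization differs, however. The paper begins by making the geometry fully explicit---$U_1=T^*\mathbb{P}^1$, $\iota:\mathbb{P}^1\hookrightarrow T^*\mathbb{P}^1$ the zero section, $\mathcal{E}_1\cong\mathcal{O}(-1)$, $\mathcal{E}_2\cong T\mathbb{P}^1(-1)$---then computes $G_2^1\underline{v}=\iota_*\mathcal{O}(-1)$ and $G_2^2\underline{v}=\iota_*T^*\mathbb{P}^1[1]$ once and for all, and reads off all four cases uniformly as $\mathrm{RHom}_{T^*\mathbb{P}^1}(G_2^i\underline{v},G_2^j\underline{v})[1]$ via adjunction and the single formula $\iota^*\iota_*\mathcal{O}\simeq\mathcal{O}\oplus\mathcal{O}(2)[1]$. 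This avoids your separate treatment of $(1,2)$ via $T_2^1(1)$ and the Euler sequence: once both $G_2^i\underline{v}$ are known as sheaves, the $(1,2)$ and $(2,1)$ cases are on exactly the same footing. Your route is perfectly valid, but the paper's explicit-geometry-first packaging is more economical and makes the ``main obstacle'' you flag essentially disappear, since the counit identification is absorbed into a single derived self-intersection formula rather than requiring a case-by-case analysis of adjunction maps.
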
 
\begin{proof}
In this case we have $z_1=0$, so $S_1=\mathcal{N}$, and $U_1=T^*\mathcal{B}_1=T^*\mathbb{P}^1=T^*\mathbb{P}(V)$, where $V$ is the standard representation of $\mathfrak{sl}_2$. Consequently, $\mathcal{B}_{z_1}$ is the image of the zero section $\iota:\mathbb{P}^1\to T^*\mathbb{P}^1$. 
Thus our categories are $\mathcal{D}_0=D^b(Vect)$ and 
$\mathcal{D}_1=D^b(\mathrm{Coh}_{\mathbb{P}^1}T^*\mathbb{P}^1)$.
Furthermore, we have $X_{1,1}=pt\times \iota(\mathbb{P}^1)\subset pt\times T^*\mathbb{P}^1=U_0\times U_1$. We can find the sheaves
$\mathcal{E}_1\cong\mathcal{O}(-1)$ and $\mathcal{E}_2\cong T\mathbb{P}^1(-1)$ and the functors
$G_2^1 =(-)\otimes\iota_*\mathcal{O}(-1)$ and $\mathcal{S}_2^2 =(-)\otimes \mathcal{E}_2^{-1}\cong (-)\otimes T^*\mathbb{P}^1(1)$. Then we can compute
$G_2^2 = S_2^2 T_2^1(2)G_2^1 \simeq S_2^2 G_2^1[1] \simeq (-)\otimes \iota_*T^*\mathbb{P}^1[1]$. Therefore
\begin{align*}
\Psi(f_{2}^1 \circ g_{2}^1) \underline{v} & \simeq \mathrm{RHom}_{T^*\mathbb{P}^1}(\iota_*\mathcal{O}(-1), \iota_*\mathcal{O}(-1))\simeq
 \mathrm{RHom}_{\mathbb{P}^1}(\iota^*\iota_*\mathcal{O}, \mathcal{O})\simeq \Lambda\\
\Psi(f_{2}^2 \circ g_{2}^1) \underline{v} & \simeq \mathrm{RHom}_{T^*\mathbb{P}^1}(\iota_*\mathcal{O}(-1), \iota_*T^*\mathbb{P}^1[1] )\simeq
 \mathrm{RHom}_{\mathbb{P}^1}(\iota^*\iota_*\mathcal{O}, \mathcal{O}(-1)[1])\simeq \Lambda_0\\
\Psi(f_{2}^1 \circ g_{2}^2) \underline{v} & \simeq \mathrm{RHom}_{T^*\mathbb{P}^1}(\iota_*T^*\mathbb{P}^1[1], \iota_*\mathcal{O}(-1) )\simeq
 \mathrm{RHom}_{\mathbb{P}^1}(\iota^*\iota_*\mathcal{O}, \mathcal{O}(1)[-1])\simeq \Lambda_0\\
\Psi(f_{2}^2 \circ g_{2}^2) \underline{v} & \simeq \mathrm{RHom}_{T^*\mathbb{P}^1}( \iota_*T^*\mathbb{P}^1[1],  \iota_*T^*\mathbb{P}^1[1])\simeq
 \mathrm{RHom}_{\mathbb{P}^1}(\iota^*\iota_*\mathcal{O}, \mathcal{O})\simeq \Lambda
\end{align*}
since $\iota^*\iota_*\mathcal{O}\simeq \mathcal{O}\oplus\mathcal{O}(2)[1]$.
\end{proof}
\begin{definition} Define an $m$-link as an affine crossingless $(m,m)$-tangle, where both the $m$ inner points, and the $m$ outer points are unlabelled. If each of the $m$ points in the inner circle are joined to $m$ points in the outer circle, say that the $m$-link is ``good" , and otherwise (i.e. if there are cups and caps) say that the $m$-link is ``bad". If an $m$-link $\gamma$ is good, denote by $\omega(\gamma)$ the number of loops in $\gamma$ that don't go around the origin, and by $\omega_0(\gamma)$ the number of loops that do. If an $m$-link $\gamma$ is bad, then define $\omega(\gamma) = -1$. \end{definition} Continuing on from Example, we have that:
\begin{example} 
\begin{align*} \omega_0(\alpha, \beta) &= \omega(\alpha, \beta) = 1 \\
\omega(\gamma, \delta) &= \omega(\gamma, \epsilon) = -1,  \omega(\delta, \epsilon) = 1 \end{align*} Note that if $m=0$ the link is always good, and if the link $\gamma$ is good with $m>0$, then $\omega_0(\gamma)=0$. In the former (resp. latter) case, a good $m$-link $\gamma$ is determined up to an isotopy by the the numbers $\omega(\gamma)$ and $\omega_0(\gamma)$ (resp. the numbers $\omega(\gamma)$). \end{example} Given unlabelled affine crossingless $(m,m+2n)$ matchings $\alpha,  \beta$, we can construct an $m$-link $\check{\alpha} \circ \beta$; furthermore, any $m$-link $\gamma$ corresponds to a functor $\Psi(\gamma): \mathcal{D}_0 \rightarrow \mathcal{D}_0$.  It follows from Lemma \ref{linkgen} that: 
\begin{proposition} 
If the $m$-link $\gamma$ is bad, then $\Psi(\gamma)$ is zero. If the $m$-link $\gamma$ is good, then $\Psi(\gamma)$ is isomorphic to tensor multiplication by $\Lambda^{\otimes \omega(\gamma)}\otimes\Lambda_0^{\otimes\omega_0(\gamma)}$. 
\end{proposition}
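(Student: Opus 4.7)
The plan is to prove the proposition by induction on the number of cups (equivalently, caps) in a generator decomposition of $\gamma$, reducing to the base cases computed in Lemmas \ref{linkgen} and \ref{linkgenm0}. A key simplification is that $\mathcal{D}_0 \simeq D^b(\text{Vect})$: indeed $U_0$ is a point since for $n=0$ the Mirkovic--Vybornov slice $S_0$ to the regular nilpotent $z_0$ meets $\mathcal{N}_0$ only at $z_0$, forcing $\mathcal{B}_{z_0}$ and $U_0$ to coincide with this single point. Consequently every exact endofunctor of $\mathcal{D}_0$ is of the form $(-)\otimes V$ for some $V\in D^b(\text{Vect})$ and is determined by its value on $\underline{v}$, and for any two $m$-links $\gamma_1,\gamma_2$ we have $\Psi(\gamma_1\circ\gamma_2)\underline{v}\simeq\Psi(\gamma_1)\underline{v}\otimes\Psi(\gamma_2)\underline{v}$. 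In particular, to prove $\Psi(\gamma)\simeq 0$ it suffices to exhibit a factorization $\gamma=\gamma_1\circ\gamma'\circ\gamma_2$ of $m$-links with $\Psi(\gamma')\underline{v}\simeq 0$.

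In the base case of no cups or caps, $\gamma$ is isotopic to a power of $r_m$, and since $\Psi(r_m)\simeq\mathrm{id}$ we obtain $\Psi(\gamma)\underline{v}\simeq\underline{v}$, consistent with $\omega(\gamma)=\omega_0(\gamma)=0$. For the good case with at least one closed loop $C$, I would isotope $\gamma$ to isolate $C$ as a small circle that factors off as an independent $m$-link: if $C$ is contractible, realize it as $f_{m+2}^i\circ g_{m+2}^i$ for a suitable $i$; if $C$ winds about the origin (which by the preceding Example forces $m=0$), realize it as $f_2^i\circ g_2^j$ with $i\ne j$. By Lemmas \ref{linkgen} and \ref{linkgenm0} respectively, these factors contribute tensor factors of $\Lambda$ and $\Lambda_0$. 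Peeling off loops one at a time and applying the tensor decomposition of $\Psi$ on compositions of $m$-links yields $\Psi(\gamma)\underline{v}\simeq\Lambda^{\otimes\omega(\gamma)}\otimes\Lambda_0^{\otimes\omega_0(\gamma)}$.

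For the bad case, the strategy is to exhibit $\gamma$ as a composition $\gamma=\delta_1\circ(f_{m+2}^i\circ g_{m+2}^j)\circ\delta_2$ of $m$-links in which the middle factor satisfies $|i-j|>1$ and $\{i,j\}\neq\{1,m+2\}$; Lemma \ref{linkgen} then gives $\Psi(f_{m+2}^i\circ g_{m+2}^j)\underline{v}\simeq 0$, so by the tensor decomposition $\Psi(\gamma)\underline{v}\simeq 0$. To produce such a factorization I would pick an innermost inner-inner arc $\alpha$ in $\gamma$ (one whose bounded disk region contains no other arc endpoints) together with the outer-outer arc $\beta$ that pairs with it, then shrink $\alpha$ to a small cap near the inner boundary and $\beta$ to a small cup near the outer boundary. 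A rotation conjugation then aligns the cap and cup into the form of a single middle factor, and the disjointness of $\alpha$ from $\beta$, which is precisely the ``bad'' condition, forces the non-adjacency $|i-j|>1$ and $\{i,j\}\neq\{1,m+2\}$.

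The main technical obstacle is the bad case: one must verify that every bad $m$-link admits such a decomposition within $\textbf{AFTan}_m$ (so that it stays at $\geq m$ strands throughout), even when multiple inner-inner arcs are present. The idea is to peel off innermost inner-inner and outer-outer arc pairs one at a time, using the cap-cap and cup-cup isotopies from Section \ref{section-tangles} to manage the cyclic arrangement of arcs around the two boundary circles; the bookkeeping amounts to a planar/annular topology argument on crossingless matchings.
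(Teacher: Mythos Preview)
Your proposal is correct and follows essentially the same strategy as the paper. The paper's own proof is terser: it observes that any $m$-link can be written as a composition of ``basic'' $m$-links $f_{m+2}^i\circ g_{m+2}^j$, and asserts that $\gamma$ is bad if and only if every such decomposition contains at least one bad basic factor, then reads off the result from Lemmas~\ref{linkgen} and~\ref{linkgenm0}; your argument unpacks the topological bookkeeping (peeling off loops in the good case, isolating a bad basic factor in the bad case) that underlies this assertion. One small comment: your phrase ``the outer-outer arc $\beta$ that pairs with it'' is imprecise, since there is no canonical pairing between caps and cups in a bad $m$-link; but this is exactly the level of detail the paper itself suppresses, and the claim that some decomposition into basics must contain a bad factor is correct.
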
 
\begin{proof} The $m$-link $f_{m+2}^i \circ g_{m+2}^i$ is good, with $\omega(f_{m+2}^i \circ g_{m+2}^i)=1$; and $\Psi(f_{m+2}^i \circ g_{m+2}^i)$ corresponds to multiplication by $\Lambda$. If $|i-j|=1$, the $m$-link $f_{m+2}^i \circ g_{m+2}^j$ is good, with $\omega(f_{m+2}^i \circ g_{m+2}^j)=0$; and $\Psi(f_{m+2}^i \circ g_{m+2}^j)$ is the identity functor $\mathcal{D}_0 \rightarrow \mathcal{D}_0$. If $|i-j|>1$, the $m$-link $f_{m+2}^i \circ g_{m+2}^i$ is bad, and $\Psi(f_{m+2}^i \circ g_{m+2}^i) \underline{v}=0$ for any $w \in \mathcal{D}_0$. Call an $m$-link ``basic" if it is of the form $f_{m+2}^i \circ g_{m+2}^j$; then any $m$-link $\gamma$ can be written as a composition of basic $m$-links. The conclusion then follows from our knowledge of $\Psi(\gamma)$ for basic $m$-links $\gamma$, and the fact that $m$-link $\gamma$ is bad iff each expression of $\gamma$ in terms of basic $m$-links contains at least one bad basic $m$-link. \end{proof}
Note that the $\mathbb{Z}$-graded algebra $\text{Ext}^{\bullet}(\Psi_{{g}_{m+2}^i})=\Lambda[-1]$ is isomorphic to $\mathbb{C}[x]/(x^2)$, where $x$ has degree $2$. Indeed, this is the only possible algebra structure on $\Lambda[-1]$ which respects its grading. Now, to simplify the statement of the following result, let us adopt the convention that $\Lambda^{\otimes -1}$ is the zero complex. Thus:

\begin{theorem} \label{final} For any $\alpha, \beta\in \text{Cross}(m,n)$ we have an isomorphism of vector spaces: \begin{align*} \text{Ext}^{\bullet}(\Psi_{\alpha}, \Psi_{\beta}) \simeq \Lambda^{\otimes \omega(\check{\alpha} \circ \beta)}\otimes\Lambda_0^{\otimes\omega_0(\check{\alpha} \circ \beta)}[-n] \end{align*} \end{theorem}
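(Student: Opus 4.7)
The plan is to observe that nearly all of the substantive work has already been done before the theorem statement. The adjunction formula $\Psi(\alpha)^R \simeq \Psi(\breve{\alpha})[-n]$ (specialized from the lemma just above to $\alpha$ of type $(m, m+2n)$, so $p-q = -n$), together with the fact that $\underline{v}$ is the monoidal unit in $\mathcal{D}_0 \simeq D^b(\text{Vect})$ and that $\Psi$ is functorial under composition, gives the reduction
$$\text{Ext}^{\bullet}(\Psi_{\alpha}, \Psi_{\beta}) \simeq \text{Ext}^{\bullet}(\underline{v}, \Psi(\breve{\alpha}\circ\beta)\underline{v}\,[-n]) \simeq \Psi(\breve{\alpha}\circ\beta)\underline{v}\,[-n],$$
so it suffices to evaluate $\Psi(\breve{\alpha}\circ\beta)\underline{v}$.

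First I would check that $\breve{\alpha}\circ\beta$ is an $m$-link in the sense of the preceding definition. Since $\alpha$ and $\beta$ are unlabelled crossingless $(m, m+2n)$-matchings, the tangle $\breve{\alpha}$ is a crossingless $(m+2n, m)$-tangle, and concatenating with $\beta$ yields an unlabelled affine $(m,m)$-tangle without crossings; hence the invariants $\omega(\breve{\alpha}\circ\beta)$ and $\omega_0(\breve{\alpha}\circ\beta)$ are well-defined. Then I would invoke the Proposition immediately preceding the theorem: if $\breve{\alpha}\circ\beta$ is good, the functor $\Psi(\breve{\alpha}\circ\beta)$ is tensor multiplication by $\Lambda^{\otimes\omega(\breve{\alpha}\circ\beta)}\otimes\Lambda_0^{\otimes\omega_0(\breve{\alpha}\circ\beta)}$, so applying it to $\underline{v}$ and shifting by $[-n]$ yields the claimed formula. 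If $\breve{\alpha}\circ\beta$ is bad, the Proposition gives $\Psi(\breve{\alpha}\circ\beta)\underline{v} = 0$, which agrees with the right-hand side under the paper's convention $\Lambda^{\otimes -1} = 0$ combined with the definition $\omega(\gamma) = -1$ for bad $\gamma$.

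There is essentially no obstacle at this stage: the theorem is a consolidation of the Proposition and the adjunction reduction. If any care is needed, it is in bookkeeping the shift $[-n]$ (which arises as $p-q$ with $p=0$, $q=n$ in the right-adjoint formula) and in verifying that the bad case is accurately handled by the $\Lambda^{\otimes-1}=0$ convention so that the single formula covers both cases uniformly. The actual content — the adjunction for $\Psi(\alpha)$ (propagated from the cup/cap adjunctions of Section 4 via the decomposition of a tangle into generators) and the computation of $\Psi(\gamma)\underline{v}$ for basic $m$-links in Lemmas \ref{linkgen} and \ref{linkgenm0} — is already in place.
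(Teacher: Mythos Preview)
Your proposal is correct and matches the paper's approach exactly: the paper does not give a separate proof of Theorem~\ref{final}, as it is the immediate consolidation of the adjunction reduction $\text{Ext}^{\bullet}(\Psi_{\alpha}, \Psi_{\beta}) \cong \Psi(\breve{\alpha} \circ \beta)\underline{v}\,[-n]$ displayed earlier with the preceding Proposition evaluating $\Psi(\gamma)\underline{v}$ for an $m$-link $\gamma$, together with the convention $\Lambda^{\otimes -1}=0$ to absorb the bad case. Your bookkeeping of the shift and your handling of the good/bad dichotomy are both accurate.
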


\subsection{The $\text{Ext}$ algebra: conjecture}
We expect the isomorphism in Theorem \ref{final} to be canonical, which allows us to describe the multiplication in $\text{Ext}$ algebra \eqref{equation-Ext-algebra} explicitly. The resulting Ext algebra is an annular version of Khovanov's arc algebra; we will refer to them as ``annular arc algebras''. 

It suffices to describe the map 
\begin{align*} \text{Ext}^{\bullet}(\Psi_{\alpha}, \Psi_{\beta}) \otimes \text{Ext}^{\bullet}(\Psi_{\beta}, \Psi_{\gamma}) &\rightarrow \text{Ext}^{\bullet}(\Psi_{\alpha}, \Psi_{\gamma}) \end{align*} In the $m=0$ case, we obtain a map: \begin{align*}  \Lambda^{\omega(\check{\alpha} \circ \beta)}\otimes\Lambda_0^{\otimes\omega_0(\check{\alpha} \circ \beta)}\otimes \Lambda^{\omega(\check{\beta} \circ \gamma)}\otimes\Lambda_0^{\otimes\omega_0(\check{\beta} \circ \gamma)}[-2n] &\rightarrow \Lambda^{\omega(\check{\alpha} \circ \gamma)}\otimes\Lambda_0^{\otimes\omega_0(\check{\alpha} \circ \gamma)}[-n] \end{align*} In the $m>0$ case, we obtain a map: 
\begin{align*}  \Lambda^{\omega(\check{\alpha} \circ \beta)}\otimes \Lambda^{\omega(\check{\beta} \circ \gamma)}[-2n] &\rightarrow \Lambda^{\omega(\check{\alpha} \circ \gamma)} [-n] \end{align*} Our conjectural description of this map is as follows.  Consider a sequence of links, the first of which is a disjoint union of $\check{\alpha} \circ \beta$ and $\check{\beta} \circ \gamma$, and the last of which is $\check{\alpha} \circ {\gamma}$. Each subsequent link is obtained from the previous link by performing a ``surgery'': pick two strands between $i$ and $j$ in $\beta \circ \check{\beta}$, and replace them with two radial lines at $i$ and $j$, so that the resulting tangle has no crossings. It is clear that at each step, there is at least one way of performing a surgery operation. Depending on whether the strands between $i$ and $j$ are part of a circle (not enclosing the origin), a $0$-circle (enclosing the origin, in the $m=0$ case),  or a a line/cup/cap (in the $m>0$ case), then diagrammatically each surgery operation corresponds to one of the following. To write down a corresponding map between the $\text{Ext}$ spaces, we introduce bases $1\in\mathbb{C}$, $\{1,X \}$ in $\Lambda$, where $1$ has grading $-1$ and $X$ has grading $1$, and an arbitrary basis $\{Y_1,Y_2\}$ in $\Lambda_0$.

\textbf{\underline{Case where $m=0$}:}

\begin{itemize} \item Merging two disjoint circles into one circle: $\Lambda \otimes \Lambda[-2] \rightarrow \Lambda[-1]$,
$$1\otimes 1\mapsto 1,\quad 1\otimes X\mapsto X,\quad X\otimes 1\mapsto X,\quad X\otimes X\mapsto 0$$
\item Merging two nested circles into one circle: $\Lambda \otimes \Lambda[-2] \rightarrow \Lambda[-1]$,
$$1\otimes 1\mapsto 1,\quad 1\otimes X\mapsto -X,\quad X\otimes 1\mapsto X,\quad X\otimes X\mapsto 0$$ 
\item Splitting one circle into two disjoint circles: $\Lambda[-1] \rightarrow \Lambda \otimes \Lambda[-2]$, $1\mapsto 1\otimes X+X\otimes 1$, $X\mapsto X\otimes X$;
\item Splitting one circle into two nested circles: $\Lambda[-1] \rightarrow \Lambda \otimes \Lambda[-2]$, $1\mapsto -1\otimes X+X\otimes 1$, $X\mapsto X\otimes X$;
\item Merging a circle and a $0$-circle: $\Lambda\otimes\Lambda_0[-2]\to\Lambda_0[-1]$, 
$1\otimes Y_i\mapsto Y_i$, $X\otimes Y_i\mapsto 0$;
\item Splitting a $0$-circle into a circle and a $0$-circle: $\Lambda_0[-1]\to\Lambda\otimes\Lambda_0[-2]$,
$Y_i\mapsto X\otimes Y_i$
\item Merging two 0-circles into a circle: $$\Lambda_0 \otimes \Lambda_0 \to \Lambda, (a_1 Y_1 + a_2 Y_2) \otimes (b_1 Y_1 + b_2 Y_2) = (a_1 b_2 - a_2 b_1) X$$
\item Splitting a circle into two 0-circles: $\Lambda \rightarrow \Lambda_0 \otimes \Lambda_0$, $X \mapsto 0$, $1 \mapsto Y_1 \otimes Y_2 - Y_2 \otimes Y_1$
\end{itemize}

\textbf{\underline{Case where $m>0$}:}
\begin{itemize} \item Merging a line and a circle, into a line: $\mathbb{C} \otimes \Lambda[-1] \rightarrow \mathbb{C}$, $1\otimes 1\mapsto 1$, $1\otimes X\mapsto 0$ 
\item Splitting a line, into a line and a circle: $\mathbb{C}[-1] \rightarrow \mathbb{C} \otimes \Lambda$, $1\mapsto 1\otimes X$ \item Merging two circles into two disjoint circles (or into two nested circles); splitting one circle into two disjoint circles (or two nested circles). These are exactly the same as in the $m=0$ case
\item Merging two lines into two arcs: $\underline{\mathbb{C}} \rightarrow 0$
\item Merging two arcs into two lines: $0 \rightarrow \underline{\mathbb{C}}$
\item Merging a line and an arc, into (a different) line and an arc; Merging a circle and an arc, into an arc; splitting an arc, into a circle and an arc: $0 \rightarrow 0$
\item Merging two lines into two lines: $\underline{\mathbb{C}} \rightarrow \underline{\mathbb{C}}$ 
\end{itemize}

By iteratively applying these ``surgery'' operations (so that all the cups and caps in $\beta$ and $\check{\beta}$ are replaced by radial lines), we arrive at the desired map. 

\section{Further directions}
\subsection{Decategorification}

Denote by $V$ the $2$-dimensional representation of $\mathfrak{sl}_2$. In section $6$ (see Theorem $6.2$ and Section $6.4$) of \cite{ck}, Cautis and Kamnitzer prove that: $$ K^0(\text{Coh}(Y_{m+2n})) \simeq V^{\otimes m+2n}$$ 
Recall also that there is a map, which is compatible under composition (see Section $6.1$ of \cite{ck} for an explicit description). $$\psi: \{ (k, l) \text{-tangles} \} \rightarrow \text{Hom}_{U(\mathfrak{sl}_2)}(V^{\otimes k}, V^{\otimes l})$$
In Section 6 of \cite{ck}, it is proven that for each $(m+2p, m+2q)$-tangle $\alpha$, the functor $\widetilde{\Psi}(\alpha)$ corresponds to $\psi(\alpha)$ on the level of the Grothendieck group.  In fact, Cautis and Kamnitzer work with a $q$-deformation of this picture (using $\mathbb{C}^*$-equivariant sheaves, and representations of $U_{q}(\mathfrak{sl}_2)$); and it is also possible to introduce $\mathbb{C}^*$-equivariance in our setting.

Under the natural embedding $U_n \rightarrow Y_{m+2n}$ (constructed in Section $2.1$), we have a natural map: $$ K^0(\mathcal{D}_n^0) = K^0(\text{Coh}_{\mathcal{B}_{z_n}}(U_n)) \rightarrow K^0(\text{Coh}(Y_{m+2n})) \simeq V^{\otimes m+2n} $$ 

In future work, we will show that $K^0(\mathcal{D}_n^0)$ can be naturally identified with the $m$-weight space in $V^{\otimes m+2n}$ (which we shall denote by $V^{\otimes m+2n}_{[m]}$). After taking the image of the maps $\Psi(\alpha)$ in the Grothendieck group (where $\alpha$ is an affine tangle), we obtain: $$ \hat{\psi}: \{ (m+2k, m+2l) \text{-affine tangles} \} \rightarrow \text{Hom}(V^{\otimes m+2k}_{[m]}, V^{\otimes m+2l}_{[m]}) $$ 

We will give an explicit description of the map $\hat{\psi}$ (this almost follows Cautis and Kamnitzer's results in Section 6 of \cite{ck}), and compute the images of the irreducible objects $\Psi(\alpha)$ in $\mathcal{D}_n^0$. These give us a basis in the $(m+n,n)$ weight space in $V^{\otimes m+2n}$; we expect that this will coincide with Lusztig's canonical (or perhaps the dual canonical) basis.

\subsection{Applications to modular representation theory}

Recall, from the introduction that, Theorem $5.3.1$ from \cite{bmr} (see also Section $1.6.2$ from \cite{bm}) states that there is an equivalence: \begin{align*} D^b(\text{Coh}_{\mathcal{B}_{e, \textbf{k}}}(\widetilde{\mathfrak{g}}_{\textbf{k}})) \simeq D^b(\text{Mod}^{fg, \lambda}_{e}(U_{\textbf{k}})) \end{align*} Further, the tautological t-structure on the right hand side corresponds to the exotic t-structure on the left hand side.  Thus, by studying the irreducible objects in the heart of the exotic $t$-structure on the other side, one may derive information about irreducible objects in $\text{Mod}^{fg, \lambda}_{e}(U_{\textbf{k}})$. In the case where $e$ is a two-block nilpotent, our results give a fairly explicit description of the irreducible objects in the former category (by repeatedly applying the functors $G_{m+2n}^i$). In ongoing work, we will give combinatorial formulae for the dimensions and characters of irreducible representations lying in $\text{Mod}^{fg, \lambda}_{e}(U_{\textbf{k}})$. 

More precisely, the dimension of the modules should be related to computing the Euler characteristic of the corresponding exotic sheaves (after tensoring by a line bundle); and the characters should correspond to computing the Euler characteristic in the equivariant category (where the group acting is a maximal torus inside the centralizer of the nilpotent). Computing these Euler characteristics is related to computing the image of the irreducible objects in the Grothendieck group (the problem discussed in the previous section). 

In the last subsection, we gave a (partly conjectural) diagrammatic description of the $\text{Ext}$ algebra which governs the heart of the exotic $t$-structure; i.e. category $\text{Mod}^{fg, \lambda}_{e}(U_{\textbf{k}})$ when we work over a field $\textbf{k}$ of characteristic $p$. It is known that these categories are Koszul; thus, the category $\text{Mod}^{fg, \lambda}_{e}(U_{\textbf{k}})$ is governed by the Koszul dual of this diagram algebra. In future work, we will show that the Koszul dual is an annular version of the algebras introduced by Webster in \cite{webster}. This may be viewed as a characteristic $p$ analogue of some of the main results proven by Brundan and Stroppel in \cite{bs} (a description of the diagrammatic algebra which controls the principal block of parabolic category $\mathcal{O}$, for the parabolic with Levi sub-algebra $\mathfrak{gl}_m \oplus \mathfrak{gl}_n$ in $\mathfrak{gl}_{m+n}$), and Webster in \cite{webster} (a description of the diagrammatic algebra which controls the singular block of category $\mathcal{O}$ that is Koszul dual to the one described above). As a consequence, we would then obtain combinatorial formulae for the composition multiplicities of the irreducibles inside the indecomposable projectives in $\text{Mod}^{fg, \lambda}_{e}(U_{\textbf{k}})$. 

\subsection{A characteristic $p$ analogue of Bernstein-Frenkel-Khovanov, and annular Khovanov homology} From the discussion in the above subsections, we have constructed a map: \begin{equation} \label{reshtur} \hat{\psi}: \{ (m+2k, m+2l) \text{-affine tangles} \} \rightarrow \text{Hom}(V^{\otimes m+2k}_{[m]}, V^{\otimes m+2l}_{[m]}) \end{equation} This map is categorified by the functors $\Psi(\alpha): \mathcal{D}_p \rightarrow \mathcal{D}_q$ between categories of coherent sheaves on Springer fibers. 

Let us restrict to the case of linear tangles. In \cite{khov1} and \cite{khov2}, Khovanov and Chen construct a categorification of the invariant $\psi(\alpha): V^{\otimes m} \rightarrow V^{\otimes n}$ using categories of modules over certain diagram algebras; the functors which categorify the action of the generators $g_n^i, f_n^i$ and $t_n^i(1), (2)$ correspond to tensoring with certain (complexes of) bi-modules. The diagram algebras appearing here are similar in nature to the Ext algebras used in their paper; however, the crossingless matchings that appear in our set-up are drawn on a line (instead of a circle). The categorification constructed in \cite{khov2} has been shown to be equivalent to the one constructed earlier by Bernstein, Frenkel and Khovanov in \cite{bfk}, using parabolic blocks of category $\mathcal{O}$ (for the parabolic with Levi sub-algebra $\mathfrak{gl}_m \oplus \mathfrak{gl}_n$ in $\mathfrak{gl}_{m+n}$). 

In future work, we will give a characteristic $p$ analogue of this construction (working with affine tangles instead of linear tangles). The categories $\mathcal{D}_p$ above may be re-expressed as (derived) module categories over these annular arc algebras; this will give an annular anlogue of the main result in \cite{khov1}, with the functors categorifying the invariants for the tangles $g_n^i, f_n^i, t_n^i(1), (2)$ and $r_n$ corresponding to tensoring with certain (complexes of) bi-modules. Further, the categories $\mathcal{D}_p$ may be re-expressed as (derived) categories of $\text{Mod}^{fg, \lambda}_{e}(U_{\textbf{k}})$. Then, as in \cite{bfk}, the functors categorifying the invariants for the tangles $g_n^i, f_n^i, t_n^i(1), (2)$ can be expressed using certain equivalences between different blocks of representation categories, due to Enright-Shelton; this will give a positive characteristic analogue of the aforementioned categorification result from \cite{bfk}. While the construction in \cite{bfk} naturally gave rise to Khovanov homology, our construction will give rise to annular Khovanov homology (Grigsby, Licata and Wehrli in \cite{glw}). 

\bibliographystyle{amsalpha}

\vspace{1cm}

\end{document}